\documentclass{amsart}
\usepackage{amsfonts}
\usepackage{amssymb}
\usepackage{amsmath}

\newtheorem{theorem}{Theorem}[section]
\theoremstyle{plain}

\newtheorem{corollary}[theorem]{Corollary}

\newtheorem{lemma}{Lemma}[section]

\newtheorem{remark}{Remark}[section]

\numberwithin{equation}{section}

\begin{document}
\title[$p$-ADIC PROPERTIES OF TRANSLATED DIVISION\ POLYNOMIALS]{$p$-ADIC
PROPERTIES OF TRANSLATED DIVISION\ POLYNOMIALS and SOMOS\ SEQUENCES}
\author{BET\"{U}L GEZER, OSMAN\ B\.{I}Z\.{I}M}
\address{Bursa Uludag University, Faculty of Science, Department of
Mathematics, G\"{o}r\"{u}kle, 16059, Bursa-TURKEY}
\email{betulgezer@uludag.edu.tr, obizim@uludag.edu.tr}
\date{13. 07. 2026.}
\subjclass{14H52, 11G07, 11B37.}
\keywords{Elliptic curves, translated division polynomials, Somos sequences.}
\maketitle

\begin{abstract}
In this paper we consider the sequences $(\Psi _{n}(\mathbf{P}))_{n\geq 0}$,
$(\Phi _{n}(\mathbf{P}))_{n\geq 0}$ and $(\overline{\Omega }\,_{n}(\mathbf{P}%
))_{n\geq 0}$ of values of the translated division polynomials of an
elliptic curve $E/K$ evaluated at a point $\mathbf{P}\in $ $E(K)^{2}$. We
prove that these sequences are purely periodic when $K$ is a finite field.
Then we use the periodicity properties of these sequences to prove that
certain subsequences of these sequences are $%
\mathbb{Z}
_{p}$-Cauchy. Finally, we use this result to prove analogous results for
Somos $4$ and Somos $5$ sequences.
\end{abstract}

\tableofcontents

\section{Introduction}

Let $E/K$ denote an elliptic curve defined over a field $K$ given by a
Weierstrass model $f(x,y)=0$ where
\begin{equation}
f(x,y)=y^{2}+a_{1}xy+a_{3}y-x^{3}-a_{2}x^{2}-a_{4}x-a_{6}\text{.}  \label{1}
\end{equation}%
For background on elliptic curves, see \cite{JS1} and \cite{JS2}. Let $E(K)$
denote the group of $K$-rational points on the elliptic curve $E$ together
with the point at infinity $\mathcal{O}$ as the identity element. Assume
that $\mathbf{P}=(P_{1},P_{2})\in $ $E(K)^{2}$ and that $[n]P_{1}+P_{2}\neq
\mathcal{O}$ for all $n\in
\mathbb{Z}
$ and write
\begin{equation}
\lbrack n]P_{1}+P_{2}=\left( \frac{\Phi _{n}(\mathbf{P})}{\Psi _{n}(\mathbf{P%
})^{2}}\text{, }\frac{\overline{\Omega }\,_{n}(\mathbf{P})}{\Psi _{n}(%
\mathbf{P})^{3}}\right)  \label{2}
\end{equation}%
for "polynomials"$\ \Phi _{n}$, $\Psi _{n}$, $\overline{\Omega }\,_{n}\in
K[x_{i},y_{i}][(x_{1}-x_{2})^{-1}]/\left\langle f(x_{i},y_{i})\right\rangle $%
, $i=1,2$. If $P_{2}=\mathcal{O}$ the point at infinity, then $\mathbf{P}%
=(P_{1},\mathcal{O})=P$ and the polynomial $\Psi _{n}(P)$ is called $n$-%
\textit{division polynomial }associated to\textit{\ }the elliptic curve%
\textit{\ }$E$ and the point $P$\textit{\ }(see \cite[Exercise 3.7]{JS1} and
\cite[Chapter 2]{SL} for basic properties on division polynomials). Division
polynomials play crucial roles in the theory of elliptic functions, in the
theory of elliptic curves and the theory of elliptic divisibility sequences.
An \textit{elliptic sequence} is a sequence $(W(n))_{n\geq 0}$ of integers
satisfying the recurrence relation%
\begin{equation*}
W(m+n)W(m-n)=W(m+1)W(m-1)W(n)^{2}-W(n+1)W(n-1)W(m)^{2}
\end{equation*}%
for all $m\geq n\geq 1$. An \textit{elliptic divisibility sequence} (EDS) is
an integer elliptic sequence $(W(n))_{n\geq 0}$ satisfying
\begin{equation*}
n|m\Rightarrow W(n)|W(m)\text{.}
\end{equation*}%
An EDS is called proper if $W(0)=0$, $W(1)=1$, and $W(2)W(3)\neq 0$. The
arithmetic properties of EDSs were first studied by Ward \cite{MW1, MW2}, in
1948. Ward showed that a proper EDS is associated to an elliptic curve $E$
and a point $P$ $\in $ $E(%
\mathbb{Q}
)$, and proved that the values of $W(n)$ are closely related to the values
of the division polynomials $\psi _{n}(P)$. See \cite{MW1}, \cite{MW2}, \cite%
{JS}, and \cite{GE1} for more details on EDSs.

Stange \cite{SKT,SK} generalized the notion of elliptic sequences to
multidimensional arrays, called elliptic nets. She showed that elliptic nets
are related to elliptic curves: let $K$ be a field, let $%
S=K[x_{1},y_{1},...,x_{m},y_{m}]$, and let%
\begin{equation}
R_{m}=K[x_{i},y_{i}]_{1\leq i\leq m}[(x_{i}-x_{j})^{-1}]_{1\leq i<j\leq
m}/\left\langle f(x_{i},y_{i})\right\rangle _{_{1\leq i\leq m}}\text{,}
\end{equation}%
where $f$ is the polynomial (\ref{1}). Let $\mathbf{P}=(P_{1},...,P_{m})\in $
$E(K)^{m}$ and let $\mathbf{n}=(n_{1},...,n_{m})\in $ $\mathbb{Z}^{m}$. Then
the point $\mathbf{nP}$ is given by%
\begin{equation}
\mathbf{nP}=[n_{1}]P_{1}+...+[n_{m}]P_{m}=\left( \frac{\Phi _{\mathbf{n}}(%
\mathbf{P})}{\Psi _{\mathbf{n}}(\mathbf{P})^{2}}\text{, }\frac{\overline{%
\Omega }\,_{\mathbf{n}}(\mathbf{P})}{\Psi _{\mathbf{n}}(\mathbf{P})^{3}}%
\right)
\end{equation}%
for net "polynomials" $\ \Phi _{\mathbf{n}}$, $\Psi _{\mathbf{n}}$, $%
\overline{\Omega }\,\,_{\mathbf{n}}\in R_{m}$. The polynomial $\Psi _{%
\mathbf{n}}$ is the called $\mathbf{n}$\textit{-net polynomial }associated
to elliptic curve $E$ and the point $\mathbf{P}$, and the map $\mathbf{%
n\mapsto }\Psi _{\mathbf{n}}(\mathbf{P})$ is said to the be \textit{elliptic
net associated} to $E$ and $\mathbf{P}$. See \cite[Section 4]{SK}, for more
details. In \cite{SKT}, Stange also considered the periodicity properties of
elliptic nets $\Psi _{\mathbf{n}}(\mathbf{P})$ of values of the net
polynomials evaluated at a point $\mathbf{P}\in $ $E(K)^{m}$ and proved that
the elliptic nets demonstrate certain partial periodicity properties, which
were named \textit{symmetry properties} after Ward. See also \cite{AK}, for
symmetries of elliptic nets and valuations of elliptic net polynomials. Ayad
\cite{MA} and Swart \cite{CS} used elementary methods to show that symmetry
properties hold in the case of sequences. Silverman \cite[Theorem 8]{JS3}
used a lift to characteristic zero and the Lefschetz principle to prove that
the sequence $(\Psi _{n}(P))_{n\geq 0}$ over finite fields is a purely
periodic sequence. He also considered the convergence properties of the
sequence $(\Psi _{n}(P))_{n\geq 0}$ when $K$ is a complete local field and
showed that certain subsequences of the sequence converge in $%
\mathbb{Z}
_{p}$. In \cite{GB}, the authors considered the sequences of values of the
division polynomials $(\Phi _{n}(P))_{n\geq 0}$ and $(\overline{\Omega }%
_{n}(P))_{n\geq 0}$. We used the appoach in \cite[Theorem 8]{JS3} to show
that these sequences are periodic over finite fields and proved that certain
subsequences of these sequences are $%
\mathbb{Z}
_{p}$-Cauchy.

Motivated by the above, we consider analogous problems for the sequences of
values of the translated division polynomials $(\Psi _{n}(\mathbf{P}%
))_{n\geq 0}$, $(\Phi _{n}(\mathbf{P}))_{n\geq 0}$ and $(\overline{\Omega }%
\,_{n}(\mathbf{P}))_{n\geq 0}$ of an elliptic curve $E$ at a point $\mathbf{P%
}=(P_{1},P_{2})\in $ $E(K)^{2}$. In \cite{CS}, Swart introduced the term
\textit{translated elliptic divisibility sequence} for the sequence $(\Psi
_{n}(\mathbf{P}))_{n\geq 0}$, and used explicit formulas to prove this
sequence is purely periodic over finite fields. Here, we will use techniques
similar to those of Silverman \cite[Theorem 8]{JS3} to show that the
sequence $(\Psi _{n}(\mathbf{P}))_{n\geq 0}$ is purely periodic when $K$ is
a finite field. We will also use this result to establish the periodicity of
the sequences $(\Phi _{n}(\mathbf{P}))_{n\geq 0}$ and $(\overline{\Omega }%
\,_{n}(\mathbf{P}))_{n\geq 0}$. Furthermore, we study convergence properties
of these sequences when $K$ is a complete local field. Finally, we use these
results to prove the $p$-adic behavior of Somos $4$ and Somos $5$ sequences,
and we use the periodicity properties of the sequence $(\Psi _{n}(\mathbf{P}%
))_{n\geq 0}$ to prove that even and odd index subsequences of a Somos $5$
sequence are purely periodic.

Our first main theorem states that the sequences of values of the translated
division polynomials are purely periodic over finite fields.

\begin{theorem}
Let ${\mathbb{F}}$ be a finite field, let $E/{\mathbb{F}}$ be an elliptic
curve, and let $\mathbf{P}=(P_{1},P_{2})\in $ $E({\mathbb{F}})^{2}$ be a
point such that the order of $P_{1}$ is $r>3$. Let $(\Psi _{n}(\mathbf{P}%
))_{n\geq 0}$, $(\Phi _{n}(\mathbf{P}))_{n\geq 0}$ and $(\overline{\Omega }%
\,_{n}(\mathbf{P}))_{n\geq 0}$ be the sequences of values of the translated
division polynomials of $E$ at $\mathbf{P}$ as in (\ref{2}). Then the
sequences $(\Psi _{n}(\mathbf{P}))_{n\geq 0}$, $(\Phi _{n}(\mathbf{P}%
))_{n\geq 0}$ and $(\overline{\Omega }\,_{n}(\mathbf{P}))_{n\geq 0}$ are
purely periodic with periods $rt$, $rt^{\prime }$ and $rt^{\prime \prime }$%
respectively, for some integers $t$, $t^{\prime }$ and $t^{\prime \prime }$
dividing $q-1$, where $q$ is the order of the field ${\mathbb{F}}$.
\end{theorem}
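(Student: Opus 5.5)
Following Silverman's approach \cite[Theorem 8]{JS3}, I would first prove a \emph{symmetry property} for the translated net: there are constants $a,b\in\mathbb{F}^{*}$, depending on $\mathbf{P}$ and $r$, such that
\begin{equation*}
\Psi_{n+r}(\mathbf{P})=a^{n}b\,\Psi_{n}(\mathbf{P})\qquad\text{for all }n\geq 0.
\end{equation*}
To obtain this, view $(n,m)\mapsto\Psi_{(n,m)}(\mathbf{P})$ as the rank-two elliptic net attached to $E$ and $(P_1,P_2)$, so that $\Psi_{n}(\mathbf{P})=\Psi_{(n,1)}(\mathbf{P})$. Stange's symmetry theorem for elliptic nets over a field \cite{SKT} says that for $\mathbf{v}=(r,0)$, which satisfies $\mathbf{vP}=[r]P_1=\mathcal{O}$, there are a quadratic form $\chi$ and a character $\xi$ with $\Psi_{\mathbf{n}+\mathbf{v}}=\chi(\mathbf{n})\xi(\mathbf{n})\Psi_{\mathbf{n}}$. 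Restricted to $\mathbf{n}=(n,1)$ this gives exactly $a^{n}b$. If I wanted to avoid quoting the net theory, I would instead follow Silverman: lift $E$ and $\mathbf{P}$ to characteristic zero and work over $\mathbb{C}$. There the net polynomials are ratios of $\sigma$-functions, e.g.\ $\Psi_{n}=\sigma(nz_1+z_2)/(\sigma(z_1)^{n^2-n}\sigma(z_1+z_2)^{n}\sigma(z_2)^{1-n}\cdots)$, up to the normalizing factors. The quasi-periodicity of $\sigma$ under $rz_1\in\Lambda$ then gives the relation, with $a,b$ rational expressions in the coordinates. The Lefschetz principle and reduction transfer it back to $\mathbb{F}$.

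The second step is to iterate the relation: $\Psi_{n+kr}=a^{kn+r\binom{k}{2}}b^{k}\Psi_{n}$. Taking $k=t$ with $t\mid q-1$ chosen so that the exponents kill the constants, for instance $t=q-1$ or the lcm of the orders of $a$, $b$ and $a^{r}$ suitably adjusted, gives $\Psi_{n+rt}=\Psi_{n}$. Here $a^{t}=b^{t}=1$ and $a^{r\binom{t}{2}}=1$. For odd $q$, $\binom{q-1}{2}$ need not be divisible by $q-1$, so I would check $a^{r t(t-1)/2}$ separately. In the worst case this holds because $a^{r}$ is a square-type constant coming from the quadratic form; alternatively I would take the minimal $t$ dividing $q-1$ that works.

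For $\Phi_n$ and $\overline{\Omega}_n$, note that $x([n]P_1+P_2)$ and $y([n]P_1+P_2)$ are periodic in $n$ with period $r$. Hence $\Phi_{n+r}=x_{n}\Psi_{n+r}^{2}=a^{2n}b^{2}\Phi_{n}$ and $\overline{\Omega}_{n+r}=a^{3n}b^{3}\overline{\Omega}_{n}$. The same iteration then gives periods $rt'$ and $rt''$ with $t',t''\mid q-1$. The hypothesis $r>3$ (together with $[n]P_1+P_2\neq\mathcal{O}$) guarantees that the initial values entering the symmetry constants are nonzero.

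I expect the main obstacle to be establishing the symmetry relation with nonzero constants in $\mathbb{F}^{*}$, i.e.\ the lifting and Lefschetz step or the exact form of Stange's theorem in the translated case. A secondary difficulty is the parity issue in the exponent $r\binom{t}{2}$, which has to be handled carefully so that $t$ still divides $q-1$.
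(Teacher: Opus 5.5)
Your outline is sound, and for $\Psi_n$ it is the paper's own route. Theorem 3.1 is proved by lifting to characteristic zero, writing $\Psi_n$ as the $\sigma$-quotient (\ref{7}), and applying quasi-periodicity with $\omega=krz_1$. The paper also names the Stange / Akbary et al.\ symmetry theorems as the alternative you mention.

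The differences are in the bookkeeping:
\begin{itemize}
\item Your normalization $\Psi_{n+r}=a^nb\Psi_n$, with $a=\Psi_{r+1}(\mathbf{P})/\Psi_r(\mathbf{P})$ and $b=\Psi_r(\mathbf{P})$, puts $a,b$ visibly in $\mathbb{F}^*$. The paper's form $a^{kn}(a^r)^{k^2/2}b^k$ needs a square root of $a^r$ in $\mathbb{F}$ just to define its $b$.
\item The paper does not itself show that some $t\mid q-1$ works for $\Psi$; it cites Swart's Theorem 9.4.1.
\item For $\Phi_n,\overline{\Omega}_n$ the paper substitutes (\ref{01}) into (\ref{3.1}) and (\ref{3.2}). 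You instead use $r$-periodicity of the coordinates of $[n]P_1+P_2$. Yours is cleaner: it avoids the factor $(2\beta_1)^{-1}$ in (\ref{3.2}), which fails in characteristic $2$. It also shows at once that the multipliers for $\Phi$ and $\overline{\Omega}$ are the square and cube of the one for $\Psi$, so a period $rt$ of $(\Psi_n(\mathbf{P}))$ serves for all three.
\item Your route uses $\Psi_n(\mathbf{P})\neq 0$ for all $n$. That is part of the setup (\ref{2}); at a zero of $\Psi_n$ you would fall back on (\ref{3.1}).
\end{itemize}

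The step you call secondary is where the whole content of $t\mid q-1$ lies, and as written it is not proved. For odd $q$, $a^{r\binom{q-1}{2}}=(a^{r(q-1)/2})^{q-2}$ equals $1$ exactly when $a^r$ is a square in $\mathbb{F}^*$. Conversely, a period $rt$ with $t\mid q-1$ forces $a^r$ to be a square. For even $t$, raise $a^{r\binom{t}{2}}b^t=1$ to the power $(q-1)/t$; for odd $t$, $a^t=1$ already makes $a$ a square. So ``take the minimal $t$ dividing $q-1$ that works'' presupposes the conclusion. Iterating the $k=1$ relation cannot help either, because that relation is blind to the square class of $a^r$.

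You need one extra input. Do the same $\sigma$-computation for the ordinary division values $\psi_n(P_1^*)=\sigma(nz_1)/\sigma(z_1)^{n^2}$. With $\omega=rz_1$ and $B=e^{\eta(\omega)\omega/2}\sigma(z_1)^{-r^2}$, one finds $B^2=a^r$. Here $a=e^{\eta(\omega)z_1}\sigma(z_1)^{-2r}$ is the same constant as in the translated relation, and $\psi_{r+1}(P_1^*)=\delta(\omega)\,a\,B$. Hence $\delta(\omega)B=\psi_{r+1}(P_1^*)/a\in K'$. After reduction, $a^r=(\psi_{r+1}(P_1)/a)^2$, with $\psi_{r+1}(P_1)\neq 0$ since $[r+1]P_1=P_1\neq\mathcal{O}$.

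In net language this is $g(\mathbf{v},\mathbf{v})=f(\mathbf{v})^2$ for $\mathbf{v}=(r,0)$, with $g(\mathbf{v},\mathbf{v})=g(\mathbf{v},\mathbf{e}_1)^r=a^r$ and $f(\mathbf{v})\in\mathbb{F}^*$. Note that the quadratic form is evaluated at $\mathbf{v}$, giving $f(k\mathbf{v})=f(\mathbf{v})^{k^2}$, not at $\mathbf{n}$ as you wrote. That $k^2$-term is exactly what your iteration discards.

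With $a^r=B^2$ and $B\in\mathbb{F}^*$, the choice $t=q-1$ works: $a^{q-1}=b^{q-1}=1$ and $a^{r\binom{q-1}{2}}=B^{(q-1)(q-2)}=1$. The paper itself only asserts that $a^r$ is a square because $\alpha^r$ is one in $\mathbb{C}$. That alone does not place the root in $K'$ or $\mathbb{F}$.
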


Our proof of Theorem 1.1 for the sequence $(\Psi _{n}(\mathbf{P}))_{n\geq 0}$
is inspired by Silverman's proof in \cite[Theorem 8]{JS3}, which was modeled
after a similar result by Ward \cite[Theorems 8.1 and 9.2]{MW1} for elliptic
divisibility sequences. This result could also follow from Theorems 9.3.2
and 9.4.1 of \cite{CS}, which use explicit formulas to prove the sequence $%
(\Psi _{n}(\mathbf{P}))_{n\geq 0}$ is periodic modulo prime powers $p^{l}$.
Furthermore, this result can also be derived using the symmetry results of
Stange \cite[Theorem 10.2.3]{SKT} and Akbary et al. \cite[Theorem 1.13]{AK},
from which the sequence $(\Psi _{n}(\mathbf{P}))_{n\geq 0}$ can be obtained
as a special case.

The following theorem shows that the sequences $(\Phi _{n}(\mathbf{P}%
))_{n\geq 0}$ and $(\overline{\Omega }\,_{n}(\mathbf{P}))_{n\geq 0}$ are
also periodic modulo prime powers.

\begin{theorem}
Let $E/%
\mathbb{Q}
$ be an elliptic curve and let $\mathbf{P}=(P_{1},P_{2})\in $ $E(\mathbb{Q}%
)^{2}$ be a pair consisting of points of infinite order. Let $p$ be an odd
prime such that $P_{1}$ and $P_{2}~$modulo $p$ are nonsingular and let $r>3$
be the order of $P_{1}$ modulo $p$. Let $p^{v}$ be the highest power of $p$
such that $[r]P_{1}\equiv \mathcal{O}$ $(\text{mod }$ $p^{v})$ and write $%
r_{l}=p^{l-v}r$ for $l\geq v$. Let $(\Phi _{n}(\mathbf{P}))_{n\geq 1}$ and $%
(\Omega \,_{n}(\mathbf{P}))_{n\geq 1}$and translated division polynomials of
$E$ at $\mathbf{P}$ as in (\ref{2}). Then the sequences
\begin{equation*}
(\Phi _{_{n}}(\mathbf{P})\text{ mod }p^{l})_{n\geq 1}\text{ \ and \ }(%
\overline{\Omega }\,_{_{n}}(\mathbf{P})\text{ mod }p^{l})_{n\geq 1}
\end{equation*}%
are purely periodic with periods $r_{l}t_{l}$, $r_{l}t_{l}^{\prime }$ \
respectively, for some integers $t_{l}$, $t_{l}^{\prime }$ dividing $%
p^{l-1}(p-1)$ for all positive integers $l$.
\end{theorem}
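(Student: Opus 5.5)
We follow the strategy of Silverman's proof of periodicity modulo prime powers for $(\Psi_n(P))$, as adapted in \cite{GB}. The plan has two stages. First we establish the quasi-periodicity formulas for the translated sequences. Then we reduce modulo $p^l$ and use the structure of $(\mathbb{Z}/p^l\mathbb{Z})^\ast$.

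\textbf{Step 1: shift relations.} Set $Q=[r_l]P_1$, which lies in $E_l(\mathbb{Q}_p)$, the kernel of reduction modulo $p^l$. Since $[r]P_1 \equiv \mathcal{O} \pmod{p^v}$ and multiplication by $p$ raises the level of the formal group by one (here $p$ odd), we get $[r_l]P_1\equiv\mathcal{O} \pmod{p^l}$. Stange's net recurrences give, for $\mathbf{n}=(n,1)$ and shifts by $(r_l,0)$, a relation of the form
\[
\Psi_{n+kr_l}(\mathbf{P})=\Psi_n(\mathbf{P})\,A^{k}B^{k^2}\,\cdot(\text{unit}\equiv 1 \bmod p^l),
\]
where $A,B$ are expressed through $\Psi_{r_l}(P_1)$, $\Psi_{r_l\pm1}(P_1)$ and the translated values. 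To obtain this we would invoke the translated analogue of the periodicity statement of Theorem 1.1 for $\Psi$, lifted modulo $p^l$ in the manner of Swart \cite[Theorems 9.3.2, 9.4.1]{CS}. In particular $A$ and $B$ are $p$-adic units.

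\textbf{Step 2: $\Phi$ and $\overline{\Omega}$.} We use $x([n]P_1+P_2)=\Phi_n/\Psi_n^2$. The point $[n+r_l]P_1+P_2=([n]P_1+P_2)+Q$ with $Q\in E_l$, so its $x$-coordinate is congruent to $x([n]P_1+P_2)$ modulo $p^l$. This holds provided $[n]P_1+P_2$ is $p$-integral, and the nonsingularity of $P_1,P_2$ mod $p$, together with $p$-integrality of $\Phi_n,\Psi_n$, handles this. When $\Psi_n$ is divisible by $p$ we avoid division and argue directly with the polynomial identities in the net ring. Then
\[
\Phi_{n+r_l}\equiv \Phi_n\,\bigl(\Psi_{n+r_l}/\Psi_n\bigr)^2 \pmod{p^l},
\]
and similarly for $\overline{\Omega}$ with the cube. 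Iterating $k$ times yields
\[
\Phi_{n+kr_l}\equiv \Phi_n\,(A^kB^{k^2})^2 \pmod{p^l}, \qquad \overline{\Omega}_{n+kr_l}\equiv \overline{\Omega}_n\,(A^kB^{k^2})^3 \pmod{p^l}.
\]

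\textbf{Step 3: choosing the periods.} Let $t_l$ be the least $k\ge1$ with $A^{2k}B^{2k^2}\equiv1 \pmod{p^l}$. The group $(\mathbb{Z}/p^l)^\ast$ has order $p^{l-1}(p-1)$, and taking $k=p^{l-1}(p-1)$ works: $k^2$ is also a multiple of the group order. Hence $t_l$ exists. Moreover, the set of valid $k$ is closed under addition and subtraction, because the cross term $B^{2\cdot 2kk'}$ is trivial when $k$ or $k'$ lies in it; this uses $B^{2k}$ being trivial for valid $k$, which we check first. So the valid $k$ form a subgroup $t_l\mathbb{Z}$ containing $p^{l-1}(p-1)$, and therefore $t_l\mid p^{l-1}(p-1)$. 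The same argument gives $t'_l$ for $\overline{\Omega}$. Pure periodicity, as opposed to eventual periodicity, follows because the relations hold for every $n\ge1$. The minimal period is then of the form $r_l t_l$, after possibly replacing $r_l$ by the true residual order as in Theorem 1.1.

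\textbf{Main obstacle.} Step 1 is the hardest part: deriving the precise quadratic-exponent shift formula for translated $\Psi$ modulo $p^l$, rather than over a finite field. The first thing to try is Stange's net symmetry \cite[Theorem 10.2.3]{SKT} applied to the net $(n_1,n_2)\mapsto\Psi_{(n_1,n_2)}$ with the period lattice generated by $(r_l,0)$, worked modulo $p^l$ via the formal group.
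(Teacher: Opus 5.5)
Your plan has the same architecture as the paper's proof. You import Swart's quasi-periodicity of $\Psi_n(\mathbf{P})$ modulo $p^l$, as the paper does in Theorem 5.2, transfer it to $\Phi_n$ and $\overline{\Omega}_n$, and choose $t_l$ group-theoretically. However, the shift law you write in Step 1 has the wrong shape, and Step 3 breaks because of it. The correct law (\ref{11}) is
\[
\Psi_{kr_l+n}(\mathbf{P})\equiv a_l^{kn}\,(a_l^{r_l})^{k^2/2}\,b_l^{k}\,\Psi_n(\mathbf{P}) \pmod{p^l},
\]
and the factor $a_l^{kn}$ depends on $n$, so it cannot be absorbed into constants $A,B$. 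Since $\Psi_0=\Psi_1=1$, your form $\Psi_{n+kr_l}\equiv A^kB^{k^2}\Psi_n$ would force $\Psi_{r_l+1}(\mathbf{P})\equiv\Psi_{r_l}(\mathbf{P})$. In fact $\Psi_{r_l+1}\equiv a_l\Psi_{r_l}$ with $a_l\not\equiv 1$ in general. This already happens in the classical case $P_2=\mathcal{O}$: for the sequence $0,1,1,-1,1,2,-1,-3,-5,7,-4,\dots$ attached to $y^2+y=x^3-x$ and $(0,0)$, modulo $5$ one has $r=8$ but $W(9)/W(1)\equiv 2\not\equiv 1\equiv W(10)/W(2)$. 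Pushing the correct law through (\ref{3.1}) and (\ref{3.2}), or through your Step 2, gives
\[
\Phi_{kr_l+n}\equiv a_l^{2kn}(a_l^{r_l})^{k^2}b_l^{2k}\,\Phi_n,\qquad
\overline{\Omega}_{kr_l+n}\equiv a_l^{3kn}(a_l^{r_l})^{3k^2/2}b_l^{3k}\,\overline{\Omega}_n \pmod{p^l}.
\]

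Consequently the criterion in Step 3 is wrong. The multiplier must be $1$ for every $n$, so $r_lk$ is a period of $(\Phi_n \bmod p^l)$ when $a_l^{2k}\equiv1$ \emph{and} $(a_l^{r_l})^{k^2}b_l^{2k}\equiv1$, not when a single congruence $A^{2k}B^{2k^2}\equiv1$ holds. Your closure argument is also unsupported as written: nothing in $A^{2k}B^{2k^2}\equiv1$ lets you ``check first'' that $B^{2k}\equiv1$, so the cross term $B^{4kk'}$ is not controlled. With the correct pair of conditions, the cross term for $k+k'$ is $(a_l^{r_l})^{2kk'}=(a_l^{2k})^{r_lk'}\equiv1$, and a similar computation handles $k-k'$. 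So the admissible $k$, together with $0$ and their negatives, form a subgroup $t_l\mathbb{Z}$ containing $p^{l-1}(p-1)$, whence $t_l\mid p^{l-1}(p-1)$. The same works for $\overline{\Omega}$ with exponent $3$. After this correction your argument is the paper's (Theorem 5.3 followed by the reasoning of Corollary 4.2). One further remark: the identities (\ref{3.1}) and (\ref{3.2}) are preferable to your coordinate congruence in Step 2, since they treat indices with $p\mid\Psi_n(\mathbf{P})$ uniformly. (\ref{3.2}) only needs $2\beta_1$ to be a $p$-unit, which holds because $p$ is odd and $r>3$.
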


\begin{remark}
If $r>3$ is the order of $P_{1}$ modulo $p$ and $p^{v}$ is the highest power
of $p$ such that $[r]P_{1}\equiv \mathcal{O}$ $(\text{mod}$ $p^{v})$, a
result of Ward \cite[Theorem 2.1]{MW2} shows that the order of $P_{1}$
modulo $p^{l}$ is $r$ or $p^{l-v}r$ according as $l<v$ or $l\geq v$, see
also \cite[Theorem 3.9.5.]{CS}. Thus we assumed in above theorems that the
order of $P_{1}$ modulo $p$ is greater than three. It also agrees with the
recurrence relation (\ref{s}) used to calculate the terms of the sequence $%
(\Psi _{n}(\mathbf{P}))_{n\geq 0}$. More precisely, if the order of the
point $P_{1}$ modulo $p$ is $2$ (or $3$), then the constant $\beta _{1}$ (or
$\beta _{2}$) in the relation (\ref{s}) equals zero. So, if the condition $%
r>3$ is dropped, this leads to much easier sequences. Also, if the order of
the $P_{1}$ modulo $p$ is $2$, then the sequence $(\overline{\Omega }_{n}(%
\mathbf{P}))$ cannot be defined, see equation (\ref{3.2}). Therefore, we
only consider the case of $\beta _{1}\beta _{2}\neq 0$.
\end{remark}

A result of Swart \cite[Theorem 9.4.1]{CS} and Theorem 1.2 lead to an
interesting question: whether these sequences possess a $p$-adic convergence
property. This question was studied by Silverman \cite[Theorem 2]{JS3} for
the sequences of values of the division polynomials $(\Psi _{n}(P))_{n\geq
0} $, and in an addendum he connected it with Ayad's work \cite{MA}. In \cite%
[Theorem 1.3]{GB}, the authors considered the same question for the
sequences of values of division polynomials $(\Phi _{n}(P))_{n\geq 0}$ and $(%
\overline{\Omega }_{n}(P))_{n\geq 0}$. The following theorem provides
generalizations of \cite[Theorem 2]{JS3} and \cite[Theorem 1.3]{GB} for the
sequences $(\Psi _{n}(\mathbf{P}))_{n\geq 1}$, $(\Phi _{n}(\mathbf{P}%
))_{n\geq 1}$ and $(\overline{\Omega }\,_{n}(\mathbf{P}))_{n\geq 1}$ of
values of the translated division polynomials of $E$ at $\mathbf{P}\in $ $%
E(K)^{2}$. In particular, the theorem shows that these sequences converge in
$\mathbb{Z}_{p}$.

\begin{theorem}
Let $E/%
\mathbb{Q}
_{p}$ be an elliptic curve, let $\mathbf{P}=(P_{1},P_{2})\in $ $E(%
\mathbb{Q}
_{p})^{2}$ be a point, and let $P_{1}$ be a point whose reduction modulo $p$
has order $r>3$. Let $(\Psi _{n}(\mathbf{P}))_{n\geq 1}$, $(\Phi _{n}(%
\mathbf{P}))_{n\geq 1}$ and $(\overline{\Omega }\,_{n}(\mathbf{P}))_{n\geq
1} $ be the\textit{\ }sequences of values of the translated division
polynomials of $E$ at $\mathbf{P}$ as in (\ref{2}). Suppose further that $p$
is an odd prime and that $p\nmid r$. Then there exists a power $q=p^{N}$
such that for all $m\geq 1$, the limits
\begin{equation*}
\lim\limits_{i\rightarrow \infty }\Psi _{mq^{i}}(\mathbf{P})\text{, \ }%
\lim\limits_{i\rightarrow \infty }\Phi _{mq^{i}}(\mathbf{P})\text{ and }%
\lim\limits_{i\rightarrow \infty }\overline{\Omega }\,_{mq^{i}}(\mathbf{P})
\end{equation*}%
exist in $\mathbb{Z}_{p}$.
\end{theorem}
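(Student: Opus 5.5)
The plan is to follow Silverman's strategy \cite[Theorem 2]{JS3} and our earlier treatment in \cite[Theorem 1.3]{GB}: we show that for each fixed $m$ the sequence $(\Psi_{mq^{i}}(\mathbf{P}))_{i\geq 1}$ is $\mathbb{Z}_p$-Cauchy, and then do the same for $\Phi$ and $\overline{\Omega}$. Completeness of $\mathbb{Z}_p$ then gives the limits.

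First we reduce to an integral setting. We replace $E$ by a minimal (integral) Weierstrass model. The hypothesis that the reduction of $P_1$ has order $r>3$ (so $\beta_1\beta_2\neq 0$ in the recurrence (\ref{s})) means the initial values can be taken in $\mathbb{Z}_p$. The recurrence then shows that all $\Psi_n(\mathbf{P})$, $\Phi_n(\mathbf{P})$, $\overline{\Omega}_n(\mathbf{P})$ lie in $\mathbb{Z}_p$. Next we use the periodicity modulo $p^{l}$. By Theorem 1.2 (stated for $\mathbb{Q}$, but its proof is local and works verbatim over $\mathbb{Q}_p$) and its $\Psi$-analogue from Swart \cite[Theorem 9.4.1]{CS}, the sequences reduced mod $p^{l}$ are purely periodic with periods $r_l t_l$, where $r_l=p^{l-v}r$ and $t_l\mid p^{l-1}(p-1)$.

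The key step is to choose $N$ so that the indices $mq^{i}$ and $mq^{j}$ are congruent modulo the period mod $p^{l}$ with $l\to\infty$ as $i,j\to\infty$. Since $p\nmid r$, we can take $N$ to be a multiple of the order of $p$ modulo $r(p-1)$; concretely, $N$ is divisible by the order of $p$ in $(\mathbb{Z}/r(p-1)\mathbb{Z})^{*}$, after first removing any common factor of $p$ with $p-1$, which is trivial. Then $q^{i}\equiv 1 \pmod{r(p-1)}$. The $p$-part of the period mod $p^{l}$ is at most $p^{2l}$ (from $p^{l-v}$ and $p^{l-1}$), while the prime-to-$p$ part divides $r(p-1)$. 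So for $i,j\geq 2l/N$ we get $mq^{i}\equiv mq^{j}$ both modulo $p^{2l}$ (both are divisible by it) and modulo $r(p-1)$. Hence $mq^{i}\equiv mq^{j}$ modulo the period $r_lt_l$, and therefore $\Psi_{mq^{i}}(\mathbf{P})\equiv\Psi_{mq^{j}}(\mathbf{P})\pmod{p^{l}}$. The same argument applies to $\Phi$ and $\overline{\Omega}$ with $t_l'$ and its analogue.

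The main obstacle I expect is the first step. We need $p$-adic integrality, and we need Theorem 1.2 to apply over $\mathbb{Q}_p$ without the infinite-order and nonsingular-reduction hypotheses stated there. If $P_2$ or some $[n]P_1+P_2$ reduces to $\mathcal{O}$, we should note that the periodicity argument only uses the recurrence with unit constants $\beta_1,\beta_2$, together with the fact that the $\Psi$-period multiplies the sequence by a $(q-1)$-th root-of-unity-type factor mod $p^{l}$. We would also handle the possibility $v=\infty$ (that is, $P_1$ torsion), where $r_l=r$ and the argument only gets easier.
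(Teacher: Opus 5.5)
Your core Cauchy argument is the paper's argument. The paper also takes $q=p^N\equiv 1\pmod{(p-1)r}$, which is where $p\nmid r$ is used, and feeds the indices $mq^i$, $mq^j$ into the mod-$p^l$ symmetry (\ref{11}). The only difference is packaging. You pass through the resulting period, which divides $p^{2l}r(p-1)$. The paper instead writes $mq^i=mq^j+s\,p^{l-v}r$ with $s=mt(p-1)p^{Nj-l+v}$ and substitutes $k=s$ into (\ref{11}). Your version is slightly cleaner. The paper's stated reason that the multiplier is $1$ (``$s\in\mathbb{Z}_{p^l}^{\ast}$'') is wrong as written, since $p\mid s$. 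What is really used is $p^{l-1}(p-1)\mid s$ once $Nj\ge 2l$, which is exactly your divisibility. That part is fine.

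The step that fails is your integrality preamble, and your last paragraph makes it worse.
\begin{itemize}
\item The hypothesis $r>3$ only makes $\beta_1,\beta_2$ units. It says nothing about the initial values $\Psi_{-1}=x_1-x_2$ and $\Psi_2=x_1-x(P_1+P_2)$, which are integral only when $P_2\not\equiv\mathcal{O}$ and $P_1+P_2\not\equiv\mathcal{O}\pmod p$.
\item The recurrence (\ref{s}) cannot propagate integrality. Solving for $\Psi_{m+2}$ means dividing by $\Psi_{m-2}$; already $\Psi_3=(\beta_1^2\Psi_2-\beta_2)/(x_1-x_2)$. By (\ref{y1}), $\Psi_{m-2}$ is a non-unit whenever $[m-2]P_1+P_2\equiv\mathcal{O}$.
\item Replacing $E$ by a minimal model is not free either. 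It changes $\Psi_n(\mathbf{P})$ by the factor $u^{n(n-1)}$, with $u$ the scaling of the coordinate change. This is harmless only when $u\in\mathbb{Z}_p^{\ast}$.
\end{itemize}
More seriously, the degenerate case you hope to absorb is a genuine obstruction when it is $P_1+P_2$ that reduces to $\mathcal{O}$. Suppose $\operatorname{ord}_p x(P_1+P_2)=-2k<0$. Take $\operatorname{ord}_p$ in $\Psi_{n+1}\Psi_{n-1}/\Psi_n^2=x_1-x([n]P_1+P_2)$, which follows from (\ref{3.4}), and sum the second differences. This gives $\operatorname{ord}_p\Psi_n(\mathbf{P})=-nk$ for all $n\equiv 0\pmod r$. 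So with $m=r$, $\Psi_{rq^i}(\mathbf{P})$ is unbounded for every $q$ and no limit exists.

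You must therefore add the hypotheses the paper uses tacitly, via Remark 1.2 and by importing (\ref{11}) from Swart's Theorem 5.1: the model and $P_1,P_2$ are integral, and $P_2,P_1\pm P_2\not\equiv\mathcal{O}\pmod p$. Under these hypotheses, derive the integrality of $\Psi_n(\mathbf{P})$ from that same valuation identity rather than from (\ref{s}). Integrality of $\Phi_n(\mathbf{P})$ and $\overline{\Omega}_n(\mathbf{P})$ then follows, the latter because $2\beta_1\in\mathbb{Z}_p^{\ast}$.
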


As a consequence of Theorem 1.3, we obtain several results regarding the $p$%
-adic convergence of Somos sequences, which were introduced by M. Somos. For
$k\geq 4$, a\textit{\ Somos }$k$\textit{\ sequence} is a sequence $%
(W(n))_{n\geq 0}~$that satisfies the recurrence relation%
\begin{equation}
W(n)W(n-k)=\sum\limits_{i=1}^{\left\lfloor k/2\right\rfloor }\tau
_{i}W(n-i)W(n-k+i)
\end{equation}%
where the $\tau _{i}$ are constant parameters. See; \cite{GE1}, \cite{GI},
\cite{Robinson}, \cite{AH1}, \cite{Propp}, \cite{CS}, \cite{P}, for more
details on Somos sequences.

There exists a relation between an elliptic curve and a Somos $4$ sequence
which was established independently by N. Elkies and N. Stephens; see \cite%
{Propp}, \cite{CS} for more details. Hone \cite{AH1} considered the sigma
function solution of the initial value problem for Somos $4$ sequences and
proved that the terms of a Somos $4$ sequence correspond to a sequence of
points $[n]P_{1}+P_{2}$ on an associated elliptic curve $E$, i.e., they
correspond to the sequence of values of the translated division polynomials $%
(\Psi _{n}(\mathbf{P}))_{n\geq 0}$ where $\mathbf{P}=(P_{1},P_{2})$. We use
a result of Hone \cite[Theorem 1.1]{AH1} and Theorem 1.3 to prove the
following theorem.

\begin{theorem}
Let $(W(n))_{n\geq 0}$ be a Somos $4$ sequence with $\tau _{1}$, $W(0)\neq 0$%
. Let $E/%
\mathbb{Q}
$ be the associated elliptic curve and let $\mathbf{P}=(P_{1},P_{2})\in $ $E(%
\mathbb{Q})^{2}$ be the associated rational point. Let $p$ be an odd prime
such that $P_{1}$ and $P_{2}~$modulo $p$ are nonsingular and let $P_{1}$, $%
P_{2}$, $P_{2}\pm P_{1}\not\equiv $ $\mathcal{O}$ $(\text{mod }p)$. Assume
further that ord$_{p}\left( \frac{W(1)}{W(0)}\right) $ $\geq 0$. Then there
is a power $q=p^{N}$ such that for all $m\geq 1$, the limit
\begin{equation*}
\lim\limits_{i\rightarrow \infty }W(mq^{i})\text{ \ }
\end{equation*}%
exists in $\mathbb{Z}_{p}$.
\end{theorem}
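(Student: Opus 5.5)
The plan is to transfer Theorem 1.3 to the Somos sequence through Hone's explicit description of Somos $4$ sequences. By \cite[Theorem 1.1]{AH1}, since $\tau_1\neq 0$ and $W(0)\neq 0$, there are constants $A,B$ with
\begin{equation*}
W(n)=A\,B^{n^{2}-1}\ \ (\text{up to the normalization in \cite{AH1}})\cdot \Psi_{n}(\mathbf{P}),
\end{equation*}
or more precisely a formula of the form $W(n)=a\,b^{n}c^{n^{2}}\Psi_{n}(\mathbf{P})$. Here $\mathbf{P}=(P_1,P_2)$ is attached to the curve $E$ built from $\tau_1,\tau_2$ and the initial values. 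The first step is to fix this formula in the paper's normalization of $\Psi_n(\mathbf{P})$, that is, $\Psi_0(\mathbf{P})=1$. Comparing $n=0$ and $n=1$ gives $a=W(0)$, and $b c$ is expressed through $W(1)/W(0)$. The constant $c$ should be a quantity such as $\tau_1^{-1/2}$ or $\beta_1$ coming from the model of $E$; I expect it to be determined by $\tau_1$ together with $W(1)/W(0)$.

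Next I will verify the hypotheses of Theorem 1.3 for $\mathbf{P}$ viewed in $E(\mathbb{Q}_p)^2$. The conditions $P_1,P_2,P_2\pm P_1\not\equiv\mathcal{O}\pmod p$, together with $\mathrm{ord}_p(W(1)/W(0))\geq 0$ and nonsingularity of $P_1,P_2$ modulo $p$, are what make $\Psi_n(\mathbf{P})$ $p$-integral and make the multipliers $b,c$ $p$-adic units (or at least $p$-integral). I take $r$ to be the order of $P_1$ modulo $p$. The case $r\le 3$ or $p\mid r$ is excluded implicitly, so I will either reduce to $r>3$, $p\nmid r$ in the intended setting or handle it by passing to a multiple, since Theorem 1.3 only needs a suitable power $q$. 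Theorem 1.3 then supplies $q_0=p^{N_0}$ such that $\lim_i \Psi_{mq_0^i}(\mathbf{P})$ exists in $\mathbb{Z}_p$ for every $m$.

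The final step is to control the prefactors along $n=mq^i$. For a $p$-adic unit $u$, the sequence $u^{mq^i}$ with $q=p^N$ converges once $N$ is a multiple of the order of $u$ modulo $p$. Concretely, with $N$ chosen so that $(p-1)\mid$ the relevant exponent, $u^{q^{i}}\to\omega(u)^{\cdot}$, the Teichmüller limit. The same argument applies to $c^{m^2q^{2i}}$, since $q^{2i}$ is again a power of $q$. I therefore enlarge $N$ to a common multiple of $N_0$ and the exponents needed for $b$ and $c$. Each factor then converges, and so the product $W(mq^i)$ converges in $\mathbb{Z}_p$. If $b$ or $c$ is merely $p$-integral with positive valuation, the prefactor tends to $0$ and the limit still exists, equal to $0$.

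The main obstacle I expect is the first step: extracting Hone's formula in exactly the normalization of this paper and proving that the constants $b,c$ are $p$-integral under the stated hypotheses, in particular that $\mathrm{ord}_p(W(1)/W(0))\ge 0$ is what is needed. I would first try to express $c$ through $\beta_1$ and $\tau_1$ via the recurrence (\ref{s}), and then compute $W(1)/W(0)$ in terms of $\Psi_1(\mathbf{P})$.
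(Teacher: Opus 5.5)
Your overall route is the paper's: write $W(n)$ as an explicit prefactor times $\Psi_n(\mathbf P)$ using Hone's sigma-function formula, get $\lim_i\Psi_{mq^i}(\mathbf P)$ from Theorem 1.3, and handle the prefactor with the Teichm\"uller character. The gap is the step you yourself flag as the main obstacle. Your final step depends on an unidentified factor $c^{n^2}$ whose $p$-integrality you never establish. Your candidates for $c$ are guesses, and for instance $\tau_1^{-1/2}$ need not even lie in $\mathbb{Q}_p$. So as written the argument is only conditional.

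The missing observation is that there is no quadratic factor at all. Divide Hone's expression
\[
W(n)=\frac{W(0)}{\sigma(z_2)}\Bigl(\frac{\sigma(z_1)\sigma(z_2)W(1)}{\sigma(z_1+z_2)W(0)}\Bigr)^{n}\frac{\sigma(nz_1+z_2)}{\sigma(z_1)^{n^2}}
\]
by Stange's $\Omega_n(\mathbf z)$ from (7). What remains is exactly $W(0)\,(W(1)/W(0))^n$, so (10) gives $W(n)=\alpha^n\beta\,\Psi_n(\mathbf P)$ with $\alpha=W(1)/W(0)$ and $\beta=W(0)$; this is Theorem 7.1. This is consistent with the recurrences. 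A factor $c^{n^2}$ would change the Somos coefficients to $c^6\tau_1$ and $c^8\tau_2$. A geometric factor $\alpha^n\beta$ leaves them unchanged, and the associated model already makes $\Psi_n(\mathbf P)$ satisfy (s) with $\tau_1=\beta_1^2$ and $\tau_2=-\beta_2$.

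Hence the only multiplier is $\alpha$, and its $p$-integrality is literally the hypothesis $\mathrm{ord}_p(W(1)/W(0))\ge 0$; that is what this hypothesis is for. The proof then closes as in the paper. If $\mathrm{ord}_p(\alpha)>0$, then $W(mq^i)\to 0$. If $\alpha\in\mathbb{Z}_p^{*}$, then $\alpha^{mq^i}\to\chi(\alpha)^m$ for every $q=p^N$, with no condition on $N$. Thus $W(mq^i)\to\chi(\alpha)^m\beta\lim_i\Psi_{mq^i}(\mathbf P)$.

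On the remaining hypotheses, the paper does not derive $r>3$, $p\nmid r$, or $\mathrm{ord}_p(W(0))\ge 0$ (the last is needed for the limit to lie in $\mathbb{Z}_p$). It treats them as standing assumptions, via Remark 1.2, so that Theorem 1.3 applies verbatim. Your vague ``passing to a multiple'' is therefore unnecessary for matching the paper's argument.
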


\begin{remark}
In Theorems 1.3 and 1.4 we supposed that the order of the point of $P_{1}$
modulo $p$ $>3$, i.e., $P_{1}$, $[2]P_{1}$, $[3]P_{1}\not\equiv $ $\mathcal{O%
}$ $(\text{mod }p)$ as in Theorems 1.1 and 1.2. We also assume that $P_{2}$,
$P_{1}-P_{2}$ $\not\equiv $ $\mathcal{O}$ $(\text{mod }p)$, since only Somos
sequences associated to elliptic curves are considered here. Assuming $%
P_{1}+P_{2}\not\equiv $ $\mathcal{O}$ $(\text{mod }p)$ is not a serious
restriction ---if $P_{1}+P_{2}$ is congruent to $\mathcal{O}$ $\text{modulo }%
p$, then the associated Somos sequences are short and therefore of little
interest. On the other hand, our assumptions $P_{1}+P_{2}$, $P_{1}\not\equiv
$ $\mathcal{O}$ $(\text{mod }p)$ also imply that $ord_{p}(\Psi _{2}(\mathbf{P%
}))$ $\geq 0$ (see relation for $\Psi _{2}(\mathbf{P})$ in Section 2). Thus
by (\ref{s}), we see that $ord_{p}(\Psi _{n}(\mathbf{P}))$ $\geq 0$, since
we supposed that the order of $P_{1}$ modulo $p$ is $>$ $3$, (see also \cite[%
Proposition 1.6.]{AK} for more details). Now equation (\ref{14}) implies
that ord$_{p}\left( W(n)\right) \geq 0$ since we assumed that ord$_{p}\left(
\frac{W(1)}{W(0)}\right) \geq 0$, and that ord$_{p}\left( W(0)\right) \geq 0$
for $P_{2}\not\equiv $ $\mathcal{O}$~$($mod $p)$. Thus every term of the
sequences $(W(n))_{n\geq 0}$ is a $p$-adic integer. We made similar
assumptions for the same reasons in the following theorem.
\end{remark}

Hone \cite[Theorem 2.9]{AH3}, also showed that the terms of Somos $5$
sequence can be expressed in terms of the Weierstrass sigma function for an
associated elliptic curve. We use this result and Theorem 1.3 to prove the
following theorem.

\begin{theorem}
Let $(W(n))_{n\geq 0}$ be a Somos $5$ sequence with $W(0)W(1)\neq 0$ and let
$E/%
\mathbb{Q}
$ be the associated elliptic curve. Let $\mathbf{P}=(P_{1},P_{2})$ and $%
\mathbf{Q}=(Q_{1},Q_{2})\in $ $E(\mathbb{Q})^{2}$ be the associated rational
points for even and odd index terms of $(W(n))$, respectively. Let $p$ be an
odd prime such that $P_{1}$, $P_{2}$, $Q_{1}$ and $Q_{2}$ modulo $p$ are
nonsingular and let $P_{1}$, $P_{2}$, $P_{2}\pm P_{1}$, $Q_{1}$, $Q_{2}$, $%
Q_{2}\pm Q_{1}\not\equiv $ $\mathcal{O}$ $(\text{mod }p)$. Assume further
that ord$_{p}\left( \frac{W(2)}{W(0)}\right) $, ord$_{p}\left( \frac{W(3)}{%
W(1)}\right) $ $\geq 0$. Then there is a power $q=p^{N}$ such that for all $%
m\geq 1$, the limits
\begin{equation*}
\lim\limits_{i\rightarrow \infty }W(2mq^{i})\text{ and }\lim\limits_{i%
\rightarrow \infty }W(2mq^{i}+1)\text{\ }
\end{equation*}%
exist in $\mathbb{Z}_{p}$.
\end{theorem}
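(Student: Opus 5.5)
The plan is to reduce the Somos $5$ statement to Theorem 1.3, using Hone's sigma-function description of Somos $5$ sequences \cite[Theorem 2.9]{AH3}, in the same way Theorem 1.4 is deduced from Hone's Somos $4$ result.

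\textbf{Step 1: split into two translated sequences.} By \cite[Theorem 2.9]{AH3}, the even and odd subsequences $(W(2n))_{n\geq 0}$ and $(W(2n+1))_{n\geq 0}$ each satisfy a Somos $4$ type relation. Each corresponds to a sequence of points $[n]P_{1}+P_{2}$, respectively $[n]Q_{1}+Q_{2}$, on the associated curve $E$. I expect this to give formulas analogous to (\ref{14}):
\begin{equation*}
W(2n)=W(0)\left( \frac{W(2)}{W(0)}\right) ^{n}\Psi _{n}(\mathbf{P}),\qquad
W(2n+1)=W(1)\left( \frac{W(3)}{W(1)}\right) ^{n}\Psi _{n}(\mathbf{Q}),
\end{equation*}
possibly with an additional quadratic factor $\alpha ^{n^{2}}$. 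If such a factor appears, I would absorb it by normalising $\Psi _{n}$, that is, by rescaling the Weierstrass model. The first task is to pin down these identities precisely, including the correct normalisation of $\Psi _{1}$ and of the initial terms, so that they hold for all $n$.

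\textbf{Step 2: apply Theorem 1.3 to $\Psi$.} The hypotheses of nonsingular reduction and $P_{1},P_{2},P_{2}\pm P_{1}\not\equiv \mathcal{O}$ (and the same for $\mathbf{Q}$) ensure, as in Remark 1.2, that every $\Psi _{n}(\mathbf{P})$ and $\Psi _{n}(\mathbf{Q})$ lies in $\mathbb{Z}_{p}$. Together with $\mathrm{ord}_{p}(W(2)/W(0))\geq 0$ and $\mathrm{ord}_{p}(W(3)/W(1))\geq 0$, all $W(n)$ are then $p$-adic integers.

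The orders of $P_{1}$ and $Q_{1}$ modulo $p$ must exceed $3$, as in the earlier theorems. Theorem 1.3 also needs $p\nmid r$. If $p$ divides the order, I would pass to $E$ over $\mathbb{Q}_{p}$ and reduce to the prime-to-$p$ case as in the proof of Theorem 1.3, or treat it as implicitly assumed.

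Theorem 1.3 then gives powers $q_{1}=p^{N_{1}}$ and $q_{2}=p^{N_{2}}$ such that $\lim_{i}\Psi _{mq_{1}^{i}}(\mathbf{P})$ and $\lim_{i}\Psi _{mq_{2}^{i}}(\mathbf{Q})$ exist. Take $q$ to be a common power, for instance $p^{N_{1}N_{2}}$; the subsequences along powers of $q$ still converge.

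\textbf{Step 3: handle the exponential prefactors.} Set $a=W(2)/W(0)$ and $n=mq^{i}$. If $a$ is a unit, then $a^{(p-1)p^{k}}\to 1$ as $k\to \infty$. So I would enlarge $N$ so that $p-1$ divides $q-1$, using $q\equiv 1 \pmod{p-1}$, and then $a^{mq^{i}}$ converges to $\omega (a)^{m}$, where $\omega$ is the Teichmüller character. If $a$ is a nonunit, then $a^{mq^{i}}\to 0$. Either way the product of the prefactor with $\Psi _{mq^{i}}(\mathbf{P})$ converges. This gives the limit of $W(2mq^{i})$, and $W(2mq^{i}+1)$ is handled identically.

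\textbf{Main obstacle.} The delicate part is Step 1: extracting from \cite{AH3} exact formulas in which the even and odd subsequences are, up to geometric and possibly quadratic-exponential factors, values of translated division polynomials on the same curve. If an $\alpha ^{n^{2}}$ factor survives, I would handle it with the same Teichmüller argument, since $(mq^{i})^{2}=m^{2}q^{2i}$ is also a power-of-$p$ multiple.
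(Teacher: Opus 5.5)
Your proposal follows the paper's route: the paper proves Theorem 1.5 by rerunning the proof of Theorem 1.4 on the identities $W(2n)=\alpha^{n}\beta\Psi_{n}(\mathbf{P})$ and $W(2n+1)=\gamma^{n}\delta\Psi_{n}(\mathbf{Q})$ of Theorem 7.2 (deduced from Hone's Theorem 2.9), using Theorem 1.3 for the $\Psi$-factors and the Teichm\"{u}ller limit for the geometric prefactors, which is exactly your Steps 1--3. Your unit/non-unit case split and your common choice of $q$ are slightly more careful than the paper's sketch, and the paper likewise leaves implicit the hypotheses $r>3$ and $p\nmid r$ that Theorem 1.3 requires --- but the proof of Theorem 1.3 gives no reduction from the case $p\mid r$, so only your ``treat it as implicitly assumed'' option is actually supported.
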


In \cite{Robinson}, Robinson used elementary methods to show that Somos $4$
and Somos $5$ sequences, whose initial terms and coefficients equal $1$, are
periodic modulo $n$ for every $n\in
\mathbb{N}
$. He also made conjectures on modulo prime power periodicity properties of
these sequences. Swart \cite{CS} proved several of Robinson's conjectures,
see also \cite{VanK}. Hone \cite{AH4} considered the periodicity of Somos $5$
sequences modulo a prime, whose all the initial values are $p$-adic units
and coefficients equal to $1$.

In the following theorem, we use the periodicity properties of the sequence $%
(\Psi _{n}(\mathbf{P}))_{n\geq 0}$ to prove that even and odd index
subsequences of a Somos $5$ sequence are purely periodic.

\begin{theorem}
With notation and assumptions as in Theorem 1.5, let the orders of $P_{1}$
and $Q_{1}$ modulo $p$ be $r_{1}$, $r_{1}^{\prime }>3$, respectively and let
$p^{v}$ and $p^{w}$ be the highest power of $p$ such that $%
[r_{1}]P_{1}\equiv \mathcal{O~}(\text{mod }$ $p^{v})$, $[r_{1}^{\prime
}]Q_{1}\equiv \mathcal{O}$ $(\text{mod }$ $p^{w})$, respectively. Then even
and odd index subsequences of the Somos $5$ sequence $(W(n))_{n\geq 0}$ are
purely periodic with periods $r_{l}t_{l}$ and $r_{l}^{\prime }t_{l}^{\prime
} $ respectively, for some integers $t_{l}$ and $t_{l}^{\prime }$ dividing $%
p^{l-1}(p-1)$ for all positive integers $l$.\
\end{theorem}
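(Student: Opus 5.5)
The plan is to reduce the statement to periodicity of translated division polynomial values modulo prime powers, via Hone's sigma-function description of Somos $5$ sequences. By \cite[Theorem 2.9]{AH3}, already used for Theorem 1.5, the even and odd index subsequences $(W(2n))_{n\geq 0}$ and $(W(2n+1))_{n\geq 0}$ each satisfy a Somos $4$ recurrence. Each is therefore attached to $E$ together with the pairs $\mathbf{P}=(P_{1},P_{2})$ and $\mathbf{Q}=(Q_{1},Q_{2})$. Concretely, I expect the representation
\begin{equation*}
W(2n)=W(0)\left(\frac{W(2)}{W(0)}\right)^{n}\,\Psi_{n}(\mathbf{P}),\qquad W(2n+1)=W(1)\left(\frac{W(3)}{W(1)}\right)^{n}\,\Psi_{n}(\mathbf{Q}),
\end{equation*}
up to a normalization constant $\Psi_{0}$ or $\Psi_{1}$. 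This is the Somos $5$ analogue of equation (\ref{14}) used for Theorem 1.4. The first step is to write these identities down carefully, including any quadratic factor $\alpha\beta^{n^{2}}$ that Hone's formula might introduce.

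The second step checks $p$-integrality, exactly as in the remark after Theorem 1.4. The hypotheses $P_{1},P_{2},P_{2}\pm P_{1}\not\equiv\mathcal{O}$, the analogues for $\mathbf{Q}$, and $r_{1},r_{1}^{\prime}>3$ give $\mathrm{ord}_{p}\Psi_{n}(\mathbf{P})\geq 0$ and $\mathrm{ord}_{p}\Psi_{n}(\mathbf{Q})\geq 0$ through the recurrence (\ref{s}). Together with $\mathrm{ord}_{p}(W(2)/W(0))\geq 0$ and $\mathrm{ord}_{p}(W(3)/W(1))\geq 0$, every term lies in $\mathbb{Z}_{p}$, so reduction modulo $p^{l}$ makes sense.

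The third step is the periodicity of $(\Psi_{n}(\mathbf{P})\bmod p^{l})$ with period $r_{l}t$, where $r_{l}=p^{l-v}r_{1}$ (or $r_{1}$ when $l<v$) and $t\mid p^{l-1}(p-1)$. This is the $\Psi$-part of the prime power statement, i.e. the analogue of Theorem 1.2 for $\Psi$. It follows from \cite[Theorems 9.3.2, 9.4.1]{CS}, or equivalently from the argument of Theorem 1.1 carried out over $\mathbb{Z}/p^{l}\mathbb{Z}$ using Ward's order result in Remark 1.1. One applies a symmetry formula of the form $\Psi_{n+r_{l}}(\mathbf{P})\equiv A\,B^{n}\,\Psi_{n}(\mathbf{P})$ with $A,B$ units modulo $p^{l}$. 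Iterating it gives $\Psi_{n+kr_{l}}\equiv A^{k}B^{kn+r_{l}k(k-1)/2}\Psi_{n}$. Choosing $k$ as the order of suitable unit quantities in $(\mathbb{Z}/p^{l}\mathbb{Z})^{\times}$, a group of order $p^{l-1}(p-1)$, makes the multiplier trivial.

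The last step combines the two. The geometric factor $(W(2)/W(0))^{n}$ modulo $p^{l}$ must be absorbed into the multiplier $B^{n}$, so that $W(2n+2kr_{l})\equiv C^{k}D^{kn}W(2n)$. Then $t_{l}$ is taken as the least $k$ with $C^{k}\equiv D^{k}\equiv 1$, which divides $p^{l-1}(p-1)$. The same argument with $\mathbf{Q}$ and $w$ handles the odd terms. I expect the main obstacle here. If $W(2)/W(0)$ is divisible by $p$, it is not a unit, and the factor $(W(2)/W(0))^{n}$ is eventually $0$ modulo $p^{l}$, which is only eventually periodic. Pure periodicity then needs either the $p$-adic unit hypothesis on the initial terms, in the style of \cite{AH4}, or the observation that the Somos recurrence itself runs backwards modulo $p^{l}$. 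Reversibility holds when $W(0)$ and $W(1)$ are units, because the recurrence solves for $W(n-5)$ by dividing by $W(n)$. I would try that reversibility argument first: finiteness of $(\mathbb{Z}/p^{l})^{5}$ gives eventual periodicity, and invertibility upgrades it to pure periodicity. The explicit formula is then used only to pin down the period as $r_{l}t_{l}$.
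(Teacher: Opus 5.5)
Your route is the paper's: Hone's representation (\ref{z4}), $p$-integrality as in Remark 1.2, Swart's symmetry congruence for $\Psi_n(\mathbf{P})$ modulo $p^l$ transported to the even and odd subsequences (the paper's (\ref{u}), (\ref{v})), and then an argument of the type used in Corollary 4.2. Two of your steps do not work as written.

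\textbf{The dropped quadratic factor.} Your final congruence $W(2n+2kr_l)\equiv C^kD^{kn}W(2n)$ loses the factor $B^{r_l k(k-1)/2}$ that your own iteration in the third step produces. The geometric factor only contributes $\alpha^{kr_l}$ to $C^k$ and cancels nothing. For even $k$ with $B^k\equiv 1$, that factor equals $(B^{k/2})^{r_l(k-1)}$. This is $-1$ when $B^{k/2}\equiv -1$ and $r_1$ is odd. So the least $k$ with $C^k\equiv D^k\equiv 1$ need not be a period.

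What repairs this is the extra structure in the paper's form $a_l^{kn}(a_l^{r_l})^{k^2/2}b_l^k$: there $a_l^{r_l}$ is the square of a unit $c_l$. The multiplier is then $a_l^{kn}c_l^{k^2}b_l^k$, which is killed by $k=p^{l-1}(p-1)$. Once $\alpha=W(2)/W(0)$ is a unit, you can also avoid the multiplier bookkeeping altogether:
\begin{itemize}
\item[(a)] $(\Psi_n(\mathbf{P})\bmod p^l)$ has a period $r_lt$ with $t\mid p^{l-1}(p-1)$, by Swart's result recorded after Theorem 5.2;
\item[(b)] $(\alpha^n\bmod p^l)$ has period $\mathrm{ord}(\alpha)\mid p^{l-1}(p-1)$;
\item[(c)] hence $W(2n)=\beta\alpha^n\Psi_n(\mathbf{P})$ has period $r_l\,\mathrm{lcm}(t,\mathrm{ord}(\alpha))$.
\end{itemize}
The odd terms are handled the same way with $\mathbf{Q}$, $\gamma$, $\delta$.

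\textbf{The non-unit case.} Your diagnosis is right, and the paper does not handle this case either: the units $a_l,b_l$ in (\ref{u}) cannot exist when $p\mid\alpha$. In that case $W(2n)\equiv 0\pmod{p^l}$ for all large $n$, while $W(0)\not\equiv 0$ once $l>\mathrm{ord}_pW(0)$. So pure periodicity is false. This case does occur under the stated hypotheses: replacing $W(n)$ by $p^nW(n)$ preserves the Somos $5$ recurrence, and (\ref{z4}) still holds with $\alpha$ replaced by $p^2\alpha$.

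Your proposed rescue by reversibility cannot work. Running the recurrence backward divides by $W(n)$, and even the forward step modulo $p^l$ divides by $W(n-5)$. So you would need every term to be a $p$-adic unit, not only $W(0)$ and $W(1)$. By (\ref{y1}) this fails whenever $[n]P_1+P_2\equiv\mathcal{O}\pmod p$ for some $n$. Moreover, when all terms are units, $\alpha$ is automatically a unit, so the obstacle never arises there.

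The correct repair is to strengthen the hypothesis to $\mathrm{ord}_p(W(2)/W(0))=\mathrm{ord}_p(W(3)/W(1))=0$. With that assumption, the lcm argument above proves the statement.
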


\section{Elliptic Curves and Elliptic Functions Over $%
\mathbb{C}
$.}

Let $E/%
\mathbb{C}
$ be an elliptic curve and consider the complex uniformization $\Phi :%
\mathbb{C}
/L\longrightarrow E(%
\mathbb{C}
)$ with a lattice $L\subset
\mathbb{C}
$. Then the isomorphism $\Phi $ induces an isomorphism
\begin{equation*}
\Phi ^{\ast }:(%
\mathbb{C}
/L)^{2}\longrightarrow E(%
\mathbb{C}
)^{2}
\end{equation*}%
defined by $\Phi ^{\ast }(\mathbf{z})=$ $(\Phi (z_{1}),\Phi (z_{2}))=\mathbf{%
P}$, where $\mathbf{P}=(P_{1},P_{2})$ is a point such that $P_{1}$, $P_{2}$ $%
\neq \mathcal{O}$ and $P_{2}\pm P_{1}\neq \mathcal{O}$. Fix a point $\mathbf{%
z}=(z_{1},z_{2})\in
\mathbb{C}
^{2}$ with $z_{i}$, $z_{i}+z_{j}\notin L$. For $n\in
\mathbb{Z}
$, Stange \cite[Definition 3.1]{SK} (see also \cite[Definition 5.1.1]{SKT})
defined a function on $%
\mathbb{C}
^{2}$
\begin{equation}
\Omega _{n}(\mathbf{z})=\Omega _{n}(\mathbf{z;~}L)=\frac{\sigma
(nz_{1}+z_{2})}{{\small \sigma (}z_{1},L{\small )}^{n{}^{2}-n}{\small \sigma
(}z_{1}+z_{2},L{\small )}^{n}\sigma {\small (}z_{2}{\small )^{1-n}}}
\label{7}
\end{equation}%
where $\sigma (z)=\sigma (z;~L)$ is the Weierstrass $\sigma $-function
associated to the lattice $L$. Then the map
\begin{equation}
\Psi (\mathbf{P};E):%
\mathbb{Z}
\rightarrow
\mathbb{C}
\text{, }n\text{\textbf{\ }}\mathbf{\mapsto }\text{ }\Omega _{n}(\mathbf{z})
\label{5}
\end{equation}%
is said to be the\textit{\ translated division polynomial} \textit{%
associated to} $E$ over $%
\mathbb{C}
$ and $\mathbf{P}$; see \cite[Theorem 3.7]{SK} and \cite[p.197]{AK} for a
more general version.

Furthermore, Stange defines the rational functions $\Omega _{n}(\mathbf{z})$
over any field as in the following way (see \cite[Chapter 4]{SK} and see
also \cite[Theorem 2.3]{AK} for a more general construction procedure). Let $%
S^{\ast }=%
\mathbb{Z}
\lbrack \alpha _{1},\alpha _{2},\alpha _{3},\alpha _{4},\alpha _{6}]$ and let%
\begin{equation}
R^{\ast }=S^{\ast }[x_{i},y_{i}][(x_{1}-x_{2})^{-1}]/\left\langle f^{\ast
}(x_{i},y_{i})\right\rangle \text{, }i=1,2\text{,}
\end{equation}%
where
\begin{equation*}
f^{\ast }(x,y)=y^{2}+\alpha _{1}xy+\alpha _{3}y-x^{3}-\alpha
_{2}x^{2}-\alpha _{4}x-\alpha _{6}\text{.}
\end{equation*}%
Then there exists a morphism $\varphi _{_{\mathbf{P},E}}:R^{\ast
}\rightarrow $ $K$ such that $\varphi _{_{\mathbf{P},E}}(\alpha _{r})=a_{r}$%
, and $(\varphi _{_{\mathbf{P},E}}(x_{i}),\varphi _{_{\mathbf{P}%
,E}}(y_{i}))=P_{i}$, $i=1$, $2$ for every elliptic curve $E/K$ defined by
the polynomial (\ref{1}), and $\mathbf{P}=(P_{1},P_{2})\in $ $E(K)^{2}$ with
$P_{1}$, $P_{2}$ $\neq \mathcal{O}$ and $P_{2}\pm P_{1}\neq \mathcal{O}$.
Moreover there exists $\Psi _{n}^{\ast }\in R^{\ast }$ such that $\Psi
^{\ast }:n\mapsto \Psi _{n}^{\ast }$ defines a translated division
polynomial for each $n\in
\mathbb{Z}
$, and $\varphi _{_{\mathbf{P},E}}\circ \Psi ^{\ast }=\Psi (\mathbf{P};E)$
for any elliptic curve $E/%
\mathbb{C}
$ and $\mathbf{P}=(P_{1},P_{2})\in $ $E(%
\mathbb{C}
)^{2}$ with $P_{1}$, $P_{2}$ $\neq \mathcal{O}$ and $P_{2}\pm P_{1}\neq
\mathcal{O}$. Furthermore there is a map $\varphi _{_{E}}:S^{\ast
}\rightarrow $ $K$, such that $\varphi _{_{E}}(\alpha _{s})=a_{s}$, and this
map induces a map%
\begin{equation*}
\ (\varphi _{_{E}})_{\ast }:R^{\ast }\rightarrow
R=K[x_{i},y_{i}][(x_{1}-x_{2})^{-1}]/\left\langle f^{\ast
}(x_{i},y_{i})\right\rangle \text{, }i=1,2.
\end{equation*}%
Then the map $\Psi :n\mapsto (\varphi _{_{E}})_{\ast }(\Psi _{n}^{\ast })$
gives a translated division polynomial with values in $R$. The function $%
\Psi _{n}\in R$ is said to be the $n$-\textit{translated division polynomial
associated to }$E$. Furthermore, if $\mathbf{P}\in $ $E(K)^{2}$ with $P_{1}$%
, $P_{2}$ $\neq \mathcal{O}$ and $P_{2}\pm P_{1}\neq \mathcal{O}$, then
\begin{equation}
\Psi (\mathbf{P};E):%
\mathbb{Z}
\rightarrow K\text{, }n\mapsto \Psi _{n}(\mathbf{P})  \label{6}
\end{equation}%
is a translated division polynomial with values in $K$, and $\Psi (\mathbf{P}%
;E)$ is said to be the\textit{\ translated division polynomial associated to
}$E$ \textit{over }$K$ \textit{and} $\mathbf{P}$. In particular, Stange \cite%
[Theorem 6.1.1]{SKT} (see also \cite[Theorem 4.1]{SK}) showed that if $K=%
\mathbb{C}
$, then by (\ref{5}) and (\ref{6}),
\begin{equation}
\Psi _{n}(\mathbf{P})=\Omega _{n}(\mathbf{z})  \label{10}
\end{equation}%
for all $\mathbf{z}\in
\mathbb{C}
^{2}$ and all $n\in
\mathbb{Z}
$.\qquad

Let $x_{1}$, $y_{1}$ and $x_{2}$, $y_{2}$ be coordinates of the points $%
P_{1} $ and $P_{2}$, respectively. We can compute $\Psi _{n}$ by using the
initial values as follows
\begin{eqnarray*}
\Psi _{-1} &=&x_{1}-x_{2} \\
\Psi _{0} &=&\Psi _{1}=1\text{,} \\
\Psi _{2} &=&2x_{1}+x_{2}-\left( \frac{y_{2}-y_{1}}{x_{2}-x_{1}}\right)
^{2}-a_{1}\left( \frac{y_{2}-y_{1}}{x_{2}-x_{1}}\right) +a_{2}
\end{eqnarray*}%
and by the relation
\begin{equation}
\Psi _{m+2}\Psi _{m-2}=\beta _{1}^{2}\Psi _{m+1}\Psi _{m-1}-\beta _{2}\Psi
_{m}^{2}  \label{s}
\end{equation}%
where
\begin{equation*}
\beta _{1}=2y_{1}+a_{1}x_{1}+a_{3},
\end{equation*}%
and
\begin{equation*}
\beta _{2}=3x_{1}^{4}+b_{2}x_{1}^{3}+3b_{4}x_{1}^{2}+3b_{6}x_{1}+b_{8}
\end{equation*}%
(here $b_{i}$ are the usual quantities, see \cite[Chapter III.I]{JS1}). We
refer the reader to Proposition 6.1.4 and Theorem 7.1.1 of \cite{SKT} for
the initial values, furthermore, the recursive formula (\ref{s}) can be
derived by using the Theorem 5.2.1 of \cite{SKT}. Moreover, the values of $%
\Phi _{n}$ and $\overline{\Omega }\,_{n}$ can be computed from the values of
$\Psi _{n}$ as follows: let $E/K$ be an elliptic curve given by the
polynomial (\ref{1}), let $\mathbf{P}\in $ $E(K)^{2}$ be a point with $P_{1}$%
, $P_{2}$ $\neq \mathcal{O}$ and $P_{2}\pm P_{1}\neq \mathcal{O}$, let $n\in
$ $\mathbb{Z}$, and let $F$ be the field of fractions of $R$. Then there
exist rational functions $X_{n}$, $Y_{n}$ $\in F$ such that%
\begin{equation*}
\lbrack n]P_{1}+P_{2}=(X_{n}\text{, }Y_{n})\text{.}
\end{equation*}%
In \cite[Lemma 2.5]{AK}, authors used a theorem of Stange \cite[Lemma 4.2]%
{SK} to prove that
\begin{equation}
X_{n}=\frac{x(P_{1})\Psi _{n}^{2}-\Psi _{n+1}\Psi _{n-1}}{\Psi _{n}^{2}}%
\text{.}  \label{3.4}
\end{equation}%
for all $n\in $ $\mathbb{Z}$. Thus for any $n\in $ $\mathbb{Z}$ there exists
$\Phi _{n}\in R$ such that%
\begin{equation*}
X_{n}=\frac{\Phi _{n}(\mathbf{P})}{\Psi _{n}(\mathbf{P})^{2}}
\end{equation*}%
and in particular,%
\begin{equation}
\Phi _{n}=x(P_{1})\Psi _{n}^{2}-\Psi _{n+1}\Psi _{n-1}  \label{3.1}
\end{equation}%
for all $n$. In fact, one can use elliptic function theory (see \cite[%
Chapter 2]{Sc} for more details) to show that for any $n\in $ $\mathbb{Z}$
there exists $\overline{\Omega }\,_{n}\in R$ such that%
\begin{equation*}
Y_{n}=\frac{\overline{\Omega }_{n}(\mathbf{P})}{\Psi _{n}(\mathbf{P})^{3}}
\end{equation*}%
and in particular,%
\begin{equation}
\overline{\Omega }\,_{n}=\Psi _{n-1}^{2}\Psi _{n+2}-\Psi _{n-2}\Psi
_{n+1}^{2}-\beta _{1}\Psi _{n}(a_{1}\Phi _{n}+a_{3}\Psi _{n}^{2}))(2\beta
_{1})^{-1}  \label{3.2}
\end{equation}%
for all $n$, where $\beta _{1}$ as above.

\section{Periodicity of the Sequence $(\Psi _{n}(\mathbf{P}))$ Over Finite
Fields}

In \cite{CS}, Swart used explicit formulas to prove Somos $4$ sequences are
periodic modulo prime powers $p^{l}$. The proofs of Theorems 1.1 and 3.1 for
the sequence $(\Psi _{n}(\mathbf{P}))_{n\geq 0}$ would follow from Theorems
9.3.2 and 9.4.1 of \cite{CS}. Alternatively, these theorems could follow
from Theorem 1.13 of \cite{AK}, which can be considered as a generalization
of \cite[Theorems 8.1 and 9.2]{MW1}. The proof of Theorem 1.13 of \cite{AK}
uses elliptic net recurrence to remove certain restrictions in Theorem
10.2.3 of \cite{SKT}.

Our proof of Theorem 1.1 for the sequence $(\Psi _{n}(\mathbf{P}))_{n\geq 0}$
is inspired by Silverman's proof in \cite[Theorem 8]{JS3}, which was modeled
after a similar result by Ward \cite[Theorems 8.1 and 9.2]{MW1} for elliptic
divisibility sequences. In \cite[Theorem 8]{JS3}, Silverman used
sophisticated methods to prove that the division polynomials over finite
fields are purely periodic sequences. Inspired by his approach, we will show
that translated division polynomials over finite fields are purely periodic.

Let $R$ be a complete local ring of characteristic zero with residue field ${%
\mathbb{F}}$ of characteristic $p>0$, let $K$ be the field of fractions of $%
R $, and let $\mathfrak{p}$ be the maximal ideal of $R$. To prove our
result, we need to know the behavior of zeros of the sequence $(\Psi _{n}(%
\mathbf{P})(\text{mod }\mathfrak{p}))_{n\geq 0}$. More precisely, if $\Psi
_{n}(\mathbf{P})$ is the sequences of values of the translated division
polynomials of $E$ at $\mathbf{P}$\textbf{, }then%
\begin{equation}
\Psi _{n}(\mathbf{P})\equiv 0~(\text{mod }\mathfrak{p})\Leftrightarrow
\lbrack n]P_{1}+P_{2}\equiv \mathcal{O~}(\text{mod }\mathfrak{p})\text{,}
\label{y1}
\end{equation}%
see \cite[Theorem 7.6.1]{CS}, for more details.

\begin{theorem}
Let ${\mathbb{F}}$ be a finite field, let $E/{\mathbb{F}}$ be an elliptic
curve, and let $\mathbf{P}=(P_{1},P_{2})\in $ $E({\mathbb{F}})^{2}$. Let the
order of $P_{1}$ be $r>3$ in $E({\mathbb{F}})$, and let $(\Psi _{n}(\mathbf{P%
}))_{_{n\geq 0}}$ be the sequences of values of the translated division
polynomials of $E$ at $\mathbf{P}$. Then there exist units $a$, $b$ $\in {%
\mathbb{F}}^{\ast }$, depending on $\mathbf{P}$, such that for all
nonnegative integers $k$, $n$
\begin{equation}
\Psi _{kr+n}(\mathbf{P})=a^{kn}(a^{r})^{k^{2}/2}b^{k}{}\Psi _{n}(\mathbf{P})%
\text{.}  \label{01}
\end{equation}%
Furthermore, the units $a$ and $b$ satisfy
\begin{equation}
a=\frac{\Psi _{r+1}(\mathbf{P})}{\Psi _{r}(\mathbf{P})}\text{, \ \ }b=\frac{%
\Psi _{r}(\mathbf{P})}{(a^{r})^{1/2}}\text{.}
\end{equation}
\end{theorem}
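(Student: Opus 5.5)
Following Silverman's approach, I would lift to characteristic zero and use the Weierstrass $\sigma$-function together with the Lefschetz principle. Let $R$ be the Witt vectors of $\mathbb{F}$ (a complete local ring of characteristic zero with residue field $\mathbb{F}$) and $K$ its fraction field. Lift $E$ to a Weierstrass curve $\tilde E/R$. Lift $P_1$ to a point $\tilde P_1$ such that $[r]\tilde P_1$ is still $\mathcal{O}$ modulo $\mathfrak p$, but in general not exactly $\mathcal{O}$. Lift $P_2$ to some $\tilde P_2$. The nondegeneracy conditions $P_1,P_2,P_2\pm P_1\neq\mathcal O$ hold modulo $\mathfrak p$, so they persist after lifting.

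The key identity to establish is a quasi-periodicity law for $\Omega_n$ in (\ref{7}). Over $\mathbb{C}$, with $\tilde P_1=\Phi(z_1)$, we have $rz_1=\omega+\varepsilon$ with $\omega\in L$. I would first treat the idealized case $rz_1=\omega\in L$, i.e. $[r]P_1=\mathcal O$ exactly. Here I use the $\sigma$ transformation law $\sigma(z+\omega)=\pm e^{\eta(\omega)(z+\omega/2)}\sigma(z)$. Computing $\Omega_{kr+n}/\Omega_n$ from (\ref{7}), the numerator contributes an exponential that is linear in $n$ and quadratic in $k$. The denominator powers $\sigma(z_1)^{(kr+n)^2-(kr+n)}$ versus $\sigma(z_1)^{n^2-n}$ contribute $\sigma(z_1)^{-2krn-k^2r^2+kr}$, and there are similar factors from $\sigma(z_1+z_2)$ and $\sigma(z_2)$. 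The ratio should therefore have the shape $A^{kn}B^{k^2}C^k$, which is an identity of the form (\ref{01}). By the Lefschetz principle and Stange's identification (\ref{10}), the same identity holds for $\Psi_n$ as elements of the ring $R^*$ modulo the relation "$[r]P_1=\mathcal O$".

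Since the exact torsion case is awkward to lift, I would instead use a cleaner route valid over any field. The aim is to show directly from the recurrence (\ref{s}) that $n\mapsto \Psi_{kr+n}(\mathbf P)$ and $n\mapsto a^{kn}c_k\Psi_n(\mathbf P)$ satisfy the same Somos-4 recurrence. Observe that if $(W_n)$ satisfies (\ref{s}), then so does $(\lambda\mu^{n}\nu^{n^2}W_n)$ precisely when $\nu^{8}=\nu^{2}\cdot\nu^{2}$ fails. One must check this carefully, since the exponents in (\ref{s}) are balanced: $(m+2)^2+(m-2)^2=2m^2+8$ while $(m+1)^2+(m-1)^2=2m^2+2$. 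So only $\nu=1$, i.e. $\lambda\mu^n$, is a symmetry. Hence it suffices to show that the shifted sequence $\Psi_{r+n}$ agrees with $ab\,a^{n}\Psi_n$ on four consecutive indices, which then propagates to all $n$. Iterating in $k$ gives the $a^{kn}(a^r)^{k^2/2}b^k$ form. Setting $n=0,1$ pins down $\Psi_r=(a^r)^{1/2}b$ and $\Psi_{r+1}=a\Psi_r$, which gives the stated formulas, using $\Psi_0=\Psi_1=1$.

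The main obstacle is verifying the four initial agreements, i.e. that $\Psi_{r+n}/\Psi_n$ is a geometric progression in $n$ for $n=-1,0,1,2$. This is exactly where the torsion of $P_1$ enters. I would obtain it by specializing the $\sigma$-function computation above: in the lifted setting, the quasi-periodicity gives the geometric form up to an error factor that tends to $1$ as $\varepsilon\to0$. Alternatively, I would use the net symmetry of Stange \cite[Theorem 10.2.3]{SKT} and Akbary et al. \cite[Theorem 1.13]{AK} in the direction $(r,0)$. Finally, I would check that $a,b\in\mathbb F^*$, i.e. that $\Psi_r,\Psi_{r+1}\neq0$. By (\ref{y1}) this holds because $[r]P_1+P_2=P_2\neq\mathcal O$ and $[r+1]P_1+P_2=P_1+P_2\neq\mathcal O$. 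One remaining point is to make sense of $(a^r)^{1/2}$, by choosing a square root in a quadratic extension if necessary, or better by absorbing it into $b$.
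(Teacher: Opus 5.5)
\textbf{There is a genuine gap.} Reducing to four initial values only moves the difficulty, and the step you call the main obstacle is never proved. Agreement at $n=0,1$ merely defines $a$ and $\Psi_r(\mathbf P)$. The real content is the pair of identities $\Psi_{r-1}(\mathbf P)=a^{-1}\Psi_r(\mathbf P)\Psi_{-1}(\mathbf P)$ and $\Psi_{r+2}(\mathbf P)=a^{2}\Psi_r(\mathbf P)\Psi_{2}(\mathbf P)$.

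Your first justification for these fails. For a lift with $[r]\tilde P_1\neq\mathcal O$ one has $rz_1\notin L$, so the transformation law of $\sigma$ gives nothing exact. Moreover, the size in $\mathbb C$ of the fixed number $\varepsilon$ has no bearing on congruences modulo $\mathfrak p$, so an ``error factor close to $1$'' cannot be turned into an identity over $\mathbb F$. Your second justification, the net symmetry theorems of Stange and Akbary et al., already contains the full statement, which makes the recurrence argument redundant.

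The idea you set aside is exactly the paper's. Choose the lift so that $P_1^*$ has \emph{exact} order $r$, e.g. as the Teichm\"uller-type limit of $[p^i]\tilde P_1$ along $p^i\equiv 1\pmod{r}$ when $p\nmid r$, over a finite extension $K'$ if needed. Then:
\begin{itemize}
\item $rz_1\in L$, so your $\sigma$-computation gives (\ref{81}) for all $k,n$ at once;
\item $\alpha=\Psi_{r+1}(\mathbf P^*)/\Psi_r(\mathbf P^*)$ and $\beta$ are $\mathfrak p'$-units, because $P_2,\,P_1+P_2\not\equiv\mathcal O$;
\item reducing modulo $\mathfrak p'$ yields (\ref{01}) directly, with no recurrence and no trouble from vanishing terms.
\end{itemize}

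Your propagation step has a separate gap. The recurrence (\ref{s}) determines $\Psi_{m+2}$ only when $\Psi_{m-2}(\mathbf P)\neq 0$. If $P_2=[-n_0]P_1$ for some $n_0$, the sequence vanishes on the whole class $n\equiv n_0\pmod{r}$ (cf. (\ref{y1})). Two solutions of (\ref{s}) that agree on four consecutive indices then need not agree past such a zero: the term four steps after the zero is left undetermined. You would need further relations from the elliptic-net recurrence to bridge the zeros.

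If you want to keep your elementary route, the initial agreements are in fact easy and need no lifting. Since $[r+n]P_1+P_2=[n]P_1+P_2$, formula (\ref{3.4}) at $n=0$ and $n=1$ gives $\Psi_{r+1}\Psi_{r-1}/\Psi_r^2=\Psi_{-1}$ and $\Psi_{r+2}\Psi_r/\Psi_{r+1}^2=\Psi_2$, which are exactly the two identities above. More generally, $R_n=\Psi_{r+n}/\Psi_n$ satisfies $R_{n+1}R_{n-1}=R_n^2$ on every run of nonvanishing terms. What then remains is to link the runs across the zeros.
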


\begin{proof}
Let $R$ be a complete local ring of characteristic zero with residue field ${%
\mathbb{F}}$ of characteristic $p>0$, let $K$ be the field of fractions of $%
R $, and let $\mathfrak{p}$ be the maximal ideal of $R$, let
\begin{equation*}
\Gamma =\{n\in
\mathbb{N}
~|~\Psi _{n}(\mathbf{P})\equiv 0~(\text{mod }\mathfrak{p})\}\text{,}
\end{equation*}%
and let $n\in
\mathbb{N}
\backslash \Gamma $, since otherwise, both hand sides of the equation (\ref%
{01}) are zero, and the formula is true for any choice of $a$ and $b$.

Let $\mathcal{E}/R$ be a lift of $E/{\mathbb{F}}$ given by a Weierstrass
equation whose reduction modulo $\mathfrak{p}$ is the Weierstrass equation
of $E/{\mathbb{F}}$. Then we can lift $\mathbf{P}$ $\in $ $E({\mathbb{F}}%
)^{2}$ to a point in $\mathcal{E}(R)^{2}$, since the reduction map $\mathcal{%
E}(R)^{2}\rightarrow E({\mathbb{F}})^{2}$ is surjective. We also note that
we lift $P_{1}$ to a torsion point $P_{1}^{\ast }$ of order $r$ and take
that $P_{2}^{\ast }=[-n]P_{1}^{\ast }$. If char$({\mathbb{F)=~}}p\nmid
~(kr+n)$ ($k\in
\mathbb{Z}
$), then there is a unique such lift. This can be computed by taking any
lift $(P_{1}^{\ast },P_{2}^{\ast })$ $\in $ $\mathcal{E}(R)^{2}$ and
computing the limit
\begin{equation*}
\mathbf{P}^{\ast }=\lim_{\substack{ i\rightarrow \infty  \\ p^{i}\equiv 1+n~(%
\text{mod}~r)}}[p^{i}]P_{1}^{\ast }+P_{2}^{\ast }\text{.}
\end{equation*}%
This can be considered the translation of the limit formula given in the
proof of Theorem 8 of \cite{JS3}, since the limit depends only on the point $%
P_{1}^{\ast }$, see also \cite[Proposion 10]{JS3} for more details on this
limit. Silverman also explains that this lifting cannot be done over $R$,
but only within some finite extension; see the proof of Theorem 8 of \cite%
{JS3} for more details. Thus we can take a finite extension $K^{\prime }/K$
with ring of integers $R^{\prime }/R$, residue field ${\mathbb{F}}^{\prime }/%
{\mathbb{F}}$, and maximal ideal $\mathfrak{p}^{\prime }$ of $\mathfrak{p}$
such that there exists a point $\mathbf{P}^{\ast }\in $ $\mathcal{E}%
(R^{\prime })^{2}$ satisfying $\mathbf{P}^{\ast }\equiv \mathbf{P}~(\text{%
mod }\mathfrak{p}^{\prime })$, i.e.,
\begin{equation*}
P_{1}^{\ast }\equiv P_{1}(\text{mod }\mathfrak{p}^{\prime })\text{ and }%
P_{2}^{\ast }\equiv P_{2}(\text{mod }\mathfrak{p}^{\prime })
\end{equation*}%
where $[n]P_{1}^{\ast }+P_{2}^{\ast }=\mathcal{O}$. Now we take a subfield $%
K_{0}^{\prime }$ of $K^{\prime }$ which is small enough to embed $%
K_{0}^{\prime }$ into $%
\mathbb{C}
$ but large enough such that $\mathbf{P}^{\ast }$ $\in $ $\mathcal{E}%
(K_{0}^{\prime })^{2}$. Then we have an embedding
\begin{equation*}
\mathcal{E}(K_{0}^{\prime })^{2}\subset \mathcal{E}(%
\mathbb{C}
)^{2}\overset{~~~{\small \Phi }^{\ast }}{{\large \leftarrow }}(%
\mathbb{C}
/L)^{2}
\end{equation*}%
for some lattice $L$. Now, let $\Phi :$ $%
\mathbb{C}
/L\rightarrow \mathcal{E}(%
\mathbb{C}
)$, and let $\mathbf{P}^{\ast }=\Phi ^{\ast }(\mathbf{z})=$ $(\Phi
(z_{1}),\Phi (z_{2}))$ for some $\mathbf{z}\in (%
\mathbb{C}
/L)^{2}$ under this identification. Then by (\ref{10}),%
\begin{equation}
\Psi _{n}(\Phi ^{\ast }(\mathbf{z}))=\Omega _{n}(\mathbf{z})
\end{equation}%
for all $\mathbf{z}\in
\mathbb{C}
^{2}$ and all $n\in
\mathbb{Z}
$. Then by (\ref{7}),%
\begin{equation}
\Psi _{kr+n}(\Phi (z_{1}),\Phi (z_{2}))=\frac{\sigma (krz_{1}+nz_{1}+z_{2})}{%
{\small \sigma (}z_{1}{\small )}^{(kr+n){}^{2}-(kr+n)}{\small \sigma (}%
z_{1}+z_{2}{\small )}^{kr+n}{\small \sigma (}z_{2}{\small )}^{1-kr-n}}
\end{equation}%
for all $z_{i}$ $\in
\mathbb{C}
$ with $z_{i}$ and $z_{i}+$ $z_{j}\notin L$. Now multiplying and then
dividing the last equation by $\Psi _{n}$ yields%
\begin{eqnarray}
\Psi _{kr+n}(\Phi (z_{1}),\Phi (z_{2})) &=&\frac{\sigma
(krz_{1}+nz_{1}+z_{2})}{\sigma (nz_{1}+z_{2})}{\small (\sigma (}z_{1}{\small %
)}^{kr-2krn-k^{2}r{}^{2}}{\small )}  \notag \\
&&\times {\small (\sigma (}z_{1}+z_{2}{\small )}^{-kr}{\small \sigma (}z_{2}%
{\small )}^{kr})\Psi _{n}(\Phi (z_{1}),\Phi (z_{2}))  \label{r1}
\end{eqnarray}%
where $nz_{1}+z_{2}\notin L$.

By assumption, $z_{1}\in
\mathbb{C}
/L$ is a point such that $rz_{1}\in L$, thus $krz_{1}\in L$. Then we can
apply the transformation formula%
\begin{equation*}
\sigma (z+\omega )=\delta (\omega )e^{\eta (\omega )(z+\omega /2)}\sigma
(z)\ \ \text{for all \ }z\in
\mathbb{C}
\text{ and }\omega \in L,
\end{equation*}%
where $\eta :L\rightarrow
\mathbb{C}
$ is the quasiperiodic map associated to $L$, and $\delta $ is defined by%
\begin{equation*}
\delta :L\rightarrow \{\pm 1\}\text{,}\ \ \ \ \delta (\omega )=\left\{
\begin{array}{cc}
1 & \text{if }\omega /2\in L\text{,} \\
-1 & \text{if }\omega /2\notin L\text{.}%
\end{array}%
\right.
\end{equation*}%
Now using the fact that $\eta $ and $\delta $ are homomorphisms and applying
the transformation formula with $z=nz_{1}+z_{2}$ and $\omega =krz_{1}$, we
have%
\begin{eqnarray}
\frac{\sigma (z+\omega )}{\sigma (z)} &=&\delta (krz_{1})e^{\eta
(krz_{1})(nz_{1}+z_{2}+\frac{1}{2}krz_{1})}  \notag \\
&=&\delta (rz_{1})^{k}e^{\eta (z_{1})krnz_{1}}e^{\eta (z_{1})krz_{2}}e^{\eta
(z_{1})z_{1}k^{2}r^{2}/2}  \label{r2}
\end{eqnarray}%
where $nz_{1}+z_{2}\notin L$, or equivalently, $n\in
\mathbb{Z}
\backslash \Gamma $.

Now substituting (\ref{r2}) into (\ref{r1}) we have
\begin{equation*}
\Psi _{kr+n}(\Phi (z_{1}),\Phi (z_{2}))=\delta (rz_{1})^{k}\alpha
^{kn}(\alpha ^{r})^{k^{2}/2}\beta ^{k}\Psi _{n}(\Phi (z_{1}),\Phi (z_{2}))
\end{equation*}%
for all $k\in
\mathbb{Z}
$ and all $n\in
\mathbb{Z}
\backslash \Gamma $ where
\begin{equation*}
\alpha =e^{\eta (z_{1})rz_{1}}{\small \sigma (z}_{1}{\small )}^{-2r}\text{ \
and \ }\beta =e^{\eta (z_{1})rz_{2}}{\small \sigma (z}_{1}{\small )}^{r}%
{\small \sigma (z}_{2}{\small )}^{r}{\small \sigma (z}_{1}+{\small z}_{2}%
{\small )}^{-r}\text{.}
\end{equation*}%
It follows that
\begin{equation}
\Psi _{kr+n}(\mathbf{P}^{\ast })=\alpha ^{kn}(\alpha ^{r})^{k^{2}/2}\beta
^{k}\Psi _{n}(\mathbf{P}^{\ast })  \label{81}
\end{equation}%
for all $k\in
\mathbb{Z}
$ and all $n\in
\mathbb{Z}
$, since $\mathbf{P}^{\ast }=(\Phi (z_{1}),\Phi (z_{2}))$. We also note that
we remove the restriction $n\in
\mathbb{Z}
\backslash \Gamma $ here since the formula is true in this case, as
mentioned before.

Setting $k=1$ in (\ref{81}) we have%
\begin{equation}
\Psi _{r+n}(\mathbf{P}^{\ast })=\alpha ^{n}(\alpha ^{r})^{1/2}\beta \Psi
_{n}(\mathbf{P}^{\ast })\text{.}  \label{8}
\end{equation}%
Now substituting $n=0$ and then $n\mathbf{~}=1$ into (\ref{8}) we obtain
\begin{equation*}
\Psi _{r}(\mathbf{P}^{\ast })=(\alpha ^{r})^{1/2}\beta \text{,}
\end{equation*}%
and
\begin{equation*}
\Psi _{r+1}(\mathbf{P}^{\ast })=\alpha (\alpha ^{r})^{1/2}\beta \text{,}
\end{equation*}%
since $\Psi _{0}(\mathbf{P}^{\ast })=\Psi _{1}(\mathbf{P}^{\ast })=1$. Thus
\begin{eqnarray}
\alpha &=&\frac{\Psi _{r+1}(\mathbf{P}^{\ast })}{\Psi _{r}(\mathbf{P}^{\ast
})}\in K(\mathbf{P}^{\ast })\subset K^{\prime } \\
\beta &=&\frac{\Psi _{r}(\mathbf{P}^{\ast })}{(\alpha ^{r})^{1/2}}\in K(%
\mathbf{P}^{\ast })\subset K^{\prime }\text{.}
\end{eqnarray}%
By (\ref{y1}), the values $\alpha $ and $\beta $ are $\mathfrak{p}^{\prime }$%
-units in $K$. Therefore, the equations above imply that $\alpha $ and $%
\beta $ are $\mathfrak{p}^{\prime }$-units in $K^{\prime }$ such that
\begin{equation*}
r\mathbf{P}^{\ast }=[r]P_{1}^{\ast }+P_{2}^{\ast }\not\equiv \mathcal{O}~(%
\text{mod }\mathfrak{p}^{\prime })\text{,}
\end{equation*}%
and
\begin{equation*}
(r+1)\mathbf{P}^{\ast }=[r+1]P_{1}^{\ast }+P_{2}^{\ast }\not\equiv \mathcal{O%
}~(\text{mod }\mathfrak{p}^{\prime })\text{,}
\end{equation*}%
since $\mathbf{P}^{\ast }$ is a point such that $[r]P_{1}^{\ast }\equiv
\mathcal{O}$ $(\text{mod }\mathfrak{p}^{\prime })$ and $P_{1}^{\ast
}+P_{2}^{\ast }$, $P_{2}^{\ast }\not\equiv \mathcal{O}~(\text{mod }\mathfrak{%
p}^{\prime })$.

It follows that there exist $\mathfrak{p}^{\prime }$-units $\alpha $ and $%
\beta $ in $K^{\prime }$ such that
\begin{equation}
\Psi _{kr+n}(\mathbf{P}^{\ast })=\alpha ^{kn}(\alpha ^{r})^{k^{2}/2}\beta
^{k}\Psi _{n}(\mathbf{P}^{\ast })  \label{9}
\end{equation}%
for all $k$, $n\in
\mathbb{Z}
$. Note that we reduce formula (\ref{9}) modulo $\mathfrak{p}^{\prime }$.
Now using the fact that $\mathbf{P}^{\ast }\equiv \mathbf{P}~(\text{mod }%
\mathfrak{p}^{\prime })$ we have%
\begin{equation}
\Psi _{kr+n}(\mathbf{P})=a^{kn}(a^{r})^{k^{2}/2}b^{k}\Psi _{n}(\mathbf{P})
\end{equation}%
for all $k$, $n~\mathbf{\in }$ $%
\mathbb{Z}
$, where $a$ and $b$ are elements of the residue field $F^{\prime }$of $%
K^{\prime }$. On the other hand, putting $k=1$ in the above equation we have%
\begin{equation}
\Psi _{r+n}(\mathbf{P})=a^{n}(a^{r})^{1/2}b\Psi _{n}(\mathbf{P})
\label{3.13}
\end{equation}%
where (\ref{3.13}) is well-defined since $\alpha ^{r}$ is a square and so
that $a^{r}$ is a square. Thus substituting $n=0$ and then $n=1$ and solve
for $a$ we obtain
\begin{equation*}
a=\frac{\Psi _{r+1}(\mathbf{P})}{\Psi _{r}(\mathbf{P})}\text{ and }b=\frac{%
\Psi _{r}(\mathbf{P})}{(a^{r})^{1/2}}
\end{equation*}%
which shows that $a$ and $b$ are elements of the field ${\mathbb{F}}$. Note
that the values $\Psi _{k}(\mathbf{P})$ are nonzero since $\Psi _{k}(\mathbf{%
P})\equiv \Psi _{k}(\mathbf{P}^{\ast })~(\text{mod }\mathfrak{p}^{\prime })$.
\end{proof}

As a consequence of the above theorem, one can immediately obtain the
following corollary. See \cite[Theorem 9.4.1]{CS} for a proof.

\begin{corollary}
With assumptions and notation as in Theorem 3.1, the sequence $(\Psi _{n}(%
\mathbf{P}))_{n\geq 0}$ is purely periodic with period $rt$ for some integer
$t$ with $t|q-1$, where $q$ is the order of the field ${\mathbb{F}}$.
\end{corollary}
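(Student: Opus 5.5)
The plan is to read the periodicity directly off the quasi-periodicity formula (\ref{01}) of Theorem 3.1. That formula holds for all nonnegative $k,n$ with units $a,b\in\mathbb{F}^{\ast}$, where $c:=(a^{r})^{1/2}\in\mathbb{F}^{\ast}$ (we saw $a^{r}$ is a square). We seek $t$ with $\Psi_{rt+n}(\mathbf{P})=\Psi_{n}(\mathbf{P})$ for all $n\ge 0$. Taking $k=t$ in (\ref{01}) gives
\begin{equation*}
\Psi_{tr+n}(\mathbf{P})=a^{tn}\,c^{t^{2}}\,b^{t}\,\Psi_{n}(\mathbf{P}).
\end{equation*}
So it suffices to choose $t$ with $a^{t}=1$ and $c^{t^{2}}b^{t}=1$. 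Then $\Psi_{n+rt}=\Psi_{n}$ for every $n\ge 0$, which is pure periodicity with period $rt$.

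For the choice of $t$, note that $\mathbb{F}^{\ast}$ is cyclic of order $q-1$, so $a,b,c$ all have orders dividing $q-1$. Let $t$ be the least common multiple of the orders of $a$, $b$ and $c$. Then $t\mid q-1$ and $a^{t}=b^{t}=c^{t}=1$, hence $c^{t^{2}}=(c^{t})^{t}=1$, and all the factors vanish. If one wants the minimal period, let $t$ be the least positive integer with $a^{t}=1$ and $c^{t^{2}}b^{t}=1$. This $t$ divides the lcm just constructed, since the set of valid $t$ is closed under the relevant operations; equivalently, the minimal period divides any period. Hence $t\mid q-1$ as well. The statement only asks for some $t$ dividing $q-1$, so the lcm choice suffices.

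The only delicate point is the meaning of $(a^{r})^{k^{2}/2}$ when $k$ is odd. In Theorem 3.1 it is defined as $c^{k^{2}}$ with the fixed square root $c$, coming from $\alpha^{r/2}$ in characteristic zero. I would state this convention explicitly so that the exponent manipulation $c^{t^{2}}=(c^{t})^{t}$ is legitimate. I would also check that $n$ and $k$ range over nonnegative integers, so the shift by $rt$ stays inside the index set. No other obstacle is expected.
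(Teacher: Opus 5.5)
Your proof is correct and follows essentially the route the paper intends. The paper defers to Swart here, but runs the same scheme for $\Phi_n$ in Corollary 4.2: pick $t$ with $a^{t}=1$ and $(a^{r})^{t^{2}/2}b^{t}=1$. Your choice of $t$ as the lcm of the orders of $a$, $b$, $c$ in $\mathbb{F}^{\ast}$ does this with $t\mid q-1$, and including the order of $c$ is exactly what makes the half-integer exponent harmless.

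The one justification to tighten is $c\in\mathbb{F}^{\ast}$. Take it from $c=\Psi_{r}(\mathbf{P})/b$ together with $b\in\mathbb{F}^{\ast}$, as Theorem 3.1 asserts. The proof of that theorem only shows that $a^{r}$ is a square in the residue field of the extension $K^{\prime}$, and if $c$ lay outside $\mathbb{F}$ the divisibility $t\mid q-1$ would be lost.
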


This completes the proof of Theorem 1.1 for the sequence $(\Psi _{n}(\mathbf{%
P}))$.

\section{Periodicity of the Sequences $(\Phi _{n}(\mathbf{P}))_{n\geq 0}$
and $(\overline{\Omega }_{n}(\mathbf{P}))_{n\geq 0}$ Over Finite Fields}

In this section, we give the proof of Theorem 1.1 for the sequences $(\Phi
_{n}(\mathbf{P}))_{n\geq 0}$ and $(\overline{\Omega }_{n}(\mathbf{P}%
))_{n\geq 0}$ of values of the translated division polynomials of an
elliptic curve $E$ at $\mathbf{P}=(P_{1},P_{2})\in E(K)^{2}$. We also
establish the periodicity properties of these sequences and prove that the
periods of these sequences can be given by using the order of the point $%
P_{1}$, similar to the sequence $(\Psi _{n}(\mathbf{P}))_{n\geq 0}$.

\begin{theorem}
Let ${\mathbb{F}}$ be a finite field, let $E/{\mathbb{F}}$ be an elliptic
curve, and let $\mathbf{P}=(P_{1},P_{2})\in $ $E({\mathbb{F}})^{2}$. Let the
order of $P_{1}$ be $r>3$ in $E({\mathbb{F}})$ and let $(\Phi _{n}(\mathbf{P}%
))_{n\geq 0}$ and $(\overline{\Omega }_{n}(\mathbf{P}))_{n\geq 0}$ be
sequences of values of the translated division polynomials of $E$ at $%
\mathbf{P}$. Then there exist units $a$, $b$ $\in {\mathbb{F}}^{\ast }$,
depending on $\mathbf{P}$, such that%
\begin{equation}
\Phi _{kr+n}(\mathbf{P})=a^{2kn}(a^{r})^{k^{2}}b^{2k}{}\Phi _{n}(\mathbf{P})%
\text{,}  \label{6.1}
\end{equation}%
and
\begin{equation}
\overline{\Omega }_{kr+n}(\mathbf{P})=a^{3kn}(a^{r})^{3k^{2}/2}b^{3k}{}%
\overline{\Omega }\,_{n}(\mathbf{P})  \label{6.2}
\end{equation}%
for all nonnegative integers $k$ and $n$.
\end{theorem}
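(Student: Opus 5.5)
We derive both identities directly from Theorem 3.1, which gives units $a,b\in\mathbb{F}^{\ast}$ with
\[
\Psi_{kr+n}(\mathbf{P})=a^{kn}(a^{r})^{k^{2}/2}b^{k}\Psi_{n}(\mathbf{P})
\]
for all $k,n$. We then substitute this into the formulas (\ref{3.1}) and (\ref{3.2}), which express $\Phi_n$ and $\overline{\Omega}_n$ through the $\Psi$'s. Throughout we write $c_k(n)=a^{kn}(a^{r})^{k^{2}/2}b^{k}$, so that $\Psi_{kr+n}=c_k(n)\Psi_n$ and $c_k(n+j)=a^{kj}c_k(n)$.

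For $\Phi$ we apply (\ref{3.1}) at index $kr+n$:
\[
\Phi_{kr+n}=x(P_1)\Psi_{kr+n}^2-\Psi_{kr+n+1}\Psi_{kr+n-1}.
\]
The first term is $c_k(n)^2x(P_1)\Psi_n^2$. The second is $c_k(n+1)c_k(n-1)\Psi_{n+1}\Psi_{n-1}=a^{k}a^{-k}c_k(n)^2\Psi_{n+1}\Psi_{n-1}$, so both terms carry the same factor. Hence
\[
\Phi_{kr+n}=c_k(n)^2\Phi_n=a^{2kn}(a^r)^{k^2}b^{2k}\Phi_n,
\]
which is (\ref{6.1}). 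For $n=0$ this uses $\Psi_{-1}$. We will either note that Theorem 3.1 holds for all $k,n\in\mathbb{Z}$, as its proof establishes before reduction, or handle small $n$ separately by shifting $k$.

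For $\overline{\Omega}$ we use (\ref{3.2}) at index $kr+n$. The terms $\Psi_{n-1}^2\Psi_{n+2}$ and $\Psi_{n-2}\Psi_{n+1}^2$ pick up the factors $c_k(n)^3a^{-2k}a^{2k}$ and $c_k(n)^3a^{-2k}a^{2k}$ respectively, so both scale by $c_k(n)^3$. The term $\beta_1\Psi_n(a_1\Phi_n+a_3\Psi_n^2)$ scales by $c_k(n)\cdot c_k(n)^2$, using the $\Phi$ result just proved. The constants $\beta_1$, $a_1$, $a_3$ and $(2\beta_1)^{-1}$ do not depend on $n$; here $\beta_1\ne0$ because $r>3$, as noted in Remark 1.1. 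Therefore
\[
\overline{\Omega}_{kr+n}=c_k(n)^3\overline{\Omega}_n,
\]
which is (\ref{6.2}).

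The main obstacle is justifying the shifted indices $n-1$ and $n-2$ for small $n\ge 0$, which requires the relation for $\Psi$ at negative indices, together with making sense of $(a^r)^{k^2/2}$ and $(a^r)^{3k^2/2}$. For the first point we will invoke the statement (\ref{81}) for all $k,n\in\mathbb{Z}$, reduced modulo $\mathfrak{p}'$. For the second, the proof of Theorem 3.1 already shows that $a^r$ is a square in $\mathbb{F}$, so with a fixed square root the factors are well defined and consistent with $c_k(n)^2$ and $c_k(n)^3$.
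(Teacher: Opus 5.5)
Your proposal is correct and is essentially the paper's proof: substitute the symmetry (\ref{01}) of Theorem 3.1 into (\ref{3.1}) and (\ref{3.2}) at index $kr+n$, and observe that every term scales by $c_k(n)^2$, respectively $c_k(n)^3$. Your extra remarks make explicit what the paper's one-line proof leaves implicit: the shifted indices $n-1,n-2$ are covered because (\ref{81}) and its reduction hold for all $k,n\in\mathbb{Z}$, $\beta_1\neq 0$, and a fixed square root of $a^r$ keeps the half-integer exponents consistent.
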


\begin{proof}
If we replace $n$ by $kr+n$ in (\ref{3.1}) and then use equation (\ref{01}),
we derive the formula (\ref{6.1}). Similarly replacing $n$ by $kr+n$ in (\ref%
{3.2}) and then using equation (\ref{01}), we obtain the formula (\ref{6.2}).
\end{proof}

As an immediate consequence of the above result, we deduce the periodicity
of the sequences $(\Phi _{n}(\mathbf{P}))_{n\geq 0}$ and $(\overline{\Omega
}{}_{n}(\mathbf{P}))_{n\geq 0}$.

\begin{corollary}
With assumptions and notation as in Theorem 4.1, the sequences $(\Phi _{n}(%
\mathbf{P}))_{n\geq 0}$ and $(\overline{\Omega }_{n}(\mathbf{P}))_{n\geq 0}$
are purely periodic with periods $rt^{\prime }$ and $rt^{\prime \prime }$
respectively, for some integers $t^{\prime }$ and $t^{\prime \prime }$
dividing $q-1$, where $q$ is the order of the field ${\mathbb{F}}$.
\end{corollary}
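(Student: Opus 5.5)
From Theorem 4.1 I will extract the multiplicative factor that appears after shifting the index by a full block of length $r$, and show that this factor repeats with a period dividing $q-1$. I treat $(\Phi_n(\mathbf{P}))$ first; $(\overline{\Omega}_n(\mathbf{P}))$ is handled the same way.

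For $(\Phi_n(\mathbf{P}))$, formula (\ref{6.1}) gives, for all $k,n\geq 0$,
\[
\Phi_{kr+n}(\mathbf{P})=a^{2kn}(a^{r})^{k^{2}}b^{2k}\Phi_n(\mathbf{P}), \qquad a,b\in\mathbb{F}^{\ast}.
\]
Write $\lambda_{k,n}=a^{2kn}a^{rk^{2}}b^{2k}$. Let $t'$ be the order of the subgroup of $\mathbb{F}^{\ast}$ generated by $a^{2}$, $a^{r}$ and $b^{2}$. This subgroup lies in the cyclic group $\mathbb{F}^{\ast}$ of order $q-1$, so its exponent, and hence $t'$, divides $q-1$. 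Set $k=t'$. Then
\[
\lambda_{t',n}=(a^{2})^{t'n}(a^{r})^{t'^{2}}(b^{2})^{t'}=1 \quad\text{for every } n,
\]
because each of the three bases has order dividing $t'$. It follows that $\Phi_{rt'+n}(\mathbf{P})=\Phi_n(\mathbf{P})$ for all $n\geq 0$. This is a periodicity starting at $n=0$, so the sequence is purely periodic with period $rt'$.

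For $(\overline{\Omega}_n(\mathbf{P}))$ I use (\ref{6.2}) instead. The factor there is
\[
\mu_{k,n}=a^{3kn}(a^{r})^{3k^{2}/2}b^{3k}.
\]
The exponent $3k^{2}/2$ is a fraction, so I rewrite the factor using a fixed square root $c\in\mathbb{F}^{\ast}$ of $a^{r}$. Such a $c$ exists: the proof of Theorem 3.1 shows that $a^{r}$ is a square in $\mathbb{F}$, via $b=\Psi_r(\mathbf{P})/(a^{r})^{1/2}\in\mathbb{F}$. With this choice, $(a^{r})^{3k^{2}/2}=(c^{3})^{k^{2}}$. Let $t''$ be the order of the subgroup of $\mathbb{F}^{\ast}$ generated by $a^{3}$, $c^{3}$ and $b^{3}$, so again $t''\mid q-1$. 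Taking $k=t''$ gives $\mu_{t'',n}=1$ for all $n$, and therefore $(\overline{\Omega}_n(\mathbf{P}))$ is purely periodic with period $rt''$.

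The main obstacle is the meaning of the half-integer exponent $k^{2}/2$. The value of $(a^{r})^{k^{2}/2}$ must be taken consistently with the square root that was fixed in Theorem 3.1. I would settle this by reading $(a^{r})^{k^{2}/2}$ as $c^{k^{2}}$, with $c$ the square root of $a^{r}$ used in Theorem 3.1. With that reading, (\ref{01}) and hence (\ref{6.2}) hold with $c^{k^{2}}$ in place of $(a^{r})^{k^{2}/2}$, and the argument above goes through. The case $\Phi_n(\mathbf{P})=0$ or $\overline{\Omega}_n(\mathbf{P})=0$ needs no separate treatment, since both sides of the relevant identity then vanish.
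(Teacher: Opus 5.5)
Your argument is correct and is essentially the paper's. In both, you substitute $k=t'$ (resp.\ $k=t''$) into (\ref{6.1}) and (\ref{6.2}), with $t'$ chosen so that the multiplier is identically $1$, and you read $(a^{r})^{k^{2}/2}$ as $c^{k^{2}}$ for the fixed square root $c=(a^{r})^{1/2}\in\mathbb{F}^{\ast}$. The only difference is the choice of $t'$: you take the order of $\langle a^{2},a^{r},b^{2}\rangle$, which makes $t'\mid q-1$ immediate. The paper instead takes the least $t'$ with $a^{2t'}=1$ and $a^{rt'^{2}}b^{2t'}=1$, asserts without justification that this $t'$ divides $q-1$, and then further argues that $rt'$ is the least period, which the corollary does not require.
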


\begin{proof}
Let $a$ and $b$ as in Theorem 3.1 and let $t^{\prime }\geq 1$ be the
smallest integer such that%
\begin{equation}
a^{2t^{\prime }}=1\text{ and }a^{rt^{\prime 2}}b^{2t^{\prime }}=1\text{.}
\end{equation}%
As $t^{\prime }$ divides the least common multiple of the orders of $a$ and $%
b$ in ${\mathbb{F}}^{\ast }$, $t^{\prime }$ divides $q-1$. The equation (\ref%
{6.1}) implies that
\begin{equation*}
\Phi _{rt^{\prime }+n}(\mathbf{P})=a^{2t^{\prime }n}(a^{r})^{t^{\prime
2}}b^{2t^{\prime }}{}\Phi _{n}(\mathbf{P})=\Phi _{n}(\mathbf{P})
\end{equation*}%
for all $n\geq 0$. It follows that the sequence $(\Phi _{n}(\mathbf{P}%
))_{n\geq 0}$ is periodic and $rt^{\prime }$ is a period.

Now taking $n=0$ and $k=m$ in (\ref{6.1}), we obtain
\begin{equation}
\Phi _{mr}(\mathbf{P})=\Phi _{n}(\mathbf{P})=a^{rm^{2}}b^{2m}\Phi _{0}(%
\mathbf{P})\newline
\label{6.3}
\end{equation}%
for some $m\geq 1$. Thus $\Phi _{i}(\mathbf{P})=a^{rm^{2}}b^{2m}\Phi _{0}(%
\mathbf{P})$ when $i=mr$. Let $\pi \geq 1$ be the least period of the
sequence $(\Phi _{n}(\mathbf{P}))_{n\geq 0}$. Then by equation (\ref{6.3}),
we have
\begin{equation*}
\Phi _{\pi +r}(\mathbf{P})=\Phi _{r}(\mathbf{P})=a^{r}b^{2}\Phi _{0}(\mathbf{%
P})\newline
\text{.}
\end{equation*}%
It follows that the period $\pi $ must be a multiple of $r$, say $\pi
=rs^{\prime }$ for some integer $s^{\prime }\geq 1$. Then $s^{\prime
}~|~t^{\prime }$, since $rt^{\prime }$ is a period. On the other hand, by (%
\ref{6.1})
\begin{equation*}
\Phi _{n}(\mathbf{P})=\Phi _{rs^{\prime }+n}(\mathbf{P})=a^{2s^{\prime
}n}(a^{r})^{s^{\prime 2}}b^{2s^{\prime }}\Phi _{n}(\mathbf{P})\text{.}
\end{equation*}%
Thus
\begin{equation}
a^{2s^{\prime }n}(a^{r})^{s^{\prime 2}}b^{2s^{\prime }}=1\text{ \ \ for all }%
n\geq 0\text{.}  \label{6.4}
\end{equation}%
Now putting $n=1$ and then $n=2$ in (\ref{6.4}), we have $a^{2s^{\prime }}=1$
and so $(a^{r})^{s^{\prime 2}}b^{2s^{\prime }}=1$. It follows that $%
s^{\prime }\geq t^{\prime }$, and so $s^{\prime }=t^{\prime }$, which
completes the proof of the periodicity of the sequence $(\Phi _{n}(\mathbf{P}%
))_{n\geq 0}$. The proof of the periodicity of the sequence $(\overline{%
\Omega }_{n}(\mathbf{P}))_{n\geq 0}$ is similar.
\end{proof}

Corollary 4.2 tells that the sequences $(\Phi _{n}(\mathbf{P}))_{n\geq 0}$
and $(\overline{\Omega }{}{}_{n}(\mathbf{P}))_{n\geq 0}$ are purely
periodic, which completes the proof of Theorem 1.1.

In the following theorem, we give explicit formulas for the periods of the
sequences $(\Phi _{n}(\mathbf{P}))_{n\geq 0}$ and $(\overline{\Omega }{}{}%
_{n}(\mathbf{P}))_{n\geq 0}$. But first, we need a useful lemma.

\begin{lemma}[{\protect\cite[Lemma 11.1]{MW1}}]
Let $p$ be an odd prime and $d$ be an integer such that $\gcd (p,~d)=1$. Let
$\delta $ be the least positive integer such that $d^{\delta }\equiv 1~(%
\text{mod }p)$. If $\delta $ is odd, there exists no integer $x$ such that
the congruence $d^{x}\equiv -1~(\text {mod }p)$ is satisfied. But if $\delta
$ is even, the last congruence is satisfied if and only if $x$ is an odd
multiple of $\delta /2$.
\end{lemma}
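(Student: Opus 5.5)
Since $d^{\delta}\equiv 1 \pmod p$, the powers of $d$ modulo $p$ depend only on the exponent modulo $\delta$. So I will reduce $x$ modulo $\delta$ and work in the cyclic subgroup $H=\langle d\rangle\subset(\mathbb{Z}/p\mathbb{Z})^{\ast}$, which has order exactly $\delta$.

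First I show that $-1\in H$ if and only if $\delta$ is even. The group $(\mathbb{Z}/p\mathbb{Z})^{\ast}$ is cyclic, and since $p$ is odd we have $-1\neq 1$, so $-1$ is its unique element of order $2$. A cyclic group of order $\delta$ contains an element of order $2$ exactly when $2\mid\delta$. Because $H$ is a subgroup of a cyclic group, any element of order $2$ in $H$ must be $-1$. Hence if $\delta$ is odd, $H$ contains no element of order $2$, so $-1\notin H$ and the congruence $d^{x}\equiv -1\pmod p$ has no solution. This settles the first assertion.

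Now let $\delta$ be even and set $y=d^{\delta/2}$. Then $y^{2}=d^{\delta}\equiv 1\pmod p$. Since $\mathbb{Z}/p\mathbb{Z}$ is a field, this gives $y\equiv\pm 1\pmod p$. Minimality of $\delta$ rules out $y\equiv 1$, so $d^{\delta/2}\equiv -1\pmod p$.

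For the characterization, write $x=q\,\delta/2+s$ with $0\le s<\delta/2$. Then
\[
d^{x}\equiv (-1)^{q}d^{s}\pmod p.
\]
The congruence $d^{x}\equiv -1$ therefore becomes $d^{s}\equiv (-1)^{q+1}$.

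1. If $q$ is odd, the condition reads $d^{s}\equiv 1$. Minimality of $\delta$ together with $s<\delta$ forces $s=0$.
2. If $q$ is even, the condition reads $d^{s}\equiv -1$. Squaring gives $d^{2s}\equiv 1$ with $0\le 2s<\delta$, so $2s=0$ and $s=0$. But then $d^{s}=1\not\equiv -1$, a contradiction.

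Hence $d^{x}\equiv -1$ exactly when $s=0$ and $q$ is odd, that is, when $x$ is an odd multiple of $\delta/2$. The converse direction is immediate from $d^{\delta/2}\equiv -1$.

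There is no real obstacle in this argument. The only points needing care are two uses of hypotheses: that $p$ is odd, so that $-1\neq 1$, and that $\mathbb{Z}/p\mathbb{Z}$ is a field, so that $y^{2}=1$ forces $y=\pm1$.
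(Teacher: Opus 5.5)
Your proof is correct. The key steps are all sound: deriving $d^{\delta/2}\equiv -1$ from $y^{2}\equiv 1$ in the field $\mathbb{Z}/p\mathbb{Z}$, the division $x=q\,\delta/2+s$ with $0\le s<\delta/2$ (which also covers negative $x$, taking $q=\lfloor 2x/\delta\rfloor$), and the two parity cases for $q$. You also invoke that $p$ is odd exactly where it matters, namely $-1\not\equiv 1$.

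The paper gives no proof of this lemma; it is quoted from Ward, so there is no argument in the paper to compare against. Your argument can, however, be compressed. If $d^{x}\equiv -1$, then $d^{2x}\equiv 1$, so $\delta\mid 2x$.
\begin{itemize}
\item If $\delta$ is odd, this gives $\delta\mid x$, hence $d^{x}\equiv 1\not\equiv -1$, so there is no solution.
\item If $\delta$ is even, it gives $x=k\,\delta/2$ for some integer $k$, and then $d^{x}\equiv(-1)^{k}$, which equals $-1$ exactly when $k$ is odd.
\end{itemize}
This settles both assertions at once. It also makes your appeal to the cyclicity of $(\mathbb{Z}/p\mathbb{Z})^{\ast}$ in the odd case unnecessary.
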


\begin{theorem}
Let $E/%
\mathbb{Q}
$ be an elliptic curve and let $\mathbf{P}=(P_{1},P_{2})$ $\in $ $E(%
\mathbb{Q}
)^{2}$ be a pair consisting of points of infinite order. Let $p$ be an odd
prime such that $P_{1}~$and $P_{2}$ modulo $p$ are nonsingular, and suppose $%
r>3$ is the order of $P_{1}$ modulo $p$. Let $(\Phi _{n}(\mathbf{P}))_{n\geq
0}$ and $(\overline{\Omega }{}{}{}_{n}(\mathbf{P}))_{n\geq 0}$ be the
sequences of values of the translated division polynomials of $E$ at $%
\mathbf{P}$. Let $a$ and $b$ be as in Theorem 3.1. \newline
(i) Let $\varepsilon $, $\kappa $ and $\lambda $ denote the orders of $a^{2}$%
, $b^{2}$, and $a^{r}$ modulo $p$, respectively. Then the sequence $(\Phi
_{n}(\mathbf{P})\text{ mod }p)_{n\geq 0}$ is purely periodic with period $rt$
where $t=2^{\mu }\text{lcm}[\varepsilon ,~\kappa ,~\lambda ]$ and
\begin{equation*}
\mu =\left\{
\begin{array}{cc}
-1\text{,} &
\begin{array}{c}
\begin{array}{l}
\text{if }\varepsilon \text{ and }\lambda \text{ are both even, and }\lambda
\text{ is divisible } \\
\text{by\ a higher power of }2\text{ than }\varepsilon \text{ and }\kappa
\text{,}%
\end{array}
\\
\begin{array}{l}
\text{ \ \ or if }\varepsilon \text{ is odd, }\kappa \text{ and }\lambda
\text{ are both even and \ \ \ \ \ \ } \\
\text{ \ \ ord}_{2}(\kappa )=\text{ord}_{2}(\lambda )=1\text{,}%
\end{array}%
\end{array}
\\
\text{ \ }0\text{,} & \text{otherwise.\ \ \ \ \ \ \ \ \ \ \ \ \ \ \ \ \ \ \
\ \ \ \ \ \ \ \ \ \ \ \ \ \ \ \ \ \ \ \ \ \ }%
\end{array}%
\text{ }\right.
\end{equation*}%
\newline
(ii) Let $\varepsilon ^{\prime }$, $\kappa ^{\prime }$ and $\lambda ^{\prime
}$ denote the orders of $a^{3}$, $b^{3}$, and $a^{3r/2}$ modulo $p$,
respectively. Then the sequence $(\overline{\Omega }{}{}_{n}(\mathbf{P})%
\text{ mod }p)_{n\geq 0}$ is purely periodic with period $rt^{\prime }$
where $t^{\prime }=2^{\mu ^{\prime }}\text{lcm}[\varepsilon ^{\prime
},~\kappa ^{\prime },~\lambda ^{\prime }]$ and%
\begin{equation*}
\mu ^{\prime }=\left\{
\begin{array}{cc}
-1\text{,} &
\begin{array}{c}
\begin{array}{l}
\text{if }\varepsilon ^{\prime }\text{ and }\lambda ^{\prime }\text{ are
both even, and }\lambda ^{\prime }\text{ is divisible } \\
\text{by\ a higher power of }2\text{ than }\varepsilon ^{\prime }\text{ and }%
\kappa ^{\prime }\text{,}%
\end{array}
\\
\begin{array}{l}
\text{ \ or if }\varepsilon ^{\prime }\text{ is odd, }\kappa ^{\prime }\text{
and }\lambda ^{\prime }\text{ are both even and \ \ \ \ \ \ } \\
\text{ \ ord}_{2}(\kappa ^{\prime })=\text{ord}_{2}(\lambda ^{\prime })=1%
\text{,}%
\end{array}%
\end{array}
\\
\text{ \ }0\text{,} & \text{otherwise.\ \ \ \ \ \ \ \ \ \ \ \ \ \ \ \ \ \ \
\ \ \ \ \ \ \ \ \ \ \ \ \ \ \ \ \ \ \ \ \ \ \ \ \ }%
\end{array}%
\text{ }\right.
\end{equation*}
\end{theorem}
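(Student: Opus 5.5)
The plan is to reduce the statement to an elementary problem about orders of elements in the cyclic group $\mathbb{F}_p^{\ast}$, and then solve that problem with Lemma 4.3, following Ward's computation of the period of an elliptic divisibility sequence.

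\textbf{Reduction.} Reducing modulo $p$, Theorem 4.1 applies over $\mathbb{F}=\mathbb{F}_p$ with the units $a,b$ of Theorem 3.1. The argument of Corollary 4.2 then shows the following. The least period of $(\Phi_n(\mathbf{P})\bmod p)$ is $rs$, where $s\geq 1$ is the least integer such that
\begin{equation*}
a^{2s}=1\quad\text{and}\quad (a^{r})^{s^{2}}b^{2s}=1 .
\end{equation*}
These two conditions come from $n=1$ and $n=2$ in (\ref{6.4}); conversely, together they make the multiplier in (\ref{6.1}) equal to $1$ for every $n$. So part (i) amounts to proving $s=t=2^{\mu}\,\mathrm{lcm}[\varepsilon,\kappa,\lambda]$. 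The first condition says exactly that $\varepsilon\mid s$. For part (ii), (\ref{6.2}) gives in the same way the conditions $a^{3s}=1$ and $(a^{3r/2})^{s^{2}}b^{3s}=1$. Its proof should be word-for-word the same with $(a^2,b^2,a^r)$ replaced by $(a^3,b^3,a^{3r/2})$, so I treat only (i).

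\textbf{Upper bound.} Set $L=\mathrm{lcm}[\varepsilon,\kappa,\lambda]$. Then $a^{2L}=1$, and $(a^r)^{L^2}=1$ because $\lambda\mid L\mid L^2$, and $b^{2L}=1$. Hence $L$ satisfies both conditions and $s\mid L$. The remaining question is therefore whether $s$ can be a proper divisor of $L$. Note that $s$ can fail to be a multiple of $\kappa$ or $\lambda$ only when $(a^r)^{s^2}=b^{-2s}\neq 1$, that is, when the two factors are nontrivial but cancel.

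\textbf{Lower bound.} Write $u=(a^r)^{s^2}$ and $w=b^{2s}$, so that $uw=1$ and $u$, $w$ have the same order. The order of $u$ is $\lambda/\gcd(\lambda,s^2)$ and the order of $w$ is $\kappa/\gcd(\kappa,s)$. I will compare odd parts and $2$-parts separately.
\begin{itemize}
\item \emph{Odd part.} For an odd prime $\ell$, suppose $\ell^{e}\,\|\,\lambda$ or $\ell^{e}\,\|\,\kappa$ but $\ell^{e}\nmid s$. I expect the mismatch between $s^2$ in $u$ and $s$ in $w$, combined with $\varepsilon\mid s$ and the relation $(a^r)^2=(a^2)^r$, to force a contradiction. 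This should show that the odd part of $L$ divides $s$.
\item \emph{$2$-part.} The natural candidate is that $u=w^{-1}$ is an element of order $2$, i.e. $u=w=-1$ in $\mathbb{F}_p^{\ast}$. Lemma 4.3 then says exactly when $(a^r)^{x}\equiv-1$ and $(b^2)^{x}\equiv -1$ are solvable: the order must be even and $x$ must be an odd multiple of half the order. Applying this to $x=s^2$ for $\lambda$ and $x=s$ for $\kappa$, and imposing $\varepsilon\mid s$, gives precisely two configurations in which $s=L/2$ is allowed:
\begin{itemize}
\item $\varepsilon$ and $\lambda$ are both even and $\mathrm{ord}_2(\lambda)>\mathrm{ord}_2(\varepsilon),\mathrm{ord}_2(\kappa)$;
\item $\varepsilon$ is odd, $\kappa$ and $\lambda$ are even, and $\mathrm{ord}_2(\kappa)=\mathrm{ord}_2(\lambda)=1$.
\end{itemize}
Outside these configurations one gets $s=L$. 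This is the definition of $\mu$.
\end{itemize}

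\textbf{Main obstacle.} The hardest step is this $2$-adic case analysis. The exponent $s^2$ on $a^r$, as opposed to $s$ on $b^2$, makes the parity bookkeeping delicate. One must also rule out cancellation $u=w^{-1}$ with $u$ of order greater than $2$; I would attempt this by the same valuation argument used for the odd part. Finally, one has to check that in each of the two listed configurations $L/2$ really does satisfy both conditions. I would carry this out by choosing a generator $g$ of $\mathbb{F}_p^{\ast}$, writing $a^r=g^{\alpha}$ and $b^2=g^{\beta}$, and turning everything into congruences modulo $p-1$.
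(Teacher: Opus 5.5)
\textbf{There is a genuine gap, and it is exactly at the point you flag as the main obstacle.}

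The missing idea is a one-line consequence of the first condition. From $a^{2s}=1$ you get $(a^{rs})^{2}=(a^{2s})^{r}=1$, so $a^{rs}=\pm 1$. Then $u=(a^{rs})^{s}=\pm1$ and $w=u^{-1}=\pm1$. Hence $\lambda\mid 2s$ and $\kappa\mid 2s$, i.e.\ $L\mid 2s$. Together with $s\le L$ this gives $s\in\{L,L/2\}$ immediately. No separate odd-part argument is needed, and cancellation with $u$ of order greater than $2$ cannot occur. The paper starts exactly this way: before any case analysis it derives $b^{4t}=1$ and $a^{rt}=\pm1$ from $a^{2t}=1$.

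\textbf{The case split you propose is the wrong one, and this is the more serious problem.}

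Your claim that $s$ can fail to be a multiple of $\lambda$ only when $u=w^{-1}\neq 1$ is false, because $u=1$ only gives $\lambda\mid s^{2}$. For example, take $a$ of order $4$ in $\mathbb{F}_p^{\ast}$, $r$ odd, and $b=1$. Then $\varepsilon=2$, $\lambda=4$, $\kappa=1$, $L=4$, and $s=2$ with $u=w=1$, yet $\lambda\nmid s$.

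In general, the first configuration in the definition of $\mu$ is exactly the situation $a^{rs}=-1$ with $s$ even, where $u=w=1$. Your case $u=w=-1$ instead forces $a^{rs}=-1$ with $s$ odd. That gives $\mathrm{ord}_2(\lambda)=\mathrm{ord}_2(\kappa)=1$ and $\varepsilon\mid s$ odd, which is only the second configuration. So running Ward's lemma (Lemma 4.1) on $u=w=-1$ as you describe never produces the first configuration, and your bookkeeping would wrongly give $s=L$ there.

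The split that works, and the one the paper uses, is on signs. Since $w=(a^{rs})^{s}$, there are three possibilities:
\begin{itemize}
\item $(a^{rs},b^{2s})=(1,1)$: then $L\mid s$, so $s=L$.
\item $(a^{rs},b^{2s})=(-1,1)$ with $s$ even: then $\mathrm{ord}_2(\lambda)=\mathrm{ord}_2(s)+1\geq 2$, which exceeds both $\mathrm{ord}_2(\varepsilon)$ and $\mathrm{ord}_2(\kappa)$. Also $\varepsilon$ is even, because $\lambda\mid 2\varepsilon$.
\item $(a^{rs},b^{2s})=(-1,-1)$ with $s$ odd: this is the second configuration.
\end{itemize}
Applying the same trichotomy to $x=L/2$ shows that $L/2$ satisfies both conditions exactly in these two configurations, which gives the formula for $\mu$.

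For (ii) the same argument goes through, because the relation $(a^{3r/2})^{2}=(a^{3})^{r}$ plays the role of $(a^{r})^{2}=(a^{2})^{r}$.
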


\begin{proof}
We give the proof only for the case (i), the other case can be proved in
similar way. Let $a$ and $b$ as in Theorem 3.1, and let $t\geq 1$ be the
smallest integer such that%
\begin{equation}
a^{2t}\equiv 1\text{ \ \ and \ }a^{rt^{2}}b^{2t}\equiv 1\text{ }(\text{mod }%
p)\text{.}  \label{a4}
\end{equation}%
Since $a^{2t}\equiv 1$ $(\text{mod }p)$, we have $a^{2rt}\equiv 1$ $(\text{%
mod }p)$, and so $a^{-2rt^{2}}\equiv b^{4t}$ $\equiv 1~(\text{mod }p)$. It
follows that $b^{4t}$ $=(b^{2t})^{2}$ $\equiv 1$ $(\text{mod }p)$. Thus $%
b^{2t}$ $\equiv 1$ or $-1$ $(\text{mod }p)$.

Suppose that $b^{2t}$ $\equiv 1$ $(\text{mod }p)$. Then $(a^{r})^{t^{2}}%
\equiv 1$ $(\text{mod }p)$ by (\ref{a4}). It follows that $a^{rt}\equiv 1$
or $-1$ $(\text{mod }p)$. Thus we have two cases. From now on, let $%
\varepsilon $, $\kappa $, and $\lambda $ denote the orders of $a^{2}$, $%
b^{2} $, and $a^{r}$ modulo $p$.

First suppose that $a^{rt}\equiv 1$ $(\text{mod }p)$ and let $s=$ $\text{lcm}%
[\varepsilon ,~\kappa ,~\lambda ]$. Then (\ref{a4}) implies that $%
\varepsilon ~|~t$, $\kappa ~|~t$, and $\lambda ~|~t$, and hence $s~|~t$. On
the other hand,
\begin{equation*}
a^{2s}\equiv b^{2s}\equiv a^{rs}\equiv 1\text{ }(\text{mod }p).
\end{equation*}%
Thus
\begin{equation}
a^{2s}\equiv 1\text{ and \ }a^{r}{}^{s^{2}}b^{2s}\equiv 1\text{ }(\text{mod }%
p)  \label{a7}
\end{equation}%
and so $t$ $|~s$, since $t$ is the smallest integer such that $a^{2t}\equiv
1 $, and $a^{r}{}^{t^{2}}b^{2t}\equiv 1$ $(\text{mod }p)$ by (\ref{a4}).
Hence $s=t$.

Assume now that $a^{rt}\equiv -1$ $(\text{mod }p)$. Then by (\ref{a4}), $%
(-1)^{t}\equiv 1$ $(\text{mod }p)$ as $b^{2t}$ $\equiv 1$ $(\text{mod }p)$,
and so $t$ must be even. On the other hand, by Lemma 4.1, $\lambda $ is
even, and $t$ is an odd multiple of $\lambda /2$, i.e., $\lambda $ is
divisible by $2^{x}$, with $x\geq 2$, since $t$ is even. Now let $s=$ $\frac{%
1}{2}\text{lcm}[\varepsilon ,~\kappa ,~\lambda ]$. Then $s$ must be even
since $\lambda $ is divisible by $2^{x}$, with $x\geq 2$. On the other hand,
as $a^{2s}$, $b^{2s}$ and $a^{rs}\equiv \pm 1$ $(\text{mod }p)$, it follows
that $s~|~t$.

We also note that $a^{2s}\equiv 1~(\text {mod }p)$ if and only if $s$ is a
multiple of $\varepsilon $ if and only if $\kappa $ or$~\lambda $ is
divisible by a higher power of $2$ than $\varepsilon $. Similarly, $%
b^{2s}\equiv 1~(\text {mod }p)$ if and only if $s$ is a multiple of $\kappa $
if and only if $\varepsilon $ or$~\lambda $ is divisible by a higher power
of $2$ than $\kappa $. It follows that $a^{2s}\equiv b^{2s}\equiv 1$ $(%
\text {mod }p)$ if and only if $\lambda $ is divisible by\ a higher power of
$2$ than $\varepsilon $ and $\kappa $, and also, $(a^{r})^{s}\equiv -1~(%
\text {mod }p)$ if and only if $\lambda $ is even, and $s$ is an odd
multiple of $\lambda /2$ by Lemma 4.1.

Therefore if $\varepsilon $ and $\lambda $ are both even and $\lambda $ is
divisible by a higher power of $2$ than $\varepsilon $ and $\kappa $, then $%
a^{2s}\equiv b^{2s}\equiv 1$ $(\text{mod }p)$, and $a^{r}{}^{s}\equiv -1~(%
\text{mod }p)$. Thus
\begin{equation}
a^{2s}\equiv 1\text{, and \ }a^{r}{}^{s^{2}}b^{2s}\equiv 1\text{ }(\text{mod
}p)
\end{equation}%
since $s$ is even. Hence $t~|~s$, since $t$ is the smallest integer such
that $a^{2t}\equiv 1$, and $a^{r}{}^{t^{2}}b^{2t}\equiv 1$ $(\text{mod }p)$,
by (\ref{a4}). It follows that $s=t$.

Suppose now that $b^{2t}$ $\equiv -1$ $(\text{mod }p)$. Then $t$ must be odd
and $a^{r}{}^{t^{2}}\equiv -1$ $(\text{mod }p)$ by (\ref{a4}). Thus $%
a^{r}{}^{t}\equiv -1$ $(\text{mod }p)$. It follows that $\kappa $ and $%
\lambda $ are both even, and $t$ is an odd multiple of both $\kappa /2$ and $%
\lambda /2$ by Lemma 4.1. On the other hand, $\varepsilon $ must be odd
since $\varepsilon ~|~t$ by (\ref{a4}). But if $s=$ $\frac{1}{2}\text{lcm}%
[\varepsilon ,~\kappa ,~\lambda ]$, then $s~|~t$, and also $s$ must be odd,
since $t$ is odd. Thus $s$ must be an odd multiple of both $\kappa /2$ and $%
\lambda /2$, say $s=\frac{\kappa }{2}m$ and $s=\frac{\lambda }{2}n$ for odd
integers $m$ and $n$, since $\varepsilon $ is odd. Furthermore, ord$%
_{2}(\kappa /2)=$ ord$_{2}(\lambda /2)=1$. Thus if $b^{2t}$ $\equiv -1$ and $%
a^{r}{}^{t^{2}}\equiv -1$ $(\text{mod }p)$, then we have $\varepsilon $ is
odd, and also $\kappa $ and $\lambda $ are both even and both divisible by
exactly $2$.

On the other hand, if $\kappa $ and $\lambda $ are both even and both
divisible by exactly $2$, then we can write $\kappa =2u$, $\lambda =2v$, for
odd integers $u$ and $v$. Thus if $\varepsilon $ is odd, then $s=$ $\text{lcm%
}[\varepsilon ,~\frac{\kappa }{2},~\frac{\lambda }{2}]=\text{lcm}%
[\varepsilon ,~u,~v]$. Since $s=\varepsilon k=uw=vz$, for some odd integers $%
k$, $w$, and $z$, we can write $s=\varepsilon k$ $=\frac{\kappa }{2}w$ $=%
\frac{\lambda }{2}z$, Thus $s$ is an odd multiple of $\frac{\kappa }{2}$ and
$\frac{\lambda }{2}$. Hence%
\begin{equation*}
b^{2s}\equiv a^{rs}\equiv -1\text{ }(\text{mod }p)
\end{equation*}%
by Lemma 4.1. Therefore we have
\begin{equation*}
a^{2s}\equiv 1\text{ and \ }a^{r}{}^{s^{2}}b^{2s}\equiv 1\text{ }(\text{mod }%
p)\text{,}
\end{equation*}%
where the first congruence follows from the fact that $a^{2s}\equiv 1~(\text{%
mod }p)$ if and only if $\kappa $ or$~\lambda $ is divisible by a higher
power of $2$ than $\varepsilon $. Therefore $t~|~s$, since $t$ is the
smallest integer such that $a^{2t}\equiv 1$, and $a^{r}{}^{t^{2}}b^{2t}%
\equiv 1$ $(\text{mod }p)$. Thus $s=t$.
\end{proof}

\section{Periodicity modulo $p^{l}$}

Let $E/%
\mathbb{Q}
$ be an elliptic curve, let $P\in $ $E(\mathbb{Q})$ be a point of infinite
order, and let $p$ be an odd prime such that $P~$modulo $p$ is nonsingular.
Let $(\Psi _{n}(P))_{n\geq 0}$ be the sequence of values of division
polynomials evaluated at a point $P\in $ $E(K)$. Shipsey \cite{RS} showed
that the sequence $(\Psi _{n}(P)\text{ mod }p^{2})_{n\geq 0}$ is periodic.
Furthermore, Ayad \cite{MA} and Swart \cite{CS} generalized the periodicity
properties of this sequence to the case of modulo prime powers $p^{l}$ for $%
l\geq 2$. Silverman \cite[Theorem 14]{JS3} used the Mazur-Tate $\sigma $%
-function to prove the periodicity of $(\psi _{kr}(P)\text{ mod }%
p^{l})_{k\geq 1}$, when $E$ has good ordinary reduction. In \cite{GB}, the
authors used a result of Ayad \cite[Th\'{e}or\`{e}me C]{MA} to show that the
sequences of values of the division polynomials $(\Phi _{n}(P)\text{ mod }%
p^{l})_{n\geq 0}$ and $(\overline{\Omega }_{n}(P)\text{ mod }p^{l})_{n\geq
0} $ are purely periodic. In this section, we generalize these results to
the sequences of values of the translated division polynomials. However, we
first need a result of Swart \cite[Theorem 9.3.2]{CS}.

\begin{theorem}[{\protect\cite[Theorem 9.3.2]{CS}}]
Let $(W(n))_{n\geq 0}$ be a Somos $4$ sequence with $\tau _{1}$, $W(0)\neq 0$%
, let $E/%
\mathbb{Q}
$ be the associated elliptic curve, and let $\mathbf{P}=(P_{1},P_{2})\in $ $%
E(\mathbb{Q})^{2}$ be the associated rational point consisting of points of
infinite order. Let $p$ be an odd prime such that $P_{1}$ and $P_{2}~$modulo
$p$ are nonsingular, let the order of $P_{1}$ modulo $p$ be $r>3$, and let $%
p^{v}$ be the highest power of $p$ such that $[r]P_{1}\equiv \mathcal{O}$ $(%
\text{mod}$ $p^{v})$. Assume further that $P_{2}$, $P_{1}\pm P_{2}\not\equiv
$ $\mathcal{O}$ $(\text{mod~}p)$ and that ord$_{p}\left( \frac{W(1)}{W(0)}%
\right) $ $\geq 0$. Then there exist integers $u_{l}$ and $v_{l}$,
relatively prime to $p$, such that for all nonnegative integers $k$, $n$,
and all $l\geq v$,
\begin{equation}
W(kp^{l-v}r+n)\equiv u_{l}^{kn}(u_{l}^{r_{l}})^{k^{2}/2}v_{l}^{k}{}~W(n)~(%
\text{mod }p^{l})  \label{y}
\end{equation}%
where $r_{l}=p^{l-v}r$.
\end{theorem}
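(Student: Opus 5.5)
The statement is Swart's result \cite[Theorem 9.3.2]{CS}. I would prove it by transporting the argument of Theorem 3.1 from the residue field $\mathbb{F}_p$ to the ring $\mathbb{Z}/p^l\mathbb{Z}$, using Hone's identification of the Somos 4 sequence with a translated division polynomial sequence. The overall plan has three steps.

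\textbf{Step 1: reduce to $\Psi_n(\mathbf{P})$.} By \cite[Theorem 1.1]{AH1} (the relation referred to later as (\ref{14})), there are constants $A$ and $B$, determined by $W(0)$ and $W(1)/W(0)$, such that
\[
W(n)=A\,B^{n}\,\Psi_n(\mathbf{P}).
\]
Here $A=W(0)$ and $B$ is essentially $W(1)/W(0)$. The hypotheses on $\mathrm{ord}_p$ and the non-vanishing of $P_2,P_1\pm P_2$ modulo $p$ make every $\Psi_n(\mathbf{P})$ and every $W(n)$ a $p$-adic integer, as explained in Remark 1.2.

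Once we have a congruence of the shape (\ref{y}) for $\Psi$ with units $a_l,b_l$, we get
\[
W(kr_l+n)=A B^{kr_l+n}\Psi_{kr_l+n}\equiv a_l^{kn}(a_l^{r_l})^{k^2/2}(b_lB^{r_l})^k\, W(n).
\]
So we may take $u_l=a_l$ and $v_l=b_lB^{r_l}$. If $B$ is not a $p$-unit, the factor $B^{n}$ appears on both sides and can be absorbed, since the relation is just multiplied through by it.

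\textbf{Step 2: the congruence for $\Psi$ modulo $p^l$.} By Ward's result in Remark 1.1, the order of $P_1$ modulo $p^l$ is exactly $r_l=p^{l-v}r$ for $l\ge v$. So $[r_l]P_1$ lies in the kernel of reduction modulo $p^l$. I would rerun the $\sigma$-function argument of Theorem 3.1 with $r$ replaced by $r_l$:

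\begin{itemize}
\item Over a finite extension of $\mathbb{Q}_p$, replace $P_1$ by a nearby point $P_1^{*}$ with $P_1^*\equiv P_1 \pmod{p^l}$ and $[r_l]P_1^*=\mathcal{O}$ exactly.
\item Embed into $\mathbb{C}$.
\item Apply the quasi-periodicity of $\sigma$ with $\omega=kr_lz_1$ to get the exact identity (\ref{81}) with $r_l$ in place of $r$, where $\alpha=\Psi_{r_l+1}/\Psi_{r_l}$ and $\beta=\Psi_{r_l}/(\alpha^{r_l})^{1/2}$.
\item Reduce modulo $p^l$ rather than modulo $\mathfrak{p}$. This uses that the $\Psi_n$ are integral polynomial expressions in the coordinates, so $\mathbf{P}^*\equiv\mathbf{P}\pmod{p^l}$ forces $\Psi_n(\mathbf{P}^*)\equiv\Psi_n(\mathbf{P})\pmod{p^l}$.
\end{itemize}

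The units $a_l,b_l$ then come out as $\Psi_{r_l+1}(\mathbf{P})/\Psi_{r_l}(\mathbf{P})$ and $\Psi_{r_l}(\mathbf{P})/(a_l^{r_l})^{1/2}$ modulo $p^l$. These are $p$-units by (\ref{y1}), because $[r_l]P_1+P_2\equiv P_2\not\equiv\mathcal{O}$ and $[r_l+1]P_1+P_2\equiv P_1+P_2\not\equiv\mathcal{O}$ modulo $p$. Finally, take integer representatives of $a_l$ and $b_l$, which are prime to $p$.

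\textbf{Main obstacle: exact torsion lifting.} The hard part is the lift. We need $P_1^*$ to be exactly $r_l$-torsion while agreeing with $P_1$ to precision $p^l$, not just modulo $\mathfrak{p}$. When $p\mid r_l$ the Teichmüller-type limit used in Theorem 3.1 is no longer available. My first attempt would be to write $[r_l]P_1=Q\in E_l$, the level-$l$ kernel of reduction. Then solve $[r_l]T=-Q$ for some $T\in E_l$, working in the formal group after a finite ramified extension, so that $P_1^*=P_1+T$. The point to check is whether $T$ can be taken in $E_l$, given that $Q\in E_l$ in the first place.

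If this lifting fails, I would use an elementary fallback. Start from the recurrence (\ref{s}). Show that $n\mapsto \Psi_{r_l+n}/\Psi_n$ modulo $p^l$ has the form $a_l^{n}c_l$ by induction on $n$, using the four-term relation and the fact that $\beta_1,\beta_2$ are $p$-units when $r>3$. This is essentially Swart's explicit-formula route.
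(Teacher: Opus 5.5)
\textbf{Gap: the exact torsion lift does not exist for $l>v$.} The paper does not prove this statement itself. It quotes Swart's explicit-formula proof \cite[Theorem 9.3.2]{CS}, and afterwards goes from $W$ to $\Psi$, the reverse of your Step 1. So everything rests on your Step 2, and its main route cannot work.

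For $l>v$ there is no point $P_1^{*}$, over any finite extension of $\mathbb{Q}_p$, with $[r_l]P_1^{*}=\mathcal{O}$ and $P_1^{*}\equiv P_1 \pmod{p^l}$. To see this, write $P_1^{*}=P_1+T$, with $T$ in the formal group and its parameter of normalized valuation at least $l$. Since $p$ is odd and $l\ge 1$, the formal logarithm is an isometry there. Hence $\log([r_l]T)=r_l\log T$ has valuation at least $2l-v$. By contrast, $\log([r_l]P_1)=p^{l-v}\log([r]P_1)$ has valuation exactly $l$, by the definition of $v$. So $[r_l]T=-[r_l]P_1$ forces $l\le v$.

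The same computation with any $N$ in place of $r_l$ shows that no torsion point at all is congruent to $P_1$ modulo $p^{v+1}$. Transporting the exact identity (\ref{81}) through a torsion lift therefore gives (\ref{y}) at best for $l=v$. The point you flagged has a negative answer.

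\textbf{The fallback omits the essential inputs.} It now has to carry the whole proof, and as described it leaves out both things that matter.

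First, the recurrence (\ref{s}) knows nothing about $r_l$. It is of order four, so it needs four consecutive matches $\Psi_{r_l+n}\equiv a_l^{n}c_l\Psi_n$. The indices $n=0,1$ merely define $a_l$ and $c_l$. The other two (say $n=-1,2$) are exactly where $[r_l]P_1\equiv\mathcal{O}\pmod{p^l}$ must enter. One way is via (\ref{3.4}): since $[r_l+n]P_1+P_2\equiv[n]P_1+P_2 \pmod{p^l}$, you get $\Psi_{r_l+n+1}\Psi_{r_l+n-1}\Psi_n^2\equiv\Psi_{n+1}\Psi_{n-1}\Psi_{r_l+n}^2 \pmod{p^l}$ whenever $p\nmid\Psi_n(\mathbf{P})$.

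Second, every inductive step divides by some $\Psi_m(\mathbf{P})$, and $\beta_1,\beta_2$ being units does nothing for that. If $P_2 \bmod p$ lies in $\langle P_1 \bmod p\rangle$, then $p\mid\Psi_m(\mathbf{P})$ along a whole residue class modulo $r$. Carrying the congruence past those indices needs a separate argument. For instance, you could use identities for translation by $[2]P_1$ and $[3]P_1$, or the net recurrence behind \cite[Theorem 1.13]{AK}, together with $r>3$.

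\textbf{Step 1 fails when $W(1)/W(0)$ is not a unit.} In that case nothing is absorbed: $v_l=b_lB^{r_l}$ is divisible by $p$. In fact no admissible pair $u_l,v_l$ exists there. With $k=1$, $n=0$ and $p\nmid W(0)$, the left side is $W(r_l)=W(0)B^{r_l}\Psi_{r_l}(\mathbf{P})\equiv 0 \pmod p$, while the right side of (\ref{y}) is a unit. So you must assume $W(1)/W(0)\in\mathbb{Z}_p^{\ast}$.
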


From this, she immediately deduces that $(W(n)~\text{mod }p^{l})_{n\geq 0}$
is periodic with period $p^{l-v}rt_{l}$ for some integer $t_{l}$ dividing $%
p^{l-1}(p-1)$ for all positive integers $l$, see \cite[Theorem 9.4.1]{CS}.
In the following theorem, we restate this result in terms of $\Psi _{n}(%
\mathbf{P})$ polynomials. We also note that this result could follow from
the symmetry results of Stange \cite[Theorem 10.2.3]{SKT} and Akbary et al.
\cite[Theorem 1.13]{AK}.

\begin{theorem}[\protect\cite{CS}]
Let $E/%
\mathbb{Q}
$ be an elliptic curve and let $\mathbf{P}=(P_{1},P_{2})$ $\in $ $E(%
\mathbb{Q}
)^{2}$ be a pair consisting of points of infinite order. Let $p$ be an odd
prime such that $P_{1}$ and $P_{2}~$modulo $p$ are nonsingular. Let $r>3$ be
the order of $P_{1}$ modulo $p$ and let $p^{v}$ be the highest power of $p$
such that $[r]P_{1}\equiv \mathcal{O}$ $(\text{mod }$ $p^{v})$, and write $%
r_{l}=p^{l-v}r$ for $l\geq v$. Let $(\Psi _{n}(\mathbf{P}))_{n\geq 0}$ be
the sequence of values of the translated division polynomials of $E$ at $%
\mathbf{P}$. Then there exist integers $a_{l}$ and $b_{l}$, relatively prime
to $p$, such that
\begin{equation}
\Psi _{kp^{l-v}r+n}(\mathbf{P})\equiv
a_{l}^{kn}(a_{l}^{r_{l}})^{k^{2}/2}b_{l}^{k}{}~\Psi _{n}(\mathbf{P})~(\text{%
mod~}p^{l})  \label{11}
\end{equation}%
for all nonnegative integers $k$, $n$ and all $l\geq v$.
\end{theorem}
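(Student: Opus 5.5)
The plan is to deduce the statement from Swart's Theorem 5.1 via Hone's dictionary between Somos $4$ sequences and translated division polynomials. I would take $W(n)=\Psi_{n}(\mathbf{P})$ itself. By the recurrence (\ref{s}) this is a Somos $4$ sequence with $\tau_{1}=\beta_{1}^{2}$ and $\tau_{2}=-\beta_{2}$. It has $W(0)=1\neq 0$ and $W(1)=1$, so $\mathrm{ord}_{p}(W(1)/W(0))=0\geq 0$. Since $r>3$, neither $P_{1}$ nor $[2]P_{1}$ reduces to $\mathcal{O}$, so $\beta_{1}\not\equiv 0 \pmod p$; in particular $\tau_{1}\neq 0$. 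Thus the numerical hypotheses of Theorem 5.1 hold.

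The key step is to check that the elliptic curve and point which Hone's construction (\cite[Theorem 1.1]{AK1}, as used in Theorem 5.1) attaches to this Somos $4$ sequence are $E$ and $\mathbf{P}=(P_{1},P_{2})$ themselves, possibly after an isomorphism of Weierstrass models over $\mathbb{Q}$ that is $p$-integral. Hone's sigma-function solution has exactly the shape of Stange's formula (\ref{7}),
\begin{equation*}
W(n)=A\,B^{n}\,\frac{\sigma(nz_{1}+z_{2})}{\sigma(z_{1})^{n^{2}}},
\end{equation*}
so comparing with (\ref{10}) identifies $W(n)=\Psi_{n}(\mathbf{P})$ up to the factor $AB^{n}$. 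Matching the initial values $W(0)=W(1)=1$ forces that factor to be trivial. A gauge factor of this form would in any case only change $b_{l}$ in (\ref{11}), so nothing is lost either way.

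Next come the hypotheses of Theorem 5.1 that are not numerical. The nonsingularity of $P_{1},P_{2}$ modulo $p$, the order $r>3$, and the definition of $v$ transfer verbatim. The conditions $P_{2},P_{1}\pm P_{2}\not\equiv\mathcal{O} \pmod p$ are the standing assumptions under which the $\Psi_{n}(\mathbf{P})$ are defined in Section 2 (they make $\Psi_{-1}$ and $\Psi_{2}$ $p$-adic units or integers). I would also invoke (\ref{y1}), which guarantees that every $\Psi_{n}(\mathbf{P})$ is $p$-integral, so congruences modulo $p^{l}$ make sense. Theorem 5.1 then gives integers $u_{l},v_{l}$ prime to $p$ with (\ref{y}). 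Setting $a_{l}=u_{l}$ and $b_{l}=v_{l}$ yields (\ref{11}). The half-integer exponent $k^{2}/2$ is interpreted exactly as in Theorem 5.1, that is, with $a_{l}^{r_{l}}$ a square modulo $p^{l}$.

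The main obstacle is the identification in the second paragraph. One must check that Hone's curve and points coincide with $(E,\mathbf{P})$ rather than with an isogenous or twisted model, and that any change of coordinates is $p$-integral. If that proves awkward, I would instead rerun Swart's argument for Theorem 9.3.2 of \cite{CS} directly on $\Psi_{n}(\mathbf{P})$. That argument only uses the recurrence (\ref{s}) with $\beta_{1}\beta_{2}$ a $p$-unit, the integrality of the terms, and the fact that $P_{1}$ has order $r_{l}=p^{l-v}r$ modulo $p^{l}$ (Ward, Remark 1.1). Those are exactly the inputs available here.
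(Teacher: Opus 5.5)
Your proposal has a genuine gap at its central step: you never establish that the curve and point Theorem 5.1 attaches to the sequence are $E$ and $\mathbf{P}$ in a suitable model. It also relies on hypotheses the statement does not contain.

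\textbf{How it compares with the paper.} The paper's proof is a one-liner citing the finite-field identity (\ref{01}) and induction on $k$. The induction only reduces (\ref{11}) to the case $k=1$, via $\Psi_{(k+1)r_l+n}=\Psi_{r_l+(kr_l+n)}$ and $k^2/2+k+\tfrac12=(k+1)^2/2$. Since (\ref{01}) holds only in $\mathbb{F}$, the modulo-$p^l$ content really rests on the attribution to Swart. So feeding $W(n)=\Psi_n(\mathbf{P})$ into Theorem 5.1, which gives all $k$ at once, is a natural route. Your preliminary checks are fine: (\ref{s}) makes $W$ a Somos $4$ sequence with $\tau_1=\beta_1^2\neq 0$ and $\tau_2=-\beta_2$, and $W(0)=W(1)=1$.

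\textbf{The identification step.} This is the step you yourself call the main obstacle. The hypotheses of Theorem 5.1, and the integers $r$, $v$ in its conclusion, refer to the curve and point that the Somos-to-curve construction attaches to the sequence. So you must show these are $E$ and $\mathbf{P}$ in a model $p$-integrally isomorphic to the given one.
\begin{itemize}
\item Saying that Hone's $AB^n\sigma(nz_1+z_2)/\sigma(z_1)^{n^2}$ ``has the shape of'' (\ref{7}) presupposes that both sigma functions use the same lattice and the same $z_1,z_2$. That is exactly what is at issue.
\item What you need is a uniqueness statement: the Somos data determine the lattice itself, not just its homothety class, together with $\wp(z_1)$ and $\wp(z_2)$. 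Note that replacing $(\mathbf{z},L)$ by $(c\mathbf{z},cL)$ multiplies $\Omega_n$ by $c^{n-n^2}$, which is not a gauge factor $AB^n$ unless $c=\pm1$. This can presumably be extracted from Hone's explicit formulas for $g_2,g_3$ in terms of $\tau_1,\tau_2$ and the conserved quantity, but it has to be done.
\item Even then, Hone's curve is $Y^2=4X^3-g_2X-g_3$, related to your model by $X=x+b_2/12$ and $Y=2y+a_1x+a_3$. This is harmless for $p\geq 5$. For $p=3$ the translation need not be $3$-integral, and then nonsingular reduction, $r$ and $v$ cannot simply be transported.
\item Your fallback of rerunning Swart's proof is a pointer, not an argument. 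Any recurrence-based proof must get past indices where $\Psi_{m-2}(\mathbf{P})\equiv 0\pmod p$. There (\ref{s}) cannot be solved for $\Psi_{m+2}(\mathbf{P})$ modulo $p^l$, and nothing in your list of inputs handles this.
\end{itemize}

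\textbf{Missing hypotheses.} Theorem 5.1 needs $P_2$, $P_1\pm P_2\not\equiv\mathcal{O}\pmod p$. These are not hypotheses of the statement. Nor are they the standing assumptions of Section 2, which are $P_1,P_2\neq\mathcal{O}$ and $P_2\pm P_1\neq\mathcal{O}$ in $E(K)$, not modulo $p$. You must add them explicitly. Some such hypothesis is needed anyway for the terms to be $p$-integral: the formula for $\Psi_2$ in Section 2 gives $\Psi_2(\mathbf{P})=x(P_1)-x(P_1+P_2)$, which is not $p$-integral when $P_1+P_2\equiv\mathcal{O}\pmod p$. Likewise, (\ref{y1}) is a criterion for vanishing modulo $\mathfrak{p}$, not an integrality statement. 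Integrality of the $\Psi_n(\mathbf{P})$ has to come from the argument indicated in Remark 1.2.
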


\begin{proof}
The proof uses equation (\ref{01}) and induction on $k$.
\end{proof}

Theorem 8.5.1 of \cite{CS} and this theorem show that $(\Psi _{n}(\mathbf{P}%
)~\text{mod }p^{l})_{n\geq 0}$ is periodic with period $p^{l-v}rt_{l}$ for
some integer $t_{l}$ dividing $p^{l-1}(p-1)$ for all positive integers $l$.

We will prove that the sequences $(\Phi _{_{n}}(\mathbf{P})\text{ mod }%
p^{l})_{n\geq 0}$ and $(\overline{\Omega }{}{}{}\,_{_{n}}(\mathbf{P})\text{
mod }p^{l})_{n\geq 0}$ of values of the translated polynomials of $E$ at $%
\mathbf{P}\in $ $E(\mathbb{Q})^{2}$ are purely periodic. The following
theorem shows that these sequences have similar properties to the sequence $%
(\Psi _{n}(\mathbf{P})\text{ mod }p^{l})_{n\geq 0}$.

\begin{theorem}
Let $E/%
\mathbb{Q}
$ be an elliptic curve and let $\mathbf{P}=(P_{1},P_{2})$ $\in $ $E(%
\mathbb{Q}
)^{2}$ be a pair consisting of points of infinite order. Let $p$ be an odd
prime such that $P_{1}$ and $P_{2}~$modulo $p$ are nonsingular. Let $r>3$ be
the order of $P_{1}$ modulo $p$ and let $p^{v}$ be the highest power of $p$
such that $[r]P_{1}\equiv \mathcal{O}$ $(\text{mod }$ $p^{v})$, and write $%
r_{l}=p^{l-v}r$ for $l\geq v$. Let $(\Phi _{n}(\mathbf{P}))_{n\geq 0}$, and $%
(\overline{\Omega }{}{}{}_{n}(\mathbf{P}))_{n\geq 0}$ be the sequences of
values of the translated division polynomials of $E$ at $\mathbf{P}$ as in (%
\ref{2}). Then there exist integers $a_{l}$ and $b_{l}$, relatively prime to
$p$, such that
\begin{equation}
\Phi _{kp^{l-v}r+n}(\mathbf{P})\equiv
a_{l}^{2kn}(a_{l}^{r_{l}})^{k^{2}}b_{l}^{2k}{}~\Phi _{n}(\mathbf{P})~(\text{%
mod }p^{l})\text{,}  \label{12}
\end{equation}%
and
\begin{equation}
\overline{\Omega }_{kp^{l-v}r+n}(\mathbf{P})\equiv
a_{l}^{3kn}(a_{l}^{r_{l}})^{3k^{2}/2}b_{l}^{3k}{}\overline{\Omega }\,_{n}(%
\mathbf{P})~(\text{mod }p^{l})\text{ }  \label{13}
\end{equation}%
for all nonnegative integers $k$, $n$ and all $l\geq v$.
\end{theorem}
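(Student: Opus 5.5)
We deduce Theorem 5.3 from Theorem 5.2 exactly as Theorem 4.1 was deduced from Theorem 3.1. The formulas (\ref{3.1}) and (\ref{3.2}) express $\Phi_n$ and $\overline{\Omega}_n$ as polynomials in $x(P_1)$, $a_1,a_3$, $(2\beta_1)^{-1}$ and the shifted values $\Psi_{n-2},\dots,\Psi_{n+2}$. Each monomial is homogeneous of degree $2$ (for $\Phi_n$) or degree $3$ (for $\overline{\Omega}_n$) in the $\Psi$'s. The idea is to replace $n$ by $kr_l+n$ and apply the congruence (\ref{11}) to every $\Psi$-factor.

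First we check integrality. Since $p$ is odd and $P_1,P_2$ are nonsingular modulo $p$, all coefficients involved are $p$-integral. Moreover $\beta_1=2y_1+a_1x_1+a_3$ is a $p$-unit, because $r>3$ means $[2]P_1\not\equiv\mathcal{O}$ modulo $p$. By Remark 1.2 the values $\Psi_n(\mathbf{P})$ are $p$-integral as well. Hence (\ref{3.1}) and (\ref{3.2}) hold as identities in $\mathbb{Z}_{(p)}$, and reducing them modulo $p^l$ is legitimate.

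Next, for $\Phi$, write $m=kr_l+n$. By (\ref{11}),
\[
\Psi_m^2\equiv a_l^{2kn}(a_l^{r_l})^{k^2}b_l^{2k}\Psi_n^2 \pmod{p^l}.
\]
For the cross term, apply (\ref{11}) with $n\pm1$. The exponents of $a_l$ add up to $k(n+1)+k(n-1)=2kn$, and the $k^2$ and $b_l$ parts double as well. So
\[
\Psi_{m+1}\Psi_{m-1}\equiv a_l^{2kn}(a_l^{r_l})^{k^2}b_l^{2k}\Psi_{n+1}\Psi_{n-1} \pmod{p^l},
\]
and (\ref{12}) follows by factoring out this common multiplier. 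For $\overline{\Omega}$, the same bookkeeping applies to the cubic monomials. In $\Psi_{m-1}^2\Psi_{m+2}$ the $a_l$-exponents sum to $k(2(n-1)+(n+2))=3kn$. In $\Psi_{m-2}\Psi_{m+1}^2$ they sum to $k((n-2)+2(n+1))=3kn$. In $\Psi_m\Phi_m$ and $\Psi_m^3$ they also give $3kn$, using the $\Phi$ result just proved. The $k^2$ parts give $(a_l^{r_l})^{3k^2/2}$ and the $b_l$ parts give $b_l^{3k}$. Multiplying by the unit $(2\beta_1)^{-1}$ then yields (\ref{13}).

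The main obstacle is that (\ref{11}) is stated only for nonnegative indices, while the shifts $n-1$ and $n-2$ become negative when $n=0,1$. Three options are available. We may extend (\ref{11}) to all $n\in\mathbb{Z}$ with $k\ge1$ (the proof by induction on $k$ via the recurrence (\ref{s}) works equally well for negative $n$). Alternatively, we may note that $(a_l^{r_l})^{1/2}$ is defined as in Theorem 3.1, so the half-integer exponents cause no trouble. Otherwise we treat $n=0,1$ separately using $\Psi_{-1}=x_1-x_2$. I would take the first option, since it keeps the argument uniform.
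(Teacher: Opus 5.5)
Your proposal is the paper's argument. The paper proves this theorem in one line by substituting $kr_l+n$ into (\ref{3.1}) and (\ref{3.2}) and applying (\ref{11}) to every $\Psi$-factor, exactly as Theorem 4.1 is obtained from (\ref{01}), and your exponent bookkeeping ($2kn$, $3kn$, $(a_l^{r_l})^{k^2}$, $(a_l^{r_l})^{3k^2/2}$, $b_l^{2k}$, $b_l^{3k}$) is precisely what that substitution gives.

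The paper also uses (\ref{11}) at the indices $-1,-2$ without comment, so your caveat is fair. However, induction on $k$ cannot supply the missing base case $k=1$, $n\in\{-1,-2\}$, so your first option is not yet justified. A cleaner repair is to handle $n=0,1$ directly. Use $\Phi_n=X_n\Psi_n^2$, $\overline{\Omega}_n=Y_n\Psi_n^3$ and $[kr_l+n]P_1+P_2\equiv[n]P_1+P_2 \pmod{p^l}$; this needs $P_2,\,P_1+P_2\not\equiv\mathcal{O}\pmod{p}$, as in Theorem 5.1. Then use (\ref{3.1}) and (\ref{3.2}) only for $n\ge 2$.
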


\begin{proof}
Theorem can be proved by using equations (\ref{3.1}), (\ref{3.2}), and (\ref%
{11}) similar to the proof of Theorem 4.1.
\end{proof}

As an immediate consequence of this result, we deduce the periodicity of the
sequences $(\Phi _{\mathbf{n}}(\mathbf{P})\text{ mod }p^{l})_{n\geq 0}$ and $%
(\Omega \,_{\mathbf{n}}(\mathbf{P})\text{ mod }p^{l})_{n\geq 0}$.

\begin{proof}[Proof of Theorem 1.2]
We can show that the sequences $(\Phi _{\mathbf{n}}(\mathbf{P})\text{ mod }%
p^{l})_{n\geq 0}$, and $(\overline{\Omega }\,_{\mathbf{n}}(\mathbf{P})\text{
mod }p^{l})_{n\geq 0}$ are purely periodic with periods $r_{l}t_{l}$ and $%
rt_{l}^{\prime }$ respectively, for some integers $t_{l}$ and $t_{l}^{\prime
}$ dividing $p^{l-1}(p-1)$ for all positive integers $l$, by using the
congruences (\ref{12}), (\ref{13}) similar to the proof of Corollary 4.2.
\end{proof}

The following theorem can be proved in the same way as Theorem 4.3.

\begin{theorem}
With notation and assumptions as in Theorem 5.2. \newline
(i) Let $\varepsilon _{l}$, $\kappa _{l}$, and $\lambda _{l}$ be the orders
of $a_{l}^{2}$, $b_{l}^{2}$, and $a_{l}^{r_{l}}$ in $%
\mathbb{Z}
_{p^{l}}^{\ast }$, respectively. Then the sequence $(\Phi _{n}(\mathbf{P})%
\text{ mod }p^{l})_{n\geq 0}$ is purely periodic with period $r_{l}t_{l}$
where $t_{l}=2^{\mu _{l}}\text{lcm}[\varepsilon _{l}$,$~\kappa _{l}$,$%
~\lambda _{l}]$ and
\begin{equation*}
\mu _{l}=\left\{
\begin{array}{cc}
-1\text{,} &
\begin{array}{c}
\begin{array}{l}
\text{if }\varepsilon _{l}\text{ and }\lambda _{l}\text{ are both even, and }%
\lambda _{l}\text{ is divisible } \\
\text{by\ a higher power of }2\text{ than }\varepsilon _{l}\text{ and }%
\kappa _{l}\text{,}%
\end{array}
\\
\begin{array}{l}
\text{ \ or if }\varepsilon _{l}\text{ is odd, }\kappa _{l}\text{ and }%
\lambda _{l}\text{ are both even and \ \ \ \ \ \ } \\
\text{ \ ord}_{2}(\kappa _{l})=\text{ord}_{2}(\lambda _{l})=1\text{,}%
\end{array}%
\end{array}
\\
\text{ \ }0\text{,} & \text{otherwise. \ \ \ \ \ \ \ \ \ \ \ \ \ \ \ \ \ \ \
\ \ \ \ \ \ \ \ \ \ \ \ \ \ \ \ \ \ \ \ \ \ \ }%
\end{array}%
\text{ }\right.
\end{equation*}%
\newline
(ii)~Let $\varepsilon _{l}^{\prime }$ , $\kappa _{l}^{\prime }$, and $%
\lambda _{l}^{\prime }$ be the orders of $a_{l}^{3}$, $b_{l}^{3}$, and $%
a_{l}^{r}{}^{3/2}$ in $%
\mathbb{Z}
_{p^{l}}^{\ast }$, respectively. Then the sequence $(\overline{\Omega }%
\,{}_{n}(\mathbf{P})\text{ mod }p^{l})_{n\geq 0}$ is purely periodic with
period $r_{l}t_{l}^{\prime }$ where $t_{l}^{\prime }=2^{\mu _{l}^{\prime }}%
\text{lcm}[\varepsilon _{l}^{\prime }$,$~\kappa _{l}^{\prime }$,$~\lambda
_{l}^{\prime }]$ and
\begin{equation*}
\mu _{l}^{\prime }=\left\{
\begin{array}{cc}
-1\text{,} &
\begin{array}{c}
\begin{array}{l}
\text{if }\varepsilon _{l}^{\prime }\text{ and }\lambda _{l}^{\prime }\text{
are both even, and }\lambda _{l}^{\prime }\text{ is divisible } \\
\text{by\ a higher power of }2\text{ than }\varepsilon _{l}^{\prime }\text{
and }\kappa _{l}^{\prime }\text{,}%
\end{array}
\\
\begin{array}{l}
\text{ \ or if }\varepsilon _{l}^{\prime }\text{ is odd, }\kappa
_{l}^{\prime }\text{ and }\lambda _{l}^{\prime }\text{ are both even and\ \
\ \ \ \ \ } \\
\text{ \ ord}_{2}(\kappa _{l}^{\prime })=\text{ord}_{2}(\lambda _{l}^{\prime
})=1\text{,}%
\end{array}%
\end{array}
\\
\text{ \ }0\text{,} & \text{otherwise.\ \ \ \ \ \ \ \ \ \ \ \ \ \ \ \ \ \ \
\ \ \ \ \ \ \ \ \ \ \ \ \ \ \ \ \ \ \ \ \ \ \ \ }%
\end{array}%
\text{ }\right.
\end{equation*}
\end{theorem}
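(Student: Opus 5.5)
The plan is to run the argument of Theorem 4.3 again, working in the finite group $\mathbb{Z}_{p^{l}}^{\ast }$ instead of $\mathbb{F}_{p}^{\ast }$. The input is the congruence (\ref{12}) of Theorem 5.2 for part (i), and (\ref{13}) for part (ii).

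\textbf{Step 1: reduce the period to a condition on $t$.} From (\ref{12}), a shift by $r_{l}t$ fixes the sequence $(\Phi _{n}(\mathbf{P})\text{ mod }p^{l})$ exactly when
\begin{equation*}
a_{l}^{2t}\equiv 1\quad \text{and}\quad (a_{l}^{r_{l}})^{t^{2}}b_{l}^{2t}\equiv 1~(\text{mod }p^{l}).
\end{equation*}
To see that this condition is also necessary, plug $n=1$ and $n=2$ into (\ref{12}) and divide the two congruences, as in the proof of Corollary 4.2. This needs $\Phi _{1}(\mathbf{P})$ and $\Phi _{2}(\mathbf{P})$ to be $p$-adic units. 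That holds because $r>3$ gives $[1]P_{1}+P_{2},[2]P_{1}+P_{2}\not\equiv \mathcal{O}$, so by (\ref{y1}) the $\Psi $-values are units; the hypotheses then make the $\Phi $-values units too.

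Still following Corollary 4.2, the least period is a multiple of $r_{l}$. It therefore equals $r_{l}t_{l}$, where $t_{l}$ is the least positive $t$ satisfying the two congruences above.

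\textbf{Step 2: case analysis for $t_{l}$.} This is word for word the proof of Theorem 4.3, with $p$ replaced by $p^{l}$ and $a,b,r$ replaced by $a_{l},b_{l},r_{l}$. The congruences give $b_{l}^{4t}\equiv 1$, so $b_{l}^{2t}\equiv \pm 1$. In the case $b_{l}^{2t}\equiv 1$ we also get $a_{l}^{r_{l}t}\equiv \pm 1$.

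In the subcase $a_{l}^{r_{l}t}\equiv 1$ we obtain $t_{l}=\text{lcm}[\varepsilon _{l},\kappa _{l},\lambda _{l}]$. The other two sign patterns yield exactly the two $\mu _{l}=-1$ conditions, with $t_{l}=\tfrac{1}{2}\text{lcm}[\varepsilon _{l},\kappa _{l},\lambda _{l}]$. At each stage $2$-adic valuations are compared through Lemma 4.1.

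\textbf{Main obstacle.} Lemma 4.1 is stated modulo an odd prime, and here we need it modulo $p^{l}$. The extension is straightforward because $\mathbb{Z}_{p^{l}}^{\ast }$ is cyclic for odd $p$, so $-1$ is its unique element of order $2$. Hence for $d$ of order $\delta $, $d^{x}\equiv -1$ has a solution iff $\delta $ is even, and then the solutions are exactly the odd multiples of $\delta /2$. The proof of that lemma carries over verbatim, and I will note this explicitly.

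I will also check that the half-integer exponent $k^{2}/2$ in (\ref{11}) causes no trouble. It does not, because in (\ref{12}) the exponent is already squared away.

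\textbf{Part (ii).} The same argument applied to (\ref{13}) gives part (ii), with $a_{l}^{3}$, $b_{l}^{3}$ and $(a_{l}^{r_{l}})^{3/2}$ in place of $a_{l}^{2}$, $b_{l}^{2}$ and $a_{l}^{r_{l}}$. The square root of $a_{l}^{r_{l}}$ is taken as in Theorem 3.1. The unit check on the first two terms is now needed for $\overline{\Omega }_{1}(\mathbf{P})$ and $\overline{\Omega }_{2}(\mathbf{P})$.
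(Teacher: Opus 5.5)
Your route is the paper's. The paper proves this theorem only by saying it goes ``in the same way as Theorem 4.3'', and the core of your argument is sound. Your Step 2 is correct, including the observation that $\mathbb{Z}_{p^{l}}^{\ast}$ is cyclic for odd $p$, so $\pm 1$ are the only square roots of $1$ and Lemma 4.1 holds modulo $p^{l}$. Your part (ii) is also correct: $a_l^{3t}\equiv 1$ gives $((a_l^{r_l})^{3/2})^{2t}\equiv 1$, so the same case analysis applies.

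The gap is in Step 1, in the necessity direction. Your justification that $\Phi_1(\mathbf P)$ and $\Phi_2(\mathbf P)$ are $p$-adic units is false.
\begin{itemize}
\item The hypothesis $r>3$ only says $P_1,[2]P_1,[3]P_1\not\equiv\mathcal O$. It gives no information about $[n]P_1+P_2$, and neither Theorem 5.2 nor the statement assumes $P_1+P_2,\ [2]P_1+P_2\not\equiv\mathcal O \pmod p$.
\item Even when $\Psi_n(\mathbf P)$ is a unit, $\Phi_n(\mathbf P)=\Psi_n(\mathbf P)^2\,x([n]P_1+P_2)$. Since $\Psi_1=1$, this gives $\Phi_1(\mathbf P)=x(P_1+P_2)$ and $\overline{\Omega}_1(\mathbf P)=y(P_1+P_2)$. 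Nothing in the hypotheses prevents these from being divisible by $p$.
\item By (12), the zeros of $\Phi_n \bmod p$ fill whole residue classes mod $r$, namely those $n$ with $x([n]P_1+P_2)\equiv 0$. If, say, $r=4$ and these classes are $n\equiv 0,2$, then comparing indices where $\Phi_n(\mathbf P)$ is a unit only yields $a_l^{4s}\equiv 1$ and $a_l^{2s}(a_l^{r_l})^{s^2}b_l^{2s}\equiv 1$. That is not the pair of congruences you need.
\end{itemize}
Likewise, your claim that ``the least period is a multiple of $r_l$, following Corollary 4.2'' has no valid source. In the paper, the deduction of $r\mid\pi$ from $\Phi_{\pi+r}(\mathbf P)=\Phi_r(\mathbf P)$ is a non sequitur, since that equality holds for every period $\pi$. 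The zero-set argument that forces $r\mid\pi$ for elliptic divisibility sequences is not available for $\Phi_n$.

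As written, your argument (like the paper's) proves only the following. First, $r_lt_l$ is a period, where $t_l$ is the least $t$ with $a_l^{2t}\equiv 1$ and $(a_l^{r_l})^{t^2}b_l^{2t}\equiv 1 \pmod{p^l}$; this uses only the sufficiency half of Step 1. Second, $t_l$ is given by the explicit formula, which is pure group theory in the cyclic group $\mathbb{Z}_{p^{l}}^{\ast}$. That suffices if ``period'' in the statement means ``a period''. If you mean the least period, you need two further things. You must add a hypothesis guaranteeing unit values $\Phi_n(\mathbf P),\Phi_{n+1}(\mathbf P)$ (respectively $\overline{\Omega}_n(\mathbf P),\overline{\Omega}_{n+1}(\mathbf P)$) for some $n$. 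You must also supply a genuine proof that $r_l$ divides the least period.
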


\section{$p$-adic convergence of $(\Psi _{n}(\mathbf{P}))$, $(\Phi _{n}(%
\mathbf{P}))$ and $(\overline{\Omega }_{n}(\mathbf{P}))$ Sequences}

In \cite{JS3}, Silverman used the Mazur-Tate $p$-adic $\sigma $-function to
prove the existence and algebraicity of the $p$-adic limit of certain
subsequences of the sequence of values of division polynomials $(\Psi
_{n}(P))_{n\geq 1}$. More precisely, in \cite[Theorem 2]{JS3}, he showed
that if $E$ is an elliptic curve defined over $%
\mathbb{Q}
_{p}$ with good ordinary reduction and $P$ $\in E(%
\mathbb{Q}
_{p})$, then there is a power $q=p^{N}$ such that the sequence $(\Psi
_{mq^{i}}(P))_{_{i\geq 1}}$ converges in $\mathbb{Z}_{p}$ as $i\rightarrow
\infty $ for all $m\geq 1$. Furthermore, in an addendum \cite{JS3},
Silverman used the results in \cite[Th\'{e}or\`{e}mes B and C]{MA} to prove
that the sequence $(\Psi _{mq^{i}}(P))_{_{i\geq 1}}~$is $\mathbb{Z}_{p}$%
-Cauchy without the ordinary reduction hypothesis. In \cite{GB}, the authors
considered similar problems for the sequences $(\Phi _{n}(P))_{n\geq 1}$ and
$(\overline{\Omega }_{n}(P))_{n\geq 1}$ of values of the division
polynomials evaluated at a point $P$ $\in $ $E(K)$ and used the periodicity
properties of these sequences to show that the sequences $(\Phi
_{mq^{i}}(P))_{i\geq 1}$, and $(\overline{\Omega }_{mq^{i}}(P))_{i\geq 1}$
are $\mathbb{Z}_{p}$-Cauchy without the ordinary reduction hypothesis.

In this section, we prove that certain subsequences of the sequences $(\Psi
_{n}(\mathbf{P}))_{_{n\geq 1}}$, $(\Phi _{n}(\mathbf{P}))_{_{n\geq 1}}$, and
$(\overline{\Omega }{}_{n}(\mathbf{P}))_{_{n\geq 1}}$ of values of the
translated division polynomials evaluated at a point $\mathbf{P}$ $\in $ $%
E(K)^{2}$ are $\mathbb{Z}_{p}$-Cauchy. Although providing a proof using the
Mazur-Tate $\sigma $-function as Silverman did, would be highly compelling,
it remains unknown whether there exists a generalized version of the
Mazur-Tate $\sigma $-function that could be applied to these sequences; see
\cite{BM} for more details on Mazur-Tate $\sigma $-function. However, we can
use the symmetry properties of these sequences to prove that they are $%
\mathbb{Z}_{p}$-Cauchy without the ordinary reduction hypothesis.
Specifically, we employ techniques similar to those in \cite{JS3} to prove
Theorem 1.3.

\begin{proof}[Proof of Theorem 1.3.]
Let $E/%
\mathbb{Q}
$ be an elliptic curve and let $\mathbf{P}=(P_{1},P_{2})$ $\in $ $E(%
\mathbb{Q}
)^{2}$ be a pair consisting of points of infinite order. Let $p$ be an odd
prime such that $P_{1}~$and $P_{2}$ modulo $p$ are nonsingular. Let $r\geq 3$
be the order of $P_{1}$ modulo $p$ and let $p^{v}$ be the highest power of $%
p $ such that $[r]P_{1}\equiv \mathcal{O}$ $(\text{mod }$ $p^{v})$. Then by (%
\ref{11}), there are integers $a_{l}$, $b_{l}$, relatively prime to $p$,
such that for all nonnegative integers $k$, $n$, and all $l\geq v$%
\begin{equation*}
\Psi _{kp^{l-v}r+n}(\mathbf{P})\equiv
a_{l}^{kn}(a_{l}^{r_{l}})^{k^{2}/2}b_{l}^{k}{}\Psi _{n}(\mathbf{P})~(\text{%
mod }p^{l})\text{.}
\end{equation*}%
Now let $q=p^{N}$ be a power such that $q\equiv 1~(\text{mod }(p-1)r)$. Then
we have
\begin{equation*}
\Psi _{mq^{i}}(\mathbf{P})=\Psi _{mq^{j}(1+(p-1)rt)}(\mathbf{P})
\end{equation*}%
for all $i>j\geq 2l/N$, all $l>v$, and some $t\in
\mathbb{Z}
$, since $q^{i-j}\equiv 1~(\text{mod }(p-1)r)$. Therefore we obtain%
\begin{eqnarray*}
\Psi _{mq^{i}}(\mathbf{P}) &=&\Psi _{mt(p-1)p^{Nj-l+v}p^{l-v}r+mq^{j}}(%
\mathbf{P}) \\
&=&\Psi _{sp^{l-v}r+mq^{j}}(\mathbf{P})
\end{eqnarray*}%
where $s=mt(p-1)p^{Nj-l+v}$. Then by (\ref{11}), we have%
\begin{equation*}
\Psi _{mq^{i}}(\mathbf{P})\equiv
a_{l}^{smq^{j}}(a_{l}^{r_{l}})^{s^{2}/2}b_{l}^{s}\Psi _{mq^{j}}(\mathbf{P})~(%
\text{mod }~p^{l})
\end{equation*}%
and hence%
\begin{equation*}
\Psi _{mq^{i}}(\mathbf{P})\equiv \Psi _{mq^{j}}(\mathbf{P})~(\text{mod }%
~p^{l})
\end{equation*}%
since $s\in
\mathbb{Z}
_{p^{l}}^{\ast }$ and $p$ is an odd prime. Therefore%
\begin{equation*}
\Vert \Psi _{mq^{i}}(\mathbf{P})-\Psi _{mq^{j}}(\mathbf{P})\Vert _{p}\leq
q^{-j/2+1}\text{ \ \ for all }i>j>2(v/N+1)
\end{equation*}%
where $j=\lceil 2l/N\rceil $ and so the sequence $(\Psi _{mq^{i}}(\mathbf{P}%
))$ is $\mathbb{Z}_{p}$-Cauchy. Thus the sequence converges in $\mathbb{Z}%
_{p}$. Similarly we can use the formulas (\ref{12}) and (\ref{13}) to show
that the sequences $(\Phi _{mq^{i}}(\mathbf{P}))$ and $(\overline{\Omega }%
_{mq^{i}}(\mathbf{P}))$ converge in $\mathbb{Z}_{p}$.
\end{proof}

\section{$p$-adic properties of Somos Sequences}

For $k\geq 4$, a\textit{\ Somos }$k$\textit{\ sequence} is a sequence $%
W(n)=(W(n))_{n\geq 0}~$which satisfies the recurrence relation%
\begin{equation}
W(n)W(n-k)=\sum\limits_{i=1}^{\left\lfloor k/2\right\rfloor }\tau
_{i}W(n-i)W(n-k+i)  \label{1.1}
\end{equation}%
where the $\tau _{i}$ are constant parameters. These sequences are
generalizations of elliptic divisibility sequences and there has been a
great interest in the study of these sequences, see \cite{FomZ, Gale1,
Gale2, AH1, AH2, AH3, Mal, GB1}. See also \cite{Robinson}, \cite{CS} for the
periodicity properties of Somos $4$ sequences.

Hone, in a series of papers \cite{AH1, AH3}, presented a simple algorithmic
approach to determining elliptic curves associated to Somos $4$ and Somos $5$
sequences. He proved that the terms of a Somos $4$ sequence correspond to a
sequence of points $[n]P_{1}+P_{2}$ on an associated elliptic curve $E$,
i.e., the sequence of translated division polynomials $(\Psi _{n}(\mathbf{P}%
))_{n\geq 0}$ where $\mathbf{P}=(P_{1},P_{2})$, (see also \cite{CS} for
another approach). He also noted that it would be interesting to see how
some of his results on Somos $4$ and $5$ sequences could be applied to the $%
p $-adic setting. Indeed, studies on the sigma function solution of the
initial value problems for Somos sequences play a key role in the $p$-adic
convergence of Somos sequences. In this section, we use the Teichm\"{u}ller
character, Theorem 1.3, and some results of Hone to prove the $p$-adic
convergence of Somos $4$ and Somos $5$ sequences.

Hone \cite[Theorem 1.1]{AH1}, proved that the terms of a Somos $4$ sequence
can be given in terms of the Weierstrass sigma function for an associated
elliptic curve: let $W(n)$ be a Somos $4$ sequence with $\tau _{1}$, $%
W(0)\neq 0$. Then the sequence $W(n)$ corresponds to a sequence of points $%
[n]P_{1}+P_{2}$ on an associated elliptic curve $E$. Moreover, there exist a
lattice $L\subset
\mathbb{C}
$ and non-zero complex numbers $z_{1}$ and $z_{2}$ such that
\begin{eqnarray}
W(n) &=&\frac{W(0)}{\sigma (z_{2})}\left( \frac{\sigma (z_{1})\sigma
(z_{2})W(1)}{\sigma (z_{1}+z_{2})W(0)}\right) ^{n}\frac{\sigma (nz_{1}+z_{2})%
}{\sigma (z_{1})^{n^{2}}}  \notag \\
&=&\frac{W(1)^{n}}{W(0)^{n-1}}\Omega _{n}(z_{1},z_{2}\mathbf{;~}L)  \notag \\
&=&\alpha ^{n}\beta \Omega _{n}(z_{1},z_{2}\mathbf{;~}L)  \label{z2}
\end{eqnarray}%
for all $n\geq 1$, where $\alpha =\frac{W(1)}{W(0)}$, $\beta =W(0)$ and $%
\Omega _{n}(\mathbf{z;~}L)$ is the analytic $n$-division function defined as
in (\ref{7}).

We can reformulate this result in terms of translated division polynomials
as follows:

\begin{theorem}
Let $W(n)$ be a Somos $4$ sequence with $\tau _{1}$, $W(0)\neq 0$ and$~$let $%
E/%
\mathbb{Q}
$ and $\mathbf{P}=(P_{1}$, $P_{2})$ $\in $ $E(%
\mathbb{Q}
)^{2}$ be the associated elliptic curve and rational point. Let $(\Psi _{n}(%
\mathbf{P}))_{n\geq 0}$ be the sequence of values of translated division
\textit{polynomials} associated to $E$. Then%
\begin{equation}
W(n)=\alpha ^{n}\beta \Psi _{n}(\mathbf{P})  \label{14}
\end{equation}%
for all $n\geq 1$, where $\alpha =\frac{W(1)}{W(0)}$, $\beta =W(0)$.
\end{theorem}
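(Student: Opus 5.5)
The plan is to combine Hone's sigma-function formula (\ref{z2}) with Stange's identification (\ref{10}) of the analytic function $\Omega_n$ with the translated division polynomial. The point $\mathbf{P}=(P_1,P_2)$ attached to the Somos $4$ sequence is defined so that, under the uniformization $\Phi:\mathbb{C}/L\to E(\mathbb{C})$ for Hone's lattice $L$, we have $P_1=\Phi(z_1)$ and $P_2=\Phi(z_2)$. Here $z_1,z_2$ are the complex numbers appearing in (\ref{z2}), so that $\mathbf{P}=\Phi^{\ast}(\mathbf{z})$. Hone's theorem gives
\begin{equation*}
W(n)=\frac{W(1)^{n}}{W(0)^{n-1}}\,\Omega_n(z_1,z_2;L)\quad (n\geq 1).
\end{equation*}
Applying (\ref{10}) with $\mathbf{z}=(z_1,z_2)$ replaces $\Omega_n(\mathbf{z};L)$ by $\Psi_n(\mathbf{P})$. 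Rewriting $W(1)^nW(0)^{1-n}=(W(1)/W(0))^n\,W(0)=\alpha^n\beta$ then gives (\ref{14}).

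The steps, in order, are as follows.
\begin{enumerate}
\item Check that the hypotheses needed for (\ref{10}) hold. These are that $z_1$, $z_2$ and $z_1+z_2$ do not lie in $L$, equivalently that $P_1$, $P_2$ and $P_1+P_2$ are not $\mathcal{O}$. This is where $\tau_1\neq 0$ and $W(0)\neq 0$ enter: they make Hone's construction nondegenerate, giving nonzero $z_1,z_2$ with $\sigma(z_1)$, $\sigma(z_2)$ and $\sigma(z_1+z_2)$ finite and nonzero, so the prefactor in (\ref{z2}) is defined.
\item Verify the algebraic identity between the first and second lines of (\ref{z2}). Expand (\ref{7}): the powers of $\sigma(z_1)$, $\sigma(z_2)$ and $\sigma(z_1+z_2)$ are $-(n^2-n)$, $n-1$ and $-n$. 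Compare these with the factor $\sigma(z_1)^n\sigma(z_2)^{n-1}\sigma(z_1+z_2)^{-n}\sigma(z_1)^{-n^2}$ coming from the first line. This is a routine exponent check.
\item Invoke (\ref{10}) and regroup the constants as above.
\end{enumerate}

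A consistency check is to compare the cases $n=0,1$. The formula gives $\alpha^0\beta\Psi_0=W(0)$ and $\alpha\beta\Psi_1=W(1)$, which agree with $\Psi_0=\Psi_1=1$. An alternative, purely algebraic route avoids analysis: show that $n\mapsto\alpha^n\beta\Psi_n(\mathbf{P})$ satisfies the Somos $4$ recurrence. This uses (\ref{s}), the fact that a gauge factor $\alpha^n\beta$ preserves a Somos $4$ relation with rescaled coefficients, and the matching of $\tau_1,\tau_2$ with $\beta_1^2,-\beta_2$. One then matches enough initial terms, and uniqueness of solutions of the recurrence gives equality.

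The main obstacle is the identification step, namely making precise that Hone's $(E,z_1,z_2)$ is exactly Stange's setup with the same Weierstrass model. Stange's normalization of $\Psi_n$ (for instance $\Psi_{-1}=x_1-x_2$) depends on the chosen model. If Hone's curve is given in a model differing by a scaling $u$, then $\Omega_n$ changes by a factor of the form $u^{(\cdot)}$ depending on $n$, which can be absorbed into $\alpha$ and $\beta$. I would first verify that Hone's lattice corresponds to the model in which $\mathbf{P}$ is defined, so that no rescaling is needed.
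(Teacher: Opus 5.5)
Your proposal is correct and follows the paper's own proof. The paper proves this in one line by combining Hone's sigma-function formula (\ref{z2}) with Stange's identification (\ref{10}); your exponent check and the regrouping $W(1)^{n}W(0)^{1-n}=\alpha^{n}\beta$ simply spell that out.

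One caution concerns your fallback remark. Since $\sigma(uz;uL)=u\,\sigma(z;L)$, one gets $\Omega_n(u\mathbf{z};uL)=u^{\,n-n^{2}}\Omega_n(\mathbf{z};L)$. This factor is quadratic in $n$: it changes $\tau_1,\tau_2$ and cannot be absorbed into $\alpha^{n}\beta$. So the identity genuinely requires that the associated curve be the model whose lattice is Hone's $L$, as you planned to check. That does hold: writing Hone's curve $y^{2}=4x^{3}-g_2x-g_3$ as $Y^{2}=x^{3}-(g_2/4)x-g_3/4$ with $y=2Y$ gives exactly such a model.
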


\begin{proof}
The proof follows from \cite[Theorem 1.1]{AH1}, and (\ref{10}).
\end{proof}

Now we can give the proof of Theorem 1.4. The proof uses the Teichm\"{u}ller
character for computing the limit $\lim\limits_{i\rightarrow \infty
}W(mq^{i})$. The classical\textit{\ Teichm\"{u}ller character} is the unique
homomorphism%
\begin{equation*}
\chi :\mathbb{Z}_{p}^{\ast }\rightarrow \mathbf{\mu }_{p-1}\text{, \ \
satisfying \ }\chi (a)\equiv a(\text{mod }p)\text{,}
\end{equation*}%
and can be computed as the limit $\chi (a)=\lim $ $a^{p^{i}}$. This
construction can be generalized to group schemes $G$ over $\mathbb{Z}_{p}$,
or other complete local rings, see \cite[Proposition 10]{JS3} for more
details. See also \cite[Sections 3, 4, and 11]{HC} for background,
definitions, and all properties of the Teichm\"{u}ller character.

\begin{proof}[Proof of Theorem 1.4]
First observe that ord$_{p}(\Psi _{n}(\mathbf{P}))$ $\geq 0$, since $P_{1}$,
$P_{2}$, $P_{2}\pm P_{1}$ $\not\equiv $ $\mathcal{O}$ $(\text{mod }p)$, see
Remark 1.2. On the other hand, $P_{2}\not\equiv $ $\mathcal{O}$ $(\text{mod }%
p)$ implies that ord$_{p}(\beta )$ $\geq 0$ by the correspondence of $W(n)$
to the sequence of points $[n]P_{1}+P_{2}$. Thus if ord$_{p}(\alpha )$ $>0$
and ord$_{p}(\beta )$ $>0$, then by (\ref{14}) we obtain that%
\begin{equation*}
\text{ord}_{p}(W(n)\text{)}\geq n\text{~ord}_{p}(\alpha )+\text{ord}%
_{p}(\beta )\rightarrow \infty \text{ \ \ \ as }n\rightarrow \infty \text{.}
\end{equation*}%
It follows that if ord$_{p}(\alpha )$ $>0$ and ord$_{p}(\beta )$ $>0$, then $%
\lim\limits_{n\rightarrow \infty }W(n)=0$ in $\mathbb{Z}_{p}$.

Now consider the case $\alpha $, $\beta \in \mathbb{Z}_{p}^{\ast }$. Then
for all $m\geq 1$,
\begin{equation}
\lim\limits_{i\rightarrow \infty }\alpha ^{mp^{i}}\beta =\chi (\alpha
)^{m}\beta \text{ \ in }\mathbb{Z}_{p}  \label{15}
\end{equation}%
where $\chi (\alpha )\in \mathbf{\mu }_{p-1}$ is the value of the Teichm\"{u}%
ller character. Thus the existence of the limit $\lim\limits_{i\rightarrow
\infty }W(mq^{i})$ in $\mathbb{Z}_{p}$, follows from the equation (\ref{15})
and Theorem 1.3. Note that our assumptions in theorem were modeled to
provide that Theorem 1.3 is applicable and the limit $\lim\limits_{i%
\rightarrow \infty }\Psi _{mq^{i}}(\mathbf{P})$ exists in $\mathbb{Z}_{p}$
by Theorem 1.3.
\end{proof}

Hone \cite[Theorem 2.9]{AH3} also proved that the terms of a Somos $5$
sequence can be expressed in terms of the Weierstrass sigma function for an
associated elliptic curve: let $W(n)$ be a Somos $5$ sequence with $%
W(0)W(1)\neq 0$. Then there exist a lattice $L\subset
\mathbb{C}
$ and non-zero complex numbers $z_{1}$, $z_{2}$ such that
\begin{eqnarray*}
W(2n) &=&\frac{W(0)}{\sigma (z_{2})}\left( \frac{\sigma (2z_{1})\sigma
(z_{2})W(2)}{\sigma (2z_{1}+z_{2})W(0)}\right) ^{n}\frac{\sigma
(2nz_{1}+z_{2})}{\sigma (z_{1})^{n^{2}}} \\
&=&\frac{W(2)^{n}}{W(0)^{n-1}}\Omega _{n}(2z_{1},z_{2}\mathbf{;~}L) \\
&=&\alpha ^{n}\beta \Omega _{n}(2z_{1},z_{2}\mathbf{;~}L)\text{,}
\end{eqnarray*}%
and
\begin{eqnarray*}
W(2n+1) &=&\frac{W(1)}{\sigma (z_{1}+z_{2})}\left( \frac{\sigma
(2z_{1})\sigma (z_{1}+z_{2})W(3)}{\sigma (3z_{1}+z_{2})W(1)}\right) ^{n}%
\frac{\sigma ((2n+1)z_{1}+z_{2})}{\sigma (2z_{1})^{n^{2}}} \\
&=&\frac{W(3)^{n}}{W(1)^{n-1}}\Omega _{n}(2z_{1},z_{1}+z_{2}\mathbf{;~}L) \\
&=&\gamma ^{n}\delta \Omega _{n}(2z_{1},z_{1}+z_{2}\mathbf{;~}L)
\end{eqnarray*}%
for all $n\geq 1$, where $\alpha =\frac{W(2)}{W(0)}$, $\beta =W(0)$, $\gamma
=\frac{W(3)}{W(1)}$, $\delta =W(1)$, and $\Omega _{n}(\mathbf{z;~}L)$ is the
analytic $n$-division function defined as in (\ref{7}). Hence, by (\ref{z2}%
), a Somos $5$ sequence can be viewed as a Somos $4$ sequence whose even
(odd) index terms are multiplied by some geometric progression. As in the
Somos $4$ sequences, we can restate Hone's result in terms of translated
division polynomials as follows:

\begin{theorem}
Let $W(n)$ be a Somos $5$ sequence with $W(0)W(1)\neq 0$,$~$let $E/%
\mathbb{Q}
$ be the associated elliptic curve, and let $\mathbf{P}=(P_{1},P_{2})$ and $%
\mathbf{Q}=(Q_{1},Q_{2})\in $ $E(\mathbb{Q})^{2}$ be the associated rational
points for even and odd index terms of $(W(n))$, respectively. Let $(\Psi
_{n}(\mathbf{P}))_{n\geq 0}$ be the sequence of values of translated
division \textit{polynomials} associated to $E$. Then%
\begin{equation}
W(2n)=\alpha ^{n}\beta \Psi _{n}(\mathbf{P})\text{ \ \ and \ }W(2n+1)=\gamma
^{n}\delta \Psi _{n}(\mathbf{Q})  \label{z4}
\end{equation}%
for all $n\geq 1$, where $\alpha =\frac{W(2)}{W(0)}$, $\beta =W(0)$, $\gamma
=\frac{W(3)}{W(1)}$, $\delta =W(1)$.
\end{theorem}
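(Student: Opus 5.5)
The plan mirrors the proof of Theorem 6.1 (the Somos 4 reformulation). We start from Hone's sigma-function representation \cite[Theorem 2.9]{AH3} quoted above. There, for all $n\geq 1$,
\[
W(2n)=\alpha ^{n}\beta \,\Omega _{n}(2z_{1},z_{2};L),\qquad
W(2n+1)=\gamma ^{n}\delta \,\Omega _{n}(2z_{1},z_{1}+z_{2};L),
\]
with $\alpha =W(2)/W(0)$, $\beta =W(0)$, $\gamma =W(3)/W(1)$ and $\delta =W(1)$. The task is to identify each analytic factor $\Omega _{n}$ with a value of a translated division polynomial at a rational pair of points on the associated curve $E/\mathbb{Q}$.

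First, define the pairs through the complex uniformization $\Phi :\mathbb{C}/L\rightarrow E(\mathbb{C})$ of Section 2. Set
\[
\mathbf{P}=(P_{1},P_{2})=\Phi ^{\ast }(2z_{1},z_{2}),\qquad
\mathbf{Q}=(Q_{1},Q_{2})=\Phi ^{\ast }(2z_{1},z_{1}+z_{2}),
\]
so that $P_{1}=Q_{1}=\Phi (2z_{1})$, $P_{2}=\Phi (z_{2})$ and $Q_{2}=\Phi (z_{1}+z_{2})=P_{2}+\Phi (z_{1})$. These are precisely the ``associated rational points'' of Hone's construction, so they lie in $E(\mathbb{Q})^{2}$. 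We then check the nondegeneracy hypotheses needed for (\ref{10}), namely $P_{1},P_{2},P_{2}\pm P_{1}\neq \mathcal{O}$ and likewise for $\mathbf{Q}$. In lattice terms these read $2z_{1},z_{2},z_{2}\pm 2z_{1}\notin L$ and $2z_{1},z_{1}+z_{2},3z_{1}+z_{2},z_{2}-z_{1}\notin L$. They follow because the corresponding $\sigma$-values occur in denominators of Hone's formula, or are forced nonzero by $W(0)W(1)\neq 0$ together with the genericity built into the association.

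Second, we apply Stange's identity (\ref{10}), $\Psi _{n}(\Phi ^{\ast }(\mathbf{z}))=\Omega _{n}(\mathbf{z})$, with $\mathbf{z}=(2z_{1},z_{2})$ and with $\mathbf{z}=(2z_{1},z_{1}+z_{2})$. This gives $\Omega _{n}(2z_{1},z_{2};L)=\Psi _{n}(\mathbf{P})$ and $\Omega _{n}(2z_{1},z_{1}+z_{2};L)=\Psi _{n}(\mathbf{Q})$. Substituting these into Hone's formulas yields (\ref{z4}) for all $n\geq 1$.

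The main obstacle is confirming that Hone's normalization of the middle expressions matches Stange's $\Omega _{n}$ from (\ref{7}) exactly, and not merely up to a geometric factor. Concretely, the exponent of $\sigma (2z_{1})$ must be $n^{2}-n$ (the displayed formula writes $\sigma (z_{1})^{n^{2}}$), the factor $\sigma (\cdot +z_{2})^{n}$ must appear, and $\sigma (z_{2})^{1-n}$ must appear. The approach is to expand Hone's first line and collect powers. The factor $\bigl(\sigma (2z_{1})\sigma (z_{2})/\sigma (2z_{1}+z_{2})\bigr)^{n}$, combined with $\sigma (z_{2})^{-1}$ and $\sigma (2z_{1})^{-n^{2}}$, reproduces the denominator of (\ref{7}) for the pair $(2z_{1},z_{2})$. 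Any leftover factor of the form $c^{n}$ with $c$ independent of $n$ is absorbed into $\alpha ^{n}$ after evaluating at $n=0,1$, using $\Psi _{0}=\Psi _{1}=1$. The odd-index case is handled identically with $z_{2}$ replaced by $z_{1}+z_{2}$. Finally, rationality of $\Psi _{n}(\mathbf{P})$ and $\Psi _{n}(\mathbf{Q})$ follows from the algebraic construction of $\Psi _{n}$ in $R$ recalled in Section 2, so the identity holds in $\mathbb{Q}$ and not just in $\mathbb{C}$.
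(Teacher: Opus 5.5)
Your route is the paper's: its proof is literally ``this follows from Hone's Theorem 2.9 and (\ref{10})''. The trouble is the step you yourself identify as the crux, which your argument does not settle.

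When you combine the bracket with ``$\sigma(2z_1)^{-n^2}$'' you silently replace the displayed $\sigma(z_1)^{n^2}$ by $\sigma(2z_1)^{n^2}$. That replacement is exactly what needs justification. Your fallback (absorb a leftover $c^{n}$ into $\alpha^{n}$ by normalizing at $n=0,1$) cannot handle the real discrepancy, because the discrepancy is quadratic in $n$.

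Read literally, the display differs from $\alpha^n\beta\,\Omega_n(2z_1,z_2;L)$ by $(\sigma(2z_1)/\sigma(z_1))^{n^2}$; it does not even return $W(2)$ at $n=1$. In Hone's formula the denominator is $\sigma(z_1)^{N^2}$ in the full index $N$, so $W(2n)=AB^{2n}\sigma(2nz_1+z_2)/\sigma(z_1)^{4n^2}$. Collecting powers against (\ref{7}) and normalizing at $n=0,1$ gives
\[
W(2n)=\alpha^{n}\beta\,c^{\,n^{2}-n}\,\Omega_{n}(2z_{1},z_{2};L),\qquad c=\frac{\sigma(2z_{1})}{\sigma(z_{1})^{4}}.
\]
Likewise $W(2n+1)=\gamma^{n}\delta\,c^{\,n^{2}-n}\,\Omega_{n}(2z_{1},z_{1}+z_{2};L)$, with the same $c$.

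Since $c^{n^2-n}=1$ for $n=0,1$, the normalization $\Psi_0=\Psi_1=1$ can neither detect nor remove this factor. So with $\mathbf{P}=\Phi^{*}(2z_1,z_2)$ on the model uniformized by $L$, (\ref{z4}) fails in general (already at $n=2$) unless $c^{2}=1$.

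The missing idea is a rescaling of the lattice. Since $\sigma(\mu z;\mu L)=\mu\,\sigma(z;L)$, definition (\ref{7}) gives $\Omega_n(\mu\mathbf{z};\mu L)=\mu^{\,n-n^{2}}\,\Omega_n(\mathbf{z};L)$. Taking $\mu=c^{-1}$ then yields, simultaneously,
\[
W(2n)=\alpha^{n}\beta\,\Omega_n(2\mu z_1,\mu z_2;\mu L),\qquad W(2n+1)=\gamma^{n}\delta\,\Omega_n(2\mu z_1,\mu(z_1+z_2);\mu L).
\]
Thus $E$, $\mathbf{P}$, $\mathbf{Q}$ must be taken on the rescaled Weierstrass model uniformized by $\mu L$, with $\mathbf{P}=\Phi_{\mu L}^{*}(2\mu z_1,\mu z_2)$ and $\mathbf{Q}=\Phi_{\mu L}^{*}(2\mu z_1,\mu(z_1+z_2))$. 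Equivalently, this is the model on which the common Somos $4$ relation satisfied by both subsequences has exactly the coefficients $\beta_1^{2}$, $-\beta_2$ of (\ref{s}) at $P_1$. Only on that model does (\ref{10}) deliver (\ref{z4}).

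The paper's one-line proof glosses over the same normalization. Since you raised it, though, your write-up needs this rescaling step in place of the $c^{n}$ argument.
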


\begin{proof}
The proof follows from \cite[Theorem 2.9]{AH3}, and (\ref{10}).
\end{proof}

Finally, we can use Theorem 7.2 to prove the $p$-adic convergence of Somos $%
5 $ sequences.

\begin{proof}[Proof of Theorem 1.5]
The theorem can be proved similar to the proof of Theorem 1.4.
\end{proof}

We note that the symmetry formulas for Somos $5$ sequences associated to
elliptic curves can be obtained as follows.

\begin{theorem}
Let $W(n)$ be a Somos $5$ sequence with $W(0)W(1)\neq 0$ and let $E/%
\mathbb{Q}
$ be the associated elliptic curve. Let $\mathbf{P}=(P_{1},P_{2})$ and $%
\mathbf{Q}=(Q_{1},Q_{2})\in $ $E(\mathbb{Q})^{2}$ be the associated rational
points for even and odd index terms of $W(n)$, respectively. Let $p$ be an
odd prime such that $P_{1}$, $P_{2}$,$~Q_{1}$, and $Q_{2}$ modulo $p$ are
nonsingular, let the orders of $P_{1}$ and $Q_{1}$ modulo $p$ be $r_{1}$ and
$r_{1}^{\prime }>3$, respectively. Let $p^{v}$ and $p^{w}$ be the highest
power of $p$ such that $[r_{1}]P_{1}\equiv \mathcal{O}$ $(\text{mod }$ $%
p^{v})$ and $[r_{1}^{\prime }]Q_{1}\equiv \mathcal{O}$ $(\text{mod }$ $%
p^{w}) $ and write let $r_{l}=p^{l-v}r_{1}$ for $l\geq v$ and let $%
r_{l}^{\prime }=p^{l-v}r_{1}^{\prime }$ for $l\geq w$. Assume further that $%
P_{1}$, $P_{2}$, $P_{2}\pm P_{1}$, $Q_{1}$, $Q_{2}$, $Q_{2}\pm
Q_{1}\not\equiv $ $\mathcal{O}$ $(\text{mod }p)$ and that ord$_{p}\left(
\frac{W(2)}{W(0)}\right) $, ord$_{p}\left( \frac{W(3)}{W(1)}\right) $ $\geq
0 $. Then there exist $a_{l}$, $b_{l}$, $c_{l}$, $d_{l}$ relatively prime to
$p $, such that
\begin{eqnarray}
W(2kp^{l-v}r_{1}+2n) &=&a_{l}^{kn}(a_{l}^{r_{l}})^{k^{2}/2}{}b_{l}^{k}{}W(2n)%
\text{ }(\text{mod }p^{l})\text{,}  \label{u} \\
W(2kp^{l-w}r_{1}^{\prime }+2n+1) &=&c_{l}^{kn}(c_{l}^{r_{l}^{\prime
}})^{k^{2}/2}{}d_{l}^{k}{}{}W(2n+1)\text{ }(\text{mod }p^{l})  \label{v}
\end{eqnarray}%
for all nonnegative integers $k$, $n$ and all $l\geq v$, $w$.
\end{theorem}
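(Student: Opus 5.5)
The plan is to reduce Theorem 7.3 to Theorem 5.2, applied separately to the two translated sequences $(\Psi_n(\mathbf{P}))$ and $(\Psi_n(\mathbf{Q}))$, using Theorem 7.2 to transport the symmetry congruences to $W$. By (\ref{z4}) we have $W(2n)=\alpha^{n}\beta\Psi_n(\mathbf{P})$ and $W(2n+1)=\gamma^{n}\delta\Psi_n(\mathbf{Q})$. Here $\alpha=W(2)/W(0)$, $\beta=W(0)$, $\gamma=W(3)/W(1)$ and $\delta=W(1)$. As in Remark 1.2 and the proof of Theorem 1.4, the hypotheses give $p$-integrality of all quantities:
\begin{itemize}
\item $P_1,P_2,P_2\pm P_1\not\equiv\mathcal{O}$ and the order of $P_1$ being $r_1>3$ give $\mathrm{ord}_p\Psi_n(\mathbf{P})\ge 0$;
\item the same holds for $\mathbf{Q}$;
\item $\mathrm{ord}_p\alpha,\mathrm{ord}_p\gamma\ge0$ by assumption, and $\beta,\delta$ are $p$-integral since $P_2,Q_2\not\equiv\mathcal{O}$.
\end{itemize}
So all congruences below make sense in $\mathbb{Z}_{(p)}$.

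First, I apply Theorem 5.2 to $\mathbf{P}$, with order $r_1$ and exponent $v$. This gives integers $a_l',b_l'$ prime to $p$ such that
\[
\Psi_{kr_l+n}(\mathbf{P})\equiv a_l'^{kn}(a_l'^{r_l})^{k^2/2}b_l'^{k}\Psi_n(\mathbf{P}) \pmod{p^l},
\]
where $r_l=p^{l-v}r_1$. Next, I multiply by $\alpha^{kr_l+n}\beta$ and note that the left side equals $W(2(kr_l+n))=W(2kp^{l-v}r_1+2n)$, while $\alpha^{n}\beta\Psi_n(\mathbf{P})=W(2n)$. This yields
\[
W(2kp^{l-v}r_1+2n)\equiv \alpha^{kr_l}a_l'^{kn}(a_l'^{r_l})^{k^2/2}b_l'^{k}\,W(2n).
\]
Setting $a_l=a_l'$ and $b_l=\alpha^{r_l}b_l'$ absorbs the extra geometric factor, since $\alpha^{kr_l}=(\alpha^{r_l})^k$. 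This gives (\ref{u}). The identical argument with $\mathbf{Q}$, $r_1'$, $w$, $\gamma$, $\delta$ gives (\ref{v}) with $c_l=a_l''$ and $d_l=\gamma^{r_l'}b_l''$.

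The main obstacle is the requirement that $b_l$ and $d_l$ be prime to $p$. This holds only when $\alpha$ (respectively $\gamma$) is a $p$-unit. When $\mathrm{ord}_p\alpha>0$, I would instead argue as in the proof of Theorem 1.4: for $k\ge1$ the left side $W(2kr_l+2n)$ has valuation at least $(kr_l+n)\,\mathrm{ord}_p\alpha\ge r_l\ge l$. Indeed $r_l=p^{l-v}r_1\ge l$ for $l\ge v$, because $r_1>3$. Hence the left side is $\equiv0 \pmod{p^l}$. For $k\ge 1$ I then need the right side to vanish mod $p^l$ as well. The honest choice $b_l=\alpha^{r_l}b_l'$ does this, at the cost of $b_l$ not being a unit. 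So in that degenerate case I would either state the result with $b_l$ merely $p$-integral, or note that both sides vanish mod $p^l$, so that any unit $b_l$ works trivially. The case $k=0$ is an identity.

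A second point to check is that Theorem 5.2 is available for $\mathbf{Q}$ with the same prime. This needs $Q_1,Q_2$ nonsingular mod $p$ and $r_1'>3$, and both are assumed. It also needs $Q_1,Q_2$ of infinite order, which follows from the Somos $5$ sequence not being periodic, as in Hone's setup. The exponents $k^2/2$ are interpreted exactly as in Theorem 5.2.
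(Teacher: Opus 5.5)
Your main line is the paper's argument in different packaging. The paper combines Swart's Somos $4$ congruence (Theorem 5.1), in which the geometric factor is already absorbed into $v_l$, with the identities $W(2n)=\alpha^{n}\beta\Psi_n(\mathbf{P})$ and $W(2n+1)=\gamma^{n}\delta\Psi_n(\mathbf{Q})$. You instead apply Theorem 5.2 to $\mathbf{P}$ and $\mathbf{Q}$ and absorb $\alpha^{kr_l}=(\alpha^{r_l})^{k}$ into $b_l$ by hand. That computation is correct, and it proves the statement whenever $\alpha$ and $\gamma$ are $p$-units. For the odd part you should read $r_l'=p^{l-w}r_1'$, as the index $2kp^{l-w}r_1'$ requires; the $p^{l-v}$ in the statement is a typo.

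The degenerate case is where your proposal goes wrong. Your second fallback, that both sides vanish modulo $p^l$ so any unit $b_l$ works, is false; take $k=1$, $n=0$:
\begin{itemize}
\item The right side is a $p$-unit times $W(0)=\beta\neq 0$, so its valuation is exactly $\mathrm{ord}_p\beta$, which is $<l$ once $l>\mathrm{ord}_p\beta$.
\item The left side $W(2r_l)=\alpha^{r_l}\beta\Psi_{r_l}(\mathbf{P})$ has valuation at least $r_l\,\mathrm{ord}_p\alpha+\mathrm{ord}_p\beta>\mathrm{ord}_p\beta$.
\end{itemize}
So the congruence is impossible for such $l$. Your bound $r_l\ge l$ is also unjustified: nothing prevents $v>r_1$, and then $l=v$ gives $r_l=r_1<l$. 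The honest choice $b_l=\alpha^{r_l}b_l'$ does not need that bound, however.

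Hence if $\mathrm{ord}_p\alpha>0$ (or $\mathrm{ord}_p\gamma>0$), no $b_l$ (resp. $d_l$) prime to $p$ exists for large $l$. The statement as written is false there. The correct fix is your first option: assume $\alpha,\gamma\in\mathbb{Z}_p^{\ast}$, or allow $b_l,d_l$ to be merely $p$-integral.

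The paper's one-line proof does not escape this either. Theorem 5.1 asserts $v_l$ prime to $p$ under the same weak hypothesis $\mathrm{ord}_p(W(1)/W(0))\ge 0$, and it fails the same $k=1$, $n=0$ test when that valuation is positive. So the unit issue you raised is genuine. Record it as a needed hypothesis rather than covering it with the trivial-vanishing claim.
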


\begin{proof}
The proof uses Theorem 5.1 and equation (\ref{z4}).
\end{proof}

As an immediate consequence of this result, we deduce the periodicity of the
subsequences $(W(2n)\text{ mod }p^{l})$ and $(W(2n+1)\text{ mod }p^{l})$
which proves Theorem 1.6.

\begin{corollary}
With notation and assumptions as in Theorem 7.3, even and odd index
subsequences of a Somos $5$ sequence are purely periodic with periods $%
r_{l}t_{l}$ and $r_{l}^{\prime }t_{l}^{\prime }$ respectively, for some
integers $t_{l}$ and $t_{l}^{\prime }$ dividing $p^{l-1}(p-1)$ for all
positive integers $l$.
\end{corollary}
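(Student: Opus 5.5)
The plan is to derive the corollary from the congruences (\ref{u}) and (\ref{v}) of Theorem 7.3, copying the argument of Corollary 4.2 (and of \cite[Theorem 9.4.1]{CS}). I treat the even subsequence $V(n)=W(2n)$; the odd one $U(n)=W(2n+1)$ is handled identically with $c_l,d_l,r_l^{\prime}$ in place of $a_l,b_l,r_l$. Fix $l\geq v$. For $l<v$, use the congruence modulo $p^{v}$, which implies the one modulo $p^{l}$, so a period for level $v$ serves.

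\textbf{Step 1: a period of the required shape.} Let $t_l\geq 1$ be the smallest integer with
\begin{equation*}
a_l^{t_l}\equiv 1 \quad\text{and}\quad (a_l^{r_l})^{t_l^2/2}\,b_l^{t_l}\equiv 1 \pmod{p^l}.
\end{equation*}
The half-integer exponent is read as in Theorem 3.1 and Theorem 5.1, that is, via the fixed square root of $a_l^{r_l}$. Setting $k=t_l$ in (\ref{u}) gives $V(t_l r_l+n)\equiv V(n)\pmod{p^l}$ for every $n\geq 0$. Hence $r_l t_l$ is a period, and it is pure because the congruence holds starting from $n=0$.

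\textbf{Step 2: $t_l$ divides $p^{l-1}(p-1)$.} Since $a_l$, $b_l$ and the chosen square root $\sqrt{a_l^{r_l}}$ are units modulo $p^l$, each has order dividing $\#(\mathbb{Z}/p^l\mathbb{Z})^{\ast}=p^{l-1}(p-1)$. Let $T=p^{l-1}(p-1)$. Then $a_l^{T}\equiv 1$, and $(\sqrt{a_l^{r_l}})^{T^2}\equiv 1$ because $T$ divides $T^2$. Also $b_l^{T}\equiv 1$. So $T$ satisfies both defining conditions.

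To pass from this to $t_l\mid T$, I would argue as in Corollary 4.2. Let $\pi$ be the least period. First show that $r_l\mid\pi$, using that $V(kr_l)\equiv (a_l^{r_l})^{k^2/2}b_l^kV(0)$. Then write $\pi=r_ls$ with $s\mid t_l$. Comparing the cases $n$ and $n+1$ in (\ref{u}), with $V(n)$ a unit, forces $a_l^{s}\equiv 1$ and hence the second condition as well, so $s=t_l$. Finally, the set of $k$ satisfying both conditions is closed under subtraction in the relevant sense: the map $k\mapsto a_l^{kn}(a_l^{r_l})^{k^2/2}b_l^k$ is a homomorphism on the subgroup of $k$ with $a_l^k\equiv1$. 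This gives $t_l\mid T$.

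\textbf{Main obstacle.} The difficult step is the divisibility argument in Step 2. The quadratic term $k^2/2$ means the set of valid $k$ is not obviously a subgroup. I would handle this by restricting to $k$ with $a_l^k\equiv1$. On that set, $k\mapsto(\sqrt{a_l^{r_l}})^{k^2}b_l^k$ is a character: the cross term $(\sqrt{a_l^{r_l}})^{2kk^{\prime}}=(a_l^{k})^{r_lk^{\prime}}\equiv1$ vanishes. So its kernel is a subgroup $d\mathbb{Z}$ with $d=t_l$, and $T$ lies in it.

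A further point needs care. The terms $V(n)$ must be $p$-adic units, or at least the cancellation must be legitimate. By Remark 1.2 they are $p$-integral, and the units needed for the comparison ($n=0,1$) come from $P_2$, $P_1+P_2\not\equiv\mathcal{O}$ together with $\operatorname{ord}_p(W(2)/W(0))\geq0$. Only the upper bound $t_l\mid T$ is required anyway, and that already follows from the character argument without minimality of $\pi$.
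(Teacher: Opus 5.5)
Your proposal is correct and is essentially the paper's argument: the paper proves the corollary by rerunning Corollary 4.2 with (\ref{u}) and (\ref{v}), that is, by taking $t_l$ minimal with $a_l^{t_l}\equiv 1$ and $(a_l^{r_l})^{t_l^2/2}b_l^{t_l}\equiv 1 \pmod{p^l}$ and setting $k=t_l$. Your observation that $k\mapsto (a_l^{r_l})^{k^2/2}b_l^{k}$ is a homomorphism on $\{k : a_l^{k}\equiv 1\}$ supplies the justification of $t_l\mid p^{l-1}(p-1)$ that Corollary 4.2 only asserts, and you are right that the least-period part (with its unproved claim $r_l\mid\pi$) can simply be dropped; one small fix is that for $l<v$ you should reduce the level-$v$ congruence modulo $p^l$ and rerun the argument there, since reusing $t_v$ only gives $t_v\mid p^{v-1}(p-1)$.
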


\begin{proof}
This can be proved by using equations (\ref{u}) and (\ref{v}) similar to the
proof of Corollary 4.2.
\end{proof}

The following theorem gives explicit formulas for computing the periods of
the even and odd index subsequences of a Somos $5$ sequence. The proof is
similar to the proof of Theorem 4.3.

\begin{theorem}
With notation and assumptions as in Theorem 7.3. \newline
(i) Let $\varepsilon _{l}$, $\kappa _{l}$, and $\lambda _{l}$ denote the
orders of $a_{l}$, $b_{l}$, and $(a_{l}^{r_{l}})^{1/2}$ in $%
\mathbb{Z}
_{p^{l}}^{\ast }$, respectively. Then the sequence $(W(2n)\text{ mod }p^{l})$
is purely periodic with period $r_{l}t_{l}$ where $t_{l}=2^{\mu _{l}}\text{%
lcm}[\varepsilon _{l},~\kappa _{l},~\lambda _{l}]$ and
\begin{equation*}
\mu _{l}=\left\{
\begin{array}{cc}
-1\text{,} &
\begin{array}{c}
\begin{array}{l}
\text{if }\varepsilon _{l}\text{ and }\lambda _{l}\text{ are both even, and }%
\lambda _{l}\text{ is divisible } \\
\text{by\ a higher power of }2\text{ than }\varepsilon _{l}\text{ and }%
\kappa _{l}\text{,}%
\end{array}
\\
\begin{array}{l}
\text{ \ or if }\varepsilon _{l}\text{ is odd, }\kappa _{l}\text{ and }%
\lambda _{l}\text{ are both even and \ \ \ \ \ \ } \\
\text{ \ ord}_{2}(\kappa _{l})=\text{ord}_{2}(\lambda _{l})=1\text{,}%
\end{array}%
\end{array}
\\
\text{ \ }0\text{,} & \text{otherwise. \ \ \ \ \ \ \ \ \ \ \ \ \ \ \ \ \ \ \
\ \ \ \ \ \ \ \ \ \ \ \ \ \ \ \ \ \ \ \ \ \ \ }%
\end{array}%
\text{ }\right.
\end{equation*}%
(ii) Let $\varepsilon _{l}^{\prime }$, $\kappa _{l}^{\prime }$, and $\lambda
_{l}^{\prime }$ denote the orders of $c_{l}$, $d_{l}$, and $%
(c_{l}^{r_{l}^{\prime }})^{1/2}$ in $%
\mathbb{Z}
_{p^{l}}^{\ast }$, respectively. Then the sequence $(W(2n+1)\text{ mod }%
p^{l})$ is purely periodic with period $r_{l}^{\prime }t_{l}^{\prime }$
where $t_{l}^{\prime }=2^{\mu _{l}^{\prime }}\text{lcm}[\varepsilon
_{l}^{\prime },~\kappa _{l}^{\prime },~\lambda _{l}^{\prime }]$ and
\begin{equation*}
\mu _{l}=\left\{
\begin{array}{cc}
-1\text{,} &
\begin{array}{c}
\begin{array}{l}
\text{if }\varepsilon _{l}^{\prime }\text{ and }\lambda _{l}^{\prime }\text{
are both even, and }\lambda _{l}^{\prime }\text{ is divisible } \\
\text{by\ a higher power of }2\text{ than }\varepsilon _{l}^{\prime }\text{
and }\kappa _{l}^{\prime }\text{,}%
\end{array}
\\
\begin{array}{l}
\text{ \ or if }\varepsilon _{l}^{\prime }\text{ is odd, }\kappa
_{l}^{\prime }\text{ and }\lambda _{l}^{\prime }\text{ are both even and \ \
\ \ \ \ } \\
\text{ \ ord}_{2}(\kappa _{l}^{\prime })=\text{ord}_{2}(\lambda _{l}^{\prime
})=1\text{,}%
\end{array}%
\end{array}
\\
\text{ \ }0\text{,} & \text{otherwise. \ \ \ \ \ \ \ \ \ \ \ \ \ \ \ \ \ \ \
\ \ \ \ \ \ \ \ \ \ \ \ \ \ \ \ \ \ \ \ \ \ \ }%
\end{array}%
\text{ }\right.
\end{equation*}
\end{theorem}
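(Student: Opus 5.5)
The plan follows the proof of Theorem 4.3, now working in $\mathbb{Z}_{p^{l}}^{\ast }$ and starting from the symmetry congruence (\ref{u}) (respectively (\ref{v})) of Theorem 7.3. The main difference from Theorem 4.3 is the shape of the multiplier. Write $A=a_{l}$, $B=b_{l}$ and $C=(a_{l}^{r_{l}})^{1/2}$; this square root exists in $\mathbb{Z}_{p^{l}}^{\ast}$ because (\ref{u}) is well defined, exactly as in (\ref{3.13}). Then (\ref{u}) says
\begin{equation*}
W(2kr_{l}+2n)\equiv A^{kn}C^{k^{2}}B^{k}\,W(2n)\ (\text{mod }p^{l}).
\end{equation*}
So the multiplier is $A^{kn}C^{k^{2}}B^{k}$, which has the same form as $a^{2kn}(a^{r})^{k^{2}}b^{2k}$ in (\ref{6.1}) after renaming $a^{2}\to A$, $a^{r}\to C$ and $b^{2}\to B$. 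Under this renaming, the orders $\varepsilon _{l},\kappa _{l},\lambda _{l}$ of $A,B,C$ play exactly the roles that the orders of $a^{2},b^{2},a^{r}$ played in Theorem 4.3.

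First, following Corollary 4.2, I show that the least period of $(W(2n)\bmod p^{l})$ has the form $r_{l}t$. Here $t\geq 1$ is the least integer with
\begin{equation*}
A^{t}\equiv 1\quad\text{and}\quad C^{t^{2}}B^{t}\equiv 1\ (\text{mod }p^{l}).
\end{equation*}
To see this, put $n=0,1,2$ in (\ref{u}), using that $W(0),W(2)$ are units by the hypotheses of Theorem 7.3 (as in Remark 1.2). This forces any period to be a multiple of $r_{l}$ and forces both congruences above. Second, I find $t$ explicitly. From $A^{t}\equiv 1$ and the fact that the order of $C^{2}=a_{l}^{r_{l}}$ divides that of $A$ (up to the factor $r_{l}$), I deduce $C^{2t^{2}}\equiv1$ and hence $B^{2t}\equiv 1$, so $B^{t}\equiv \pm 1$. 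I then split into the same cases as in Theorem 4.3.

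If $B^{t}\equiv 1$, then $C^{t^{2}}\equiv 1$, so $C^{t}\equiv \pm 1$. The subcase $C^{t}\equiv 1$ gives $t=\operatorname{lcm}[\varepsilon _{l},\kappa _{l},\lambda _{l}]$. The subcase $C^{t}\equiv -1$ forces $t$ even, and by Lemma 4.1 $t$ is an odd multiple of $\lambda _{l}/2$. This gives $t=\frac12\operatorname{lcm}$ exactly under the first $\mu_l=-1$ condition.

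If $B^{t}\equiv -1$, then $t$ is odd and $C^{t}\equiv -1$. Lemma 4.1 then gives $\kappa_l,\lambda_l$ both even with $\operatorname{ord}_2(\kappa _{l})=\operatorname{ord}_2(\lambda _{l})=1$ and $\varepsilon_l$ odd, so $t=\frac12\operatorname{lcm}$ again. Conversely, in each case I check that the proposed $s$ satisfies both congruences, hence $t\mid s$ and $s=t$. Part (ii) is identical with $c_{l},d_{l},r_{l}^{\prime}$ and (\ref{v}).

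I expect the main obstacle to be that Lemma 4.1 is stated modulo an odd prime $p$ rather than modulo $p^{l}$. I would first note that $\mathbb{Z}_{p^{l}}^{\ast }$ is cyclic for odd $p$, so $-1$ is its unique element of order $2$. The lemma's proof then goes through verbatim: $d^{x}\equiv -1$ iff $\delta$ is even and $x$ is an odd multiple of $\delta/2$. A second point to watch is the compatibility relation between $A$ and $C$ (namely $C^{2}=A^{r_{l}}$). I rely on it when reducing $A^{t}\equiv 1$ to $C^{2t^{2}}\equiv1$, and I must be careful that $r_{l}t$ need not be a multiple of the order of $C$ itself.
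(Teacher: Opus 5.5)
You follow the same route as the paper, which only says the proof is ``similar to the proof of Theorem 4.3''. With $A=a_l$, $B=b_l$, $C=(a_l^{r_l})^{1/2}$ and $C^2=A^{r_l}$, the multiplier in (\ref{u}) has exactly the shape of (\ref{6.1}). Your case analysis for the least $t$ with $A^t\equiv 1$ and $C^{t^2}B^t\equiv 1$ is sound, provided you make two steps explicit:
\begin{itemize}
\item $C^t\equiv\pm1$ comes from $C^{2t}=(A^t)^{r_l}\equiv 1$, not from $C^{t^2}\equiv 1$ alone.
\item $\lambda_l\mid 2\varepsilon_l$ is what makes $\varepsilon_l$ even in the subcase $B^t\equiv 1$, $C^t\equiv -1$.
\end{itemize}
Your remark that Lemma 4.1 holds verbatim in the cyclic group $\mathbb{Z}_{p^l}^{\ast}$ is correct, and it fills a point the paper leaves implicit.

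The weak point is identifying the least period with $r_l t$, and there are two problems.

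First, substituting $n=0,1,2$ into (\ref{u}) cannot show that every period is a multiple of $r_l$. The congruence (\ref{u}) only compares $W(2m)$ with $W(2m')$ for $m\equiv m'\pmod{r_l}$, so it says nothing about a shift $\pi$ with $r_l\nmid\pi$. This needs a separate argument, for instance via (\ref{3.4}). Since $W(2n+2)W(2n-2)/W(2n)^2=\Psi_{n+1}(\mathbf{P})\Psi_{n-1}(\mathbf{P})/\Psi_n(\mathbf{P})^2=x(P_1)-X_n$, a period $\pi$ preserves $x([n]P_1+P_2)$ modulo $p^l$ wherever $W(2n)$ is a unit. From that you must then deduce $[\pi]P_1\equiv\mathcal{O}$. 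The paper's Corollary 4.2 skips this step in the same way.

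Second, the hypotheses of Theorem 7.3 give only $\mathrm{ord}_p\geq 0$. They do not make $W(0)=\beta$ and $W(2)=\alpha\beta$ units, and you need units to cancel. For a counterexample, replace a $p$-integral sequence $W$ by $p^{l}W$. This preserves the recurrence and all ratios of terms, hence the associated curve, the points and every hypothesis. Yet $(W(2n)\bmod p^l)$ becomes identically zero, with least period $1$, whereas the claimed period is a multiple of $r_l>3$. So you must either add the assumption that $\alpha$ and $\beta$ are $p$-adic units, or read the conclusion only as ``$r_lt_l$ is a period''. For the weaker reading, your converse check that $2^{\mu_l}\mathrm{lcm}[\varepsilon_l,\kappa_l,\lambda_l]$ satisfies both congruences already suffices.
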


\textbf{Acknowledgement.} This work was supported by the Scientific and
Technological Research Council of Turkey (TUBITAK project no: 118F322).

\end{document}